\newcommand{\detail}[1]{ \par \smallskip \noindent {\bf xxx \newline [Detail\ }{#1}
\hfill{\bf ]\newline xxx}\par \smallskip \noindent \hspace{-4pt}}
\renewcommand{\detail}[1]{}
\newcommand{\IP}{\mathbb P}
\newcommand{\IE}{\mathbb E}
\newcommand{\IR}{\mathbb R}
\newcommand{\IZ}{\mathbb Z}
\newcommand{\IN}{\mathbb N}
\newcommand{\SG}E
\newcommand{\beq}{\begin{equation}}
\newcommand{\eeq}{\end{equation}}
\newcommand{\beqn}{\begin{eqnarray}}
\newcommand{\eeqn}{\end{eqnarray}}
\newcommand{\bc}{\begin{center}}
\newcommand{\ec}{\end{center}}
\newcommand{\bi}{\begin{itemize}}
\newcommand{\ei}{\end{itemize}}
\newcommand*{\abs}[1]{\left\vert #1 \right\vert} 
\newcommand*{\norm}[1]{\left\Vert #1 \right\Vert} 
\newcommand*{\set}[1]{\left \{ #1 \right \} } 
\newcommand*{\Mid}{\: \middle | \:} 
\newcommand*{\diff}{\textup d} 
\newcommand*{\F}{\mathcal F}
\newcommand*{\Ps}{\mathcal P}
\newcommand*{\defeq}{\mathrel{\mathop{:}}=} 
\newcommand*{\N}{\mathbb N} 
\newcommand*{\Z}{\mathbb Z} 
\newcommand*{\R}{\mathbb R} 
\newcommand*{\limty}[1][n]{\lim_{#1 \rightarrow \infty}}
\newcommand*{\konv}[1][n]{\overset{#1 \rightarrow \infty}{\longrightarrow}} 
\newcommand*{\konvD}{\Rightarrow} 
\renewcommand*{\epsilon}{\varepsilon}
\newtheorem{thm}{Theorem}[section]
\newtheorem{propn}[thm]{Proposition}
\newtheorem{lemma}[thm]{Lemma}
\newtheorem{defn}[thm]{Definition}
\newtheorem{cor}[thm]{Corollary}
\journal{-}
\begin{document}

\begin{frontmatter}



\title{On spatial coalescents with multiple mergers in two dimensions}


\author{Benjamin Heuer and Anja Sturm\fnref{corresponding}}
 \fntext[corresponding]{Corresponding author: Email asturm@math.uni-goettingen.de}

\address{Institute for Mathematical Stochastics\\
Georg-August-Universit\"at G\"ottingen \\
Goldschmidtstr. 7\\
37077 G\"ottingen, Germany}

\begin{abstract}
We consider the genealogy of a sample of individuals taken  from a spatially structured population when the variance of the offspring distribution is relatively large. The space is structured into discrete sites of a graph $G.$ If the population size at each site is large, spatial coalescents with multiple mergers, so called spatial $\Lambda$-coalescents, for which ancestral lines migrate in space and coalesce according to some $\Lambda$-coalescent mechanism, are shown to be appropriate approximations to the genealogy of a sample of individuals.

We then consider as the graph $G$ the two dimensional torus with side length $2L+1$ and show that as $L$ tends to infinity, and time is rescaled appropriately,  the partition structure of spatial $\Lambda$-coalescents of individuals sampled far enough apart converges to the partition structure of a non-spatial Kingman coalescent. From a biological point of view this means that in certain circumstances both the spatial structure as well as larger variances of the underlying offspring distribution  are harder to detect from the sample. However, supplemental simulations show that for moderately large $L$ the different structure is still evident. 

\end{abstract}

\begin{keyword}
spatial Cannings model, coalescent, $\Lambda$-coalescent, spatial coalescent, two dimensional torus, limit theorems

\MSC[2010] 60J25 
\sep 60K35 
\sep 92D25 
\sep 92D10 
\end{keyword}

\end{frontmatter}



\section{Introduction}
\label{introduction}

\noindent
The \emph{goal} of this article is to study the genealogies of a sample of individuals from a spatially structured population when the variance in the number of each individual's offspring is relatively large. Larger variances in the offspring distribution are thought to arise, for example,  due to particular reproduction mechanisms of various species   leading to the existence of few individuals with many offspring (Eldon and Wakeley, 2006)\nocite{EW06}, and also due to recurring selective sweeps (Durrett and Schweinsberg, 2004, 2005) \nocite{DS04, DS05}.

The space is structured into discrete sites of a graph $G$ with a colony of a fixed number of individuals at each site in $G$ as well as migration between sites. As the underlying population models we introduce spatial Cannings models, which are extensions of the stepping stone model 
with general Cannings type offspring distributions. 

The genealogies are modeled by coalescent processes that code for the ancestral lines of the individuals in a sample from the present day population backwards in time. Coalescence -referring to a merger of ancestral lines- occurs when a common ancestor of various individuals is reached. We consider coalescent models that are appropriate if the population at each site of $G$ is large. Our special focus in the analysis  lies then on models where the number of sites $|G|$ in the graph is finite but also large, more precisely, we choose $G$ to be a large two dimensional torus.

The special case when one considers only one site (the non-spatial situation with $|G|=1$) leads to the classical \emph{Kingman coalescent} with only binary mergers provided that the variance of the offspring distribution stays bounded in some sense as the population size $N$ 
tends to infinity. This coalescent process has been well studied since its introduction by Kingman (1982a; 1982b)\nocite{jK82a, jK82b}, see for example Wakeley (2009) \nocite{jW09} for an overview. 

Genealogies for populations with a larger variance in the number of offspring have been studied by Mathematicians and Biologists in the more recent past. 
In this case, the genealogy is described by the coalescent with multiple mergers,  the so called \emph{$\Lambda$-coalescent}, which was independently introduced by Pitman (1999) \nocite{jP99} and Sagitov (1999)\nocite{sS99}. (More generally, one can even consider coalescents with simultaneous multiple mergers, the so called $\Xi$-coalescents, but we will focus here on the subclass of $\Lambda$-coalescents.) Here, $\Lambda$ is a finite measure on $[0,1].$ When there are currently $b$ distinct ancestral lines then any collection of $k$ ancestral lines coalesces and thus merges into one new ancestral line at rate 
\begin{equation}
\label{Edeflambk}
\lambda_{b,k}:=\int_{[0,1]} z^{k-2}(1-z)^{b-k} \Lambda(dz)
\end{equation}
with $2\leq k\leq b$, $k,b \in \IN.$ We formally extend this definition by setting $\lambda_{b,k}=0$ for $b=1$ or $b=0$, $k\in \IN.$
The  Kingman coalescent, that is appropriate for populations in which the offspring variance is not so large, corresponds to the case $\Lambda=\delta_0,$ the delta measure at $0.$  In this case $\lambda_{b,k}$ is only nontrivial if $k=2$ and we obtain $\lambda_{b,2}=1$ so that the total coalescence rate is $\binom{b}{2}.$ This means that only binary mergers of pairs of ancestral lines are possible and happen at rate $1$ per pair of ancestral lines.
 
%
%
In order to extend this to a \emph{spatial or structured coalescent}  on a graph $G$ we imagine that ancestral lines  migrate independently from site $x$ to site $y$ in $G$ with rates $p(x,y)$ but coalesce independently at each site according to the rates given in (\ref{Edeflambk}). The formal definition of this \emph{spatial $\Lambda$-coalescent} can be found in Section \ref{section:spatialLambdadef} and in particular in Definition \ref{defn:spatialLambda}.  In the case of Kingman coalescence at each site the resulting process is often simply called the structured coalescent, which was derived rigorously from a forward population model by Herbots (1997) \nocite{hH97} but already studied earlier by Notohara (1990) \nocite{mN90} and since then by many others, see for example Donnelly and Kurtz (1999) or Greven et al. (2005)\nocite{DK99, GLW05}. The case when coalescence at each site takes place according to a $\Lambda$-coalescent, was included in a setting considered by Donnelly and Kurtz (1999)\nocite{DK99}, and was studied in more detail by Limic and Sturm (2006)\nocite{LS06}.

%
%
After introducing the spatial Cannings model in Section \ref{section:SpatialCannings-SpatialLambda}  
we show that spatial $\Lambda$-coalescents arise when one considers the genealogy of individuals sampled in those spatial Cannings models, provided that the number of individuals at each site is large, see  Proposition \ref{thm:conv_to_spatialLambda} for a precise formulation of the result. This result is a rather straightforward generalisation of  the work of Herbots (1997) \nocite{hH97} on the derivation of the structured coalescent combined with the work of M\"ohle and Sagitov (2001) \nocite{MS01} on the derivation of $\Lambda$-coalescents from non-spatial Cannings models, which is included here for completeness.

%
%
In Section \ref{section:spatialLambdaontorus} we consider the spatial $\Lambda$-coalescent on  the two dimensional torus $T^L=[-L,L]^2 \cap \IZ^2$  with side length $2L+1.$ In our main result, Theorem \ref{thm:torusconv}, we show that as $L$ tends to infinity, and time is rescaled appropriately,  the partition structure of spatial $\Lambda$-coalescents of individuals sampled far enough apart converges to the partition structure of a non-spatial Kingman coalescent. This extends in particular the work of Limic and Sturm (2006)\nocite{LS06}, which dealt with the case $d \geq 3.$ The two dimensional case considered here  is biologically the most relevant. It also needs to be treated differently mathematically.

Since the information in a genetic sample only depends on the partition structure of the genealogy of the sampled individuals as well as on some (independent) mutation process along the ancestral lineages,  the distribution of quantities that can be read off a sample of genes, such as allele frequencies,  will be the same if the partition structures of the genealogies are the same. (Note however that the time rescaling that is used in Theorem \ref{thm:torusconv} depends explicitly on $L.$)

From a biological point of view, Theorem \ref{thm:torusconv}  thus implies  that spatial structure as well as larger variances of the underlying offspring distribution are harder to detect from a sparse sample on a large space. However, we also conduct simulations that show that for moderately large $L$ the different structure is still evident in the frequency spectrum of a sample. 
We postpone the detailed description of relations of our main limit result  to the existing literature as well as further discussion of the analytical as well as simulation results to 
Section \ref{section:discussion}.

\section{The spatial $\Lambda$-coalescent}
\label{section:spatialLambdadef}
%
%
In this section, we rigorously define the spatial $\Lambda$-coalescent. We start with introducing some necessary notation.
First, in order to not only describe the number of ancestors of a sample of size $n$ at a given time, but the structure of the genealogy, it is convenient to consider the coalescent as partition valued. We label the sampled individuals from $1$ to $n$ and want to describe the corresponding $n$-coalescent, first in a non-spatial setting.

Here, we consider as the state space  ${\cal P}_n,$  the partitions of $[n]:=\{1,\dots, n\}.$ We call the elements of a partition $\pi \in  {\cal P}_n$ blocks and may represent 
$\pi$ uniquely by
\begin{equation}
\label{blockrep}
{\pi}:=(B_1, B_2, \dotsc),
\end{equation}
where $B_l \subset [n]$ with $\sum B_l = [n]$ are the blocks indexed by the order of their minimal element (with the convention that $\min \emptyset = \infty$).
Each block represents the common ancestor of the individuals contained in this block. Initially, the process is generally started with all singletons, so in the state $(\{1\},\dots,\{n\}, \emptyset, \dots ).$ At each coalescence event an appropriate collection of blocks  is merged into one new larger block.  

$\Lambda$-coalescents can more generally be defined on the state space ${\cal P},$ the partitions of all of $\IN$ with representations as in (\ref{blockrep}). Due to the exchangeability of all blocks an $n$-coalescent is obtained from this coalescent started with infinitely many individuals if one restricts the attention to the partition structure induced on a subset of $\IN$ of size $n.$

%
%
For describing the spatial distribution of ancestral lineages of samples of size $n \in \IN$ we consider as the state space for the spatial $\Lambda$-coalescent labeled partitions ${\cal P}^{\ell}_n$ of  $[n].$ This is the set of partitions of $[n],$ for which each block
carries a label in $G$ specifying its location. 
To make the notation more precise, we will represent an element 
$\pi^{\ell} \in {\cal P}^{\ell}_n$ uniquely by 
\begin{equation}
\label{labeledblocks}
{\pi}^{\ell}:=((B_1,\zeta_1), (B_2,\zeta_2), \dotsc),
\end{equation}
where the sequence of blocks is ordered as before and
$\zeta_l \in G, l \in \IN$ are the labels of the ordered blocks. Set $\zeta_l=\partial \not\in G$ if
$B_{l}=\emptyset$.

If we are interested in the genealogy of samples of infinite size we consider labeled partitions ${\cal P}^{\ell}$ of $\IN.$ Elements of ${\cal P}^{\ell}$ are also uniquely described
by (\ref{labeledblocks}) with $B_l \subset \IN$, where $\sum B_l = \IN$ and $\zeta_l \in G$.
For ${\pi}^{\ell}$ in ${\cal P}^{\ell}_n$ or ${\cal P}^{\ell}$ of the form (\ref{labeledblocks}) we will write 
$\pi$ for the corresponding unlabeled partition $\pi:=(B_1, B_2, \dots)$ in 
${\cal P}_n$ or ${\cal P}$ respectively. We will denote by $\#\pi=\#\pi^{\ell}$ the number of (nonempty) blocks in 
the partition, which may be finite or infinite. Also write $\#\pi^{\ell}_x$ for the number of blocks with label $x \in G$.

For any element $\pi^{\ell} \in
{\cal P}^{\ell}_n$ or ${\cal P}^{\ell}$ with $n\geq m$ 
define $\pi^{\ell}|_m \in {\cal P}^{\ell}_m$ as the labeled partition
induced by $\pi^{\ell}$ on ${\cal P}^{\ell}_m$.
We equip ${\cal P}^{\ell}_n$ with the
metric
\begin{equation}
\label{E_nmetric}
d_n(\pi^{\ell,1},\pi^{\ell,2})= \sup_{m \in [n]} 2^{-m}1_{\{\pi^{\ell,1}|_m \neq \pi^{\ell,2}|_m\}},
\end{equation}
and likewise ${\cal P}^{\ell}$ with the analogous metric $d,$ where we replace $[n]$ with $\N.$
It follows that $({\cal P}^{\ell}_n,d_n)$ and $({\cal P}^{\ell},d)$ are \emph{both} 
compact Polish spaces (for finite $G$), and that 
$d(\pi^{\ell,1},\pi^{\ell,2}) = \sup_n d_n(\pi^{\ell,1}|_n,\pi^{\ell,2}|_n)$.
Analogously, we defined a metric on the unlabeled partition spaces ${\cal P}_n$ and ${\cal P}$ 
by omitting all the superscripts ${\ell}$ in the above.

%
%
We are now ready to  rigorously define the spatial $\Lambda$-coalescent. Let $D(\IR_+, E)$ be the space of right continuous functions from $\IR_+$ into a metric space $E$ that also have left limits. We equip the space $D(\IR_+, E)$ with the Skorohod topology, see for example Chapter 3 of Ethier and Kurtz (1986) \nocite{EK} for details.
\begin{defn}
\label{defn:spatialLambda}
The spatial $\Lambda$-coalescent $\Pi^{\ell}$ with parameter $\Lambda= (\Lambda_{x})_{x \in G}$
where $\Lambda_x$ are finite measures on $[0,1]$ is a $D(\IR_+, {\cal P}^{\ell})$-valued process
with the following dynamics:
\begin{itemize}
\item[(i)] Blocks with the same label $x$ coalesce to create a new block with that same label according to the (non-spatial) $\Lambda_x$-coalescent independently from blocks at other sites, meaning that coalescence happens with the rates given in (\ref{Edeflambk}) for $\Lambda=\Lambda_x.$
\item[(ii)] The label of each block performs an independent random walk  on $G$
with the migration rate from site $x$ to site $y$ given by  $p(x,y), x,y \in G$.
\end{itemize}
\end{defn}
It is shown in Limic and Sturm (2006) \nocite{LS06} Theorem 1 that the spatial $\Lambda$-coalescent is a well-defined strong Markov process for any initial condition in ${\cal P}^{\ell}$ in the case that $\Lambda_x$
are the same measures for all $x \in G$ and $|G|< \infty.$ However, the construction can easily be extended  to this more general setting and also to $|G|$ countably infinite
(some care must be taken here for starting configuration with infinitely many blocks, but we will not need this here).
As mentioned earlier, the special case for which $\Lambda_x$ is given by $\delta_0$ up to a constant for all $x \in G$ is called the structured coalescent (or spatial
Kingman coalescent).

\section{Spatial Cannings models and derivation of spatial $\Lambda$ coalescents}
\label{section:SpatialCannings-SpatialLambda}
In this section we want to present a class of simple spatial population model, which we will refer to as \emph{spatial Cannings models.} We will show that the spatial genealogy of a sample of individuals from various sites is described by the spatial $\Lambda$-coalescent of Definition \ref{defn:spatialLambda} in the large population limit. Here, the number $|G|$ of sites in the graph may be finite or countably infinite. The convergence result combines work by Herbots (1997)\nocite{hH97}, who considered convergence to the structured coalescent, with results on convergence of genealogies to $\Lambda$-coalescents in the non-spatial setting from M\"ohle and Sagitov (2001)\nocite{MS01}. 

%
%
In the spatial Cannings model  there are
$N_x:=r_x N \in \IN$ individuals at  site $x \in G$ at any time, where $r_x$ is a constant. 
These populations reproduce within their own site at discrete times $s \in \IN_0:=\{0,1,2,\dots\}$ and immediately after birth disperse their offspring to other sites.  
Both the reproduction as well as the dispersal or migration mechanism are chosen such that they leave the population size 
at any site fixed. The offspring law at site $x \in G$ is described by 
\begin{equation}
\label{offspring}
\nu^x=(\nu_1^x,\nu_2^x,\dots, \nu_{N_x}^x),
\end{equation}
where $\nu_l^x$ is the number of offspring of individual $l$ in the previous generation (all at site $x$). 
In order to keep the population sizes constant  over time we assume 
that $\sum_{l=1}^{N_x} \nu_l^x=N_x.$ 

%
%
For the offspring distributions $\nu^x$ we will furthermore assume that they are exchangeable such  that for any permutation $\sigma$ of the $N_x$ indices we have
\begin{equation}
(\nu_1^x,\nu_2^x,\dots, \nu_{N_x}^x) \overset{{\cal D}}{=} (\nu_{\sigma(1)}^x,\nu_{\sigma(2)}^x,\dots, \nu_{\sigma(N_x)}^x).
\end{equation} 
Cannings (1974)\nocite{cC74} first considered non-spatial versions of this model with offspring distributions satisfying these properties.

Due to the exchangeability and the criticality of the reproduction we have $\IE(\nu_l^x) = 1.$ 
The probability that any two individuals at site $x$ have a common ancestor in the previous generation is given by  
\begin{equation}
\label{c_Ndef}
c^N_{x} := \sum_{l=1}^{N_x} \IE\left( \frac{\nu_l^x (\nu_l^x-1)}{N_x (N_x-1)} \right)
= \frac{\IE((\nu_1^x)_{2})}{N_x-1}= \frac{Var(\nu_1^x)}{N_x-1},
\end{equation}
where we are  denoting $(m)_k:=\frac{m!}{(m-k)!}.$ 
%
%
After reproduction a fixed number of offspring $n_{xy}$ selected at random without replacement from site $x$ migrate to site $y$ for all $x,y \in G.$ Thus, for each 
$x \in G,$ we need that $\sum_{y \in G}  n_{xy}\leq N_x.$ In order to  keep the population sizes at all sites fixed even after the dispersal due to migration we also require \emph{balancing migration}, meaning that 
\begin{equation}
\label{balance}
\sum_{y \neq x} n_{xy} = \sum_{y \neq x} n_{yx}.
\end{equation}
Taking reproduction and migration together specifies the spatial Cannings model fully. We note that the special case considered by Herbots (1997)\nocite{hH97} corresponds to Wright-Fisher reproduction in all colonies (which are taken to be of the same size $N$) such that $\nu_x$ of (\ref{offspring}) is given by a symmetric multinomial distribution for all $x.$ 

In order to study the associated genealogies of a sample we still need the following notation.
After reproduction and migration, the proportion of individuals in site $x$ who were born in site $y$ is given by 
\begin{equation}
p_{xy} = \frac{n_{yx}}{N_x} = \frac{N_y q_{yx}}{N_x} = \frac{r_y}{r_x} q_{yx}, 
\end{equation}
where $q_{yx}$ is the proportion of individuals in site $y$ leaving for site $x.$
We also set
$p_{x} = \sum_{y \neq x } p_{xy},$
which is the proportion of individuals at site $x$ that migrated there.

%
%
Now let the \emph{spatial Cannings-coalescent} $\Pi^{\ell, N} = (\Pi^{\ell, N}_{s})_{s \in \IN_0}$ be derived from a spatial Cannings model such that this ${\cal P}^{\ell}_n$ valued process is obtain from sampling $n$ individuals at the present, whose location is represented by $\pi^{\ell} \in {\cal P}^{\ell}_n,$ and then following their genealogy into the past.

In order to obtain a spatial $\Lambda$-coalescent in continuous time from Definition \ref{defn:spatialLambda} in the large population limit as $N \rightarrow \infty$ we 
 will rescale time for this process as a function of $N$ and let $N \rightarrow \infty.$ Set
\begin{equation}
t^N = \frac{t}{c^N}, 
\end{equation} 
where $c^N$ is related to $c^N_{x}$ and specified later on. Naturally, if $c^N_{x}$ are independent of $x \in G$ we will set $c^N=c^N_{x}.$
This would in particular be the case in a spatially homogeneous situation in which all colonies are of the same size and display the same reproductive behavior, 
meaning that $r_x=1$ so that $N_x=N$ and $\nu^N=\nu^{x,N}$ for all sites $x \in G.$

Further, we will need an asymptotic moment condition as in M\"ohle and Sagitov (2001)\nocite{MS01} stating that
\begin{equation}
\label{limmoments}
\phi_{j}^{x}(k_1,\dots,k_j):=\lim_{N \rightarrow \infty} (N^{k_1 + \cdots + k_j-j}c^N_{x} )^{-1}
\IE( (\nu_1^{x,N})_{k_1} \cdots (\nu_j^{x,N})_{k_j}) 
\end{equation}
exist  for all $x \in G$, $j  \in \IN$ and $k_1, \dots, k_j \geq 2.$  
%
%
We are now ready to state a convergence in distribution result for the spatial Cannings-coalescent.
\begin{propn}
\label{thm:conv_to_spatialLambda}
Assume that there exists a sequence $\{c^N\}_{N \in \IN} \subset \IR_+$ with $\lim_{N \rightarrow \infty} c^N =0$ and constants $c_x$ such that
for any $x \in G$ we have  $\lim_{N \rightarrow \infty} \frac{c^N_{x}}{c^N}= c_x$ 
with $\sup_{x \in G} c_x< \infty.$  Also assume that 
 $\lim_{N \rightarrow \infty } \frac{p_{xy}^N}{c^N} = p(x,y)$ where $\sup_{x \in G} \sum_{y \neq x} p(x,y) < \infty.$ Assume further that
$ \phi_{1}^{x}(k)$ exist for all $x \in G$  with $\sup_{x \in G}  \phi_{1}^{x}(k) < \infty$ for all $k \geq 2$ as well as
 \begin{equation}
\label{lambdacond}
\phi_{2}^{x}(2,2)
=\lim_{N \rightarrow \infty}( N^2 c^N_x)^{-1}\IE( (\nu_1^{x,N})_{2} \cdot (\nu_2^{x,N})_{2}) =0
\end{equation}
 for all $x \in G.$ For $n \in \IN$ and $\pi^{\ell} \in
{\cal P}^{\ell}_n$ let $\Pi^{\ell,N}$ be the spatial Cannings-coalescent 
started in $\pi^{\ell}.$ Then we obtain weak convergence
\begin{equation}
(\Pi^{\ell,N}_{ [\frac{t}{c^N}]})_{t \in \IR_+}\Rightarrow (\Pi^{\ell}_t)_{t \in \IR_+} \quad \text{ in } D(\IR_+,{\cal P}^{\ell}_n)
\end{equation}  
as $N \rightarrow \infty,$ where $ \Pi^{\ell}$ is the spatial $\Lambda$-coalescent as in Definition \ref{defn:spatialLambda} also started in $\pi^{\ell}$ with $\Lambda_x$ characterized
by the moments given by 
\begin{equation}
\label{Lambda-phi}
\int_0^1 z^{k-2} \Lambda_x(dz) =c_x \phi_1^x(k), \quad k \geq 2.
\end{equation}
\end{propn}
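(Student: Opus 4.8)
The plan is to establish weak convergence in $D(\IR_+,{\cal P}^{\ell}_n)$ by combining a generator (or, equivalently, discrete transition-rate) convergence argument with a tightness argument, following the strategy of M\"ohle and Sagitov (2001) in the non-spatial setting and Herbots (1997) for the migration component. Since ${\cal P}^{\ell}_n$ is a finite state space, the limiting spatial $\Lambda$-coalescent is a continuous-time Markov jump process whose law is determined by its transition rates. The key is therefore to show that, after rescaling time by $1/c^N$, the one-step transition probabilities of the discrete process $\Pi^{\ell,N}$ produce the correct infinitesimal rates in the limit.

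First I would compute the one-generation transition probabilities of $\Pi^{\ell,N}$, separating the two independent mechanisms: coalescence (looking backward, the event that a specified collection of lineages at a common site shares ancestors in the previous generation) and migration (the event that a lineage currently at site $x$ was born at site $y$). For the coalescence part at a fixed site $x$, the backward probability that a given set of $k$ lineages merges into a single ancestor, while the others remain distinct, is expressed through the joint falling-factorial moments $\IE((\nu_1^{x,N})_{k_1}\cdots(\nu_j^{x,N})_{k_j})$ of the exchangeable offspring law; this is exactly the combinatorial computation underlying (\ref{c_Ndef}) and (\ref{limmoments}). Dividing by $c^N$ and taking $N\to\infty$, the assumption (\ref{lambdacond}) that $\phi_2^x(2,2)=0$ forces the probability of two or more simultaneous distinct mergers to vanish at order $c^N$, so that only a single merger of some block of size $k$ survives in the limit, at rate $c_x\,\phi_1^x(k)$. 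A standard identity (Pitman, M\"ohle--Sagitov) shows that the single-merger rate $c_x\phi_1^x(k)$ for $k$ lineages out of $b$ present coincides with $\lambda_{b,k}=\int_{[0,1]} z^{k-2}(1-z)^{b-k}\Lambda_x(dz)$ precisely when $\Lambda_x$ is the measure whose moments satisfy (\ref{Lambda-phi}); I would verify this by checking the consistency/recursion relations that define the $\lambda_{b,k}$ and appealing to the Hausdorff moment problem to confirm that (\ref{Lambda-phi}) determines a unique finite measure $\Lambda_x$ on $[0,1]$. For the migration part, the backward probability that a lineage at $x$ originated at $y\neq x$ is $p_{xy}^N$, and the hypothesis $\lim_N p_{xy}^N/c^N=p(x,y)$ gives exactly the random-walk migration rate of Definition \ref{defn:spatialLambda}(ii). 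Mixed events (simultaneous migration and coalescence, or migration of two lineages at once) are of order $(c^N)^2$ and hence negligible after rescaling.

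With the rates identified, I would assemble the discrete generator $A^N f(\pi^{\ell}) := (c^N)^{-1}\,\IE[f(\Pi^{\ell,N}_1)-f(\pi^{\ell})\mid \Pi^{\ell,N}_0=\pi^{\ell}]$ for $f$ on the finite space ${\cal P}^{\ell}_n$ and show $A^N f \to A f$ uniformly, where $A$ is the generator of the spatial $\Lambda$-coalescent with the $\Lambda_x$ above and the migration kernel $p(x,y)$. The uniform-in-$x$ boundedness hypotheses ($\sup_x c_x<\infty$, $\sup_x\sum_{y\neq x}p(x,y)<\infty$, $\sup_x\phi_1^x(k)<\infty$) guarantee that the total jump rate out of any state of $n$ lineages is uniformly bounded, which controls the error terms uniformly and also yields tightness. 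Convergence of rescaled Markov chains to a Feller process then follows from the standard semigroup-convergence theorem (Ethier and Kurtz, 1986, Ch.~1 and Ch.~4, Theorem 2.6 and Corollary 8.9), giving weak convergence in $D(\IR_+,{\cal P}^{\ell}_n)$.

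The main obstacle I anticipate is the bookkeeping in the first step: correctly expressing the multi-block backward coalescence probabilities in terms of the joint falling-factorial moments and showing that every configuration other than a single $k$-merger contributes only at order $o(c^N)$. The crucial leverage is condition (\ref{lambdacond}): the vanishing of $\phi_2^x(2,2)$ is exactly what excludes simultaneous mergers at rate $O(c^N)$ and thereby restricts the limit to the $\Lambda$-coalescent class rather than the broader $\Xi$-coalescents. Verifying the moment identity (\ref{Lambda-phi})—i.e. that the collection $\{c_x\phi_1^x(k)\}_k$ really is a moment sequence of a finite measure on $[0,1]$ and that the induced single-merger rates reproduce (\ref{Edeflambk})—is the other delicate point, but it is handled by the algebraic identity already established in the non-spatial case by M\"ohle and Sagitov (2001), applied site-by-site thanks to the independence of coalescence across sites built into Definition \ref{defn:spatialLambda}(i).
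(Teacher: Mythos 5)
Your proposal is correct and its core coincides with the paper's proof: the same separation of the one-generation transition into a migration part (a Herbots-style computation giving rates $p^N_{xy}/c^N \to p(x,y)$) and a reproduction part (M\"ohle--Sagitov falling-factorial moments, with (\ref{lambdacond}) excluding simultaneous mergers and (\ref{Lambda-phi}) identifying $\Lambda_x$ site by site), with mixed and higher-order events absorbed into $o(c^N)$; indeed the paper factors the transition matrix as $P_N = P_N^{(m)}\cdot P_N^{(r)}$ and multiplies out exactly the expansions you describe. Where you diverge is the concluding step. Your uniform generator convergence $A^N f \to A f$ is precisely the paper's matrix estimate $P_N = I + c^N Q + o(c^N)$ (its relation (\ref{asymp})), but you then invoke the Ethier--Kurtz convergence theorem for time-rescaled Markov chains, which yields weak convergence in $D(\IR_+,{\cal P}^{\ell}_n)$ --- tightness included --- in one stroke. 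The paper instead argues in two stages: finite-dimensional distributions via $P_N^{[t/c^N]} \to e^{tQ}$, and relative compactness by hand, verifying Corollary 3.7.4 of Ethier and Kurtz (1986) through an explicit coupling of $\Pi^{\ell,N}$ with an auxiliary chain whose inter-jump times are i.i.d.\ geometric, so that jumps do not cluster on the rescaled time axis. Your route is shorter and avoids the bespoke coupling; the paper's route is more elementary and makes the bound $\|Q\|<\infty$ do visible work. One small caution: both you and the paper say ``${\cal P}^{\ell}_n$ is finite,'' which is literally true only for finite $G$, whereas the proposition allows $|G|$ countably infinite --- in that case compact containment and the Feller property of the limit are not automatic from finiteness, and it is exactly the uniform-in-$x$ hypotheses ($\sup_x c_x<\infty$, $\sup_x\sum_{y\neq x}p(x,y)<\infty$, $\sup_x\phi_1^x(k)<\infty$) that bound the total jump rate uniformly; you do invoke these bounds for this purpose, so your argument covers the general case, but the appeal to finiteness should be replaced by that uniform rate bound.
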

%
%
%
\smallskip
The condition (\ref{lambdacond})
 implies that $\phi_{j}^x\equiv0$ for $j\geq 2$ and all $x \in G$ by monotonicity in $j,$ see (18) of M\"ohle and Sagitov (2001)\nocite{MS01}.
%
%
This also implies immediately that we obtain convergence to the spatial Kingman coalescent if and only if
\begin{equation}
\label{Kingmancond}
\phi_1^{x}(3)=\lim_{N \rightarrow \infty} (N^2 c^N_x)^{-1} E( (\nu_1^{N,x})_3 )=0.
\end{equation}
As mentioned earlier, a special case of Proposition \ref{thm:conv_to_spatialLambda} was proved by Herbots (1997)\nocite{hH97}.   For the Wright-Fisher reproduction she considers we set $c^N=\frac{1}{N}.$  Then under the same assumptions Herbots shows convergence of \emph{the number of blocks}   on the space $D(\IR_+, \IN^{|G|}).$ Since (\ref{Kingmancond}) is satisfied, the limit is the block counting process of the structured coalescent.  Here, we consider convergence of spatial partition valued processes in a more general setting.

Proposition \ref{thm:conv_to_spatialLambda} in the non-spatial situation for which $|G|=1$ was proved by M\"ohle and Sagitov (2001)\nocite{MS01}. They considered convergence of partition valued processes for even more general offspring distributions and proved that the limiting process $\Pi$ is a coalescent with simultaneous multiple collisions, or $\Xi$-coalescent, if and only if (\ref{limmoments}) is satisfied. 
Proposition \ref{thm:conv_to_spatialLambda} can easily be extended to those settings as well but since we will, for ease of notation, only be dealing with spatial $\Lambda$-coalescents later on we restrict out attention to this subclass.

%
%
\section{Behavior of the spatial $\Lambda$-coalescent on a large 2-dimensional torus}
\label{section:spatialLambdaontorus}

From now on let $G=T^L$ and in order to record the dependence on $L$ we will write $\Ps_n^{\ell,L}$ and $\Ps^{\ell,L}$ for the partitions with labels in $T^L.$
We will  for notational simplicity assume that $\Lambda_x=\Lambda$ is constant for all $x \in T^L$ and that $\Lambda([0,1]) > 0$. 
(It really suffices to assume that $\lambda_{2,2}^x$ is bounded above and below by positive constants uniformly in $x \in T^L.$)
In order to specify the migration on the torus we first consider transition rates $\tilde{p} : \IZ^2 \to [0,1]$ and define for all $L \in \IN$ transition rates $p^L$ of a random walk on $T^L$ by
\begin{equation}
p^L(x,y) = \sum_{z \in \{z' \in \Z^2\; | \; z'-y \textup{ mod } 2L+1 = 0\}} \tilde{p}(x,z).
\end{equation}
This means that ancestral lines that "migrate out" on one side of  $T^L$ will "migrate in" again on the other side. 
To simplify the arguments in the following we will assume that the rates $\tilde{p}$ are those of a simple symmetric random walk on $\IZ^2$ that migrates to each of the four nearest neighbour sites with equal rates $\frac{1}{4}.$ However, any spatially homogeneous, symmetric random walk satisfying a suitable moment condition could also be considered (see also the remark at the end of this section).

On $T^L$ we also define the following metric $r_L$ appropriate for the torus, 
	\begin{equation}
	r_L(x,y):= \inf \{\norm{x-z}_2 | z \in \Z^2, y - z \textup{ mod } 2L+1 = 0\},
	\end{equation}
	where $\norm{x}_2:=\sqrt{x_1^2 + x_2^2}$ for $x \in   \Z^2.$ 
Furthermore, we define  for $0 \leq a \leq b$ the set of all labeled partitions with pairwise distances  in $[a,b]$:
	\begin{eqnarray}
	[[a,b]] &\defeq& \{{\pi}^{\ell}=((B_1,\zeta_1), (B_2,\zeta_2), \dotsc)  \in \Ps^{\ell,L}_n |\\
	\nonumber
	&&\quad \quad  r_L\bigl(\zeta_i,\zeta_j\bigr) \in [a,b] \text{ for all } i\neq j \text{ with }  B_i,  B_j \neq \emptyset \}.
	\end{eqnarray}
In order to describe the asymptotic behavior of the spatial $\Lambda$-$n$-coalescent started with $n$ individuals we need some more notation:
	For $\pi \in \Ps_n$ let 
	\begin{equation}
	K^{\pi} = \bigl(K^{\pi}_t \bigr)_{t \in \R_+}
	\end{equation}
	 be the \emph{non-spatial} Kingman coalescent on $[n]$ started in $\pi.$
	We also define the sequence $(s_L)_{L \in \N},$ which  will be used to rescale time, by
	 \begin{equation}\label{s_Ldef}
	 s_L\defeq (2L+1)^2 \log(2L+1).
	 \end{equation}
Our main theorem then states the following.
\begin{thm}
	\label{thm:torusconv}
	Let $n \geq 2$ and let $(a_L)_{L \in \N}$ be a nonnegative sequence with 
	\[\limty[L] L^{-1} \sqrt{\log L} \;a_L = \infty, \quad 
	\quad \limty[L] L^{-1} a_L = 0\] 
	and $a_L \leq \sqrt{2}L$ for all $L \in \N$.
	Also, let  $\pi_0 \in \Ps_n$ and $\pi^{\ell,L} \in \Ps_n^{\ell,L}$ such that 
	\[\pi^{\ell,L} \in [[a_L, \sqrt{2} L]]\]
	and $\pi^L = \pi_0$ for all $L \in \IN$ large enough.
	For  $L \in \N$ let $\Pi^{\ell,L}$ be a  spatial $\Lambda$-$n$-coalescent started in  $\pi^{\ell,L}$.
	Then, we obtain weak convergence in the Skorohod space $\textup{D}(\R_+,\Ps_n)$ for $L \to \infty,$ more precisely
	\begin{equation}
	\label{SpatialLambdaconv}
	\bigl(\Pi^L_{s_L t}\bigr)_{t \in \R_+} \konvD \bigl(K^{\pi_0}_{\boldsymbol{\pi} t}\bigr)_{t \in \R_+}.
	\end{equation}
\end{thm}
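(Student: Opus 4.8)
The plan is to combine a separation-of-time-scales argument with two-dimensional potential theory for the migration walk, reducing the $n$-block problem to the analysis of a single pair. The key structural fact is that the simple symmetric random walk on $T^L$ mixes on the time scale $(2L+1)^2$, which is smaller than the coalescence scale $s_L=(2L+1)^2\log(2L+1)$ by the factor $\log(2L+1)$. Hence, on the scale $s_L$, the block labels repeatedly re-equilibrate to the (nearly uniform) stationary distribution between successive mergers; under that distribution two distinct blocks are at $r_L$-distance of order $L$ with high probability, so the configuration stays in a set $[[\,\cdot\,,\sqrt2 L]]$ with pairwise distances comparable to $L$. This renewal of positions is what collapses the $\Lambda$-mechanism to the Kingman mechanism and makes all pairwise rates equal.

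First I would reduce to binary mergers. Representing $\Pi^L$ through its successive merger times, I would show that on any interval $[0,s_LT]$ the probability that three or more blocks occupy one site simultaneously tends to $0$: a triple coincidence requires an additional independent meeting within the short coalescence window of a pair, which has probability $o(1)$. Thus with probability tending to one every merger in $[0,s_LT]$ is binary, so the limiting partition dynamics are generated by pairwise mergers alone — exactly Kingman's mechanism — and $\Lambda$ enters only through $\lambda_{2,2}=\int_{[0,1]}\Lambda(dz)$, of which (as the remark after the theorem indicates) only positivity is used.

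Second, the analytic core is the pairwise coalescence time. For two blocks the difference of their labels is a rate-two simple random walk $D$ on $T^L$, and coalescence occurs at rate $\lambda_{2,2}>0$ whenever $D=0$. Using a two-dimensional Green's-function estimate I would argue that, once $D$ reaches a neighbourhood of the origin, it returns to $0$ on the order of $\log L$ times before escaping back to the macroscopic scale $L$; since each visit triggers coalescence with a fixed positive probability $\lambda_{2,2}/(\lambda_{2,2}+2)$, coalescence occurs during the first such sojourn with probability tending to one, and hence within $o_P(s_L)$ of the hitting time $\tau_0$ of the origin. The classical potential-kernel expansion $a(x)\sim\frac{2}{\pi}\log\norm{x}_2$, together with the torus identity $\mean_x[\tau_0]=(2L+1)^2\bigl(g(0)-g(x)\bigr)$ and $g(0)-g(x)\sim a(x)$, gives $\mean_x[\tau_0]\sim\frac{1}{\pi}s_L$ for $D$ precisely when $\log\norm{x}_2\sim\log(2L+1)$. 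The hypotheses $L^{-1}\sqrt{\log L}\,a_L\to\infty$ and $a_L\le\sqrt2 L$ are exactly what force $\log a_L\sim\log(2L+1)$, so this holds for the starting separation. A geometric-trials argument over mixing epochs then upgrades this to $\tau_0/s_L\konvD \textup{Exp}(\boldsymbol\pi)$ with $\boldsymbol\pi=\pi$; i.e.\ on the $s_L$ scale each pair coalesces at asymptotic rate $\pi$.

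Finally I would assemble the $n$-block statement. Because positions re-randomize on the faster scale $(2L+1)^2=s_L/\log(2L+1)$, after each merger the $k$ surviving blocks again lie pairwise at distance of order $L$, so the single-pair estimate applies afresh and their $\binom{k}{2}$ meeting-and-coalescence times are asymptotically independent $\textup{Exp}(\pi)$ variables. This identifies the limiting embedded jump chain and holding times with those of $(K^{\pi_0}_{\pi t})_{t}$, and convergence of the rescaled embedded chain together with a tightness estimate upgrades this to weak convergence in $\textup{D}(\R_+,\Ps_n)$. I expect the principal difficulty to lie in the second step: making the separation-of-scales and the two-dimensional Green's-function estimates quantitative and uniform enough to pin the constant to $\pi$, to show coalescence-given-meeting succeeds with probability $\to1$ so that $\lambda_{2,2}$ disappears, and to control the logarithmic error terms across the whole admissible range of $a_L$ — the recurrence of the planar walk is precisely why these corrections, invisible in the transient case $d\ge3$, must here be handled with care.
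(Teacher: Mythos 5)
Your proposal is correct and takes essentially the same route as the paper: the paper likewise couples the spatial $\Lambda$-coalescent to an instantaneously coalescing random walk, shows via the same geometric-trials decomposition (its Lemma \ref{lemma:meins1}, with success probability $\lambda_{2,2}(2+\lambda_{2,2})^{-1}$ per visit) that the post-meeting coalescence delay is $o_P(s_L)$ so that the $\Lambda$-mechanism drops out, uses the two-dimensional torus hitting-time law with rate $\boldsymbol{\pi}$, the mixing of the walk on scale $(2L+1)^2$, and the exclusion of nearby third blocks to obtain exchangeable, asymptotically independent pair times and hence the Kingman embedded chain, and finishes with finite-dimensional convergence plus a modulus-of-continuity tightness bound. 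The only difference is one of sourcing rather than substance: you propose deriving the hitting-time asymptotics and their uniformity from the potential kernel directly, whereas the paper imports exactly these estimates from Cox (1989).
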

Note that we will generally assume that $L$ is large enough so that $\pi^{\ell,L} \in \Ps_n^{\ell,L}$ corresponding to a given $\pi_0 \in \Ps_n$  can be found.
Observe also that in the limiting non-spatial Kingman coalescent there is an additional time change by the number $\boldsymbol{\pi}.$ The time change and so the entire limit process does  not depend on  the details of the coalescence mechanism of the spatial$\Lambda$-coalescent.

Finally, we remark that the result of Theorem \ref{thm:torusconv} is also expected to hold if the random walk $\tilde{p}$ on $\IZ^2$ is symmetric and spatially homogeneous, and satisfies a suitable moment condition. In this case, the time scaling by $\boldsymbol{\pi}$ in (\ref{SpatialLambdaconv}) would have to be replaced by $\sigma^2 \boldsymbol{\pi}$ with $\sigma^2$ the variance of distance traversed by the random walk in one unit of time.

%
%
\section{Discussion}
\label{section:discussion}

Knowledge of the genealogies together with an independent mutation process, in our context generally a Poisson process along the ancestral lineages, allows to describe the distribution of quantities of interest in population genetics that can be read off the genetic variability in the sample, such as site or allele frequency spectra. 

In the non-spatial situation many results are available for the Kingman coalescent case modeling populations with low offspring variance, a prominent example being the Ewens sampling formula for the allele frequency spectrum, see for example Wakeley (2009) \nocite{jW09}  for an overview.
For non-spatial  $\Lambda$-coalescents modeling populations with larger offspring variances the analysis is more complicated and the available results are not as complete or explicit. However, see for example M\"ohle (2006a; 2006b), Berestycki et al. (2007), Birkner et al. (2011), and Berestycki et al. (unpublished manuscript) \nocite{mM06a,mM06b,BBS07, BBS11, BBL12} for a variety of recent results that show that differences in the underlying reproduction can lead to qualitatively different sampling distributions. 

Likewise, the analysis of genetic variability in the sample becomes much more difficult due to spatial structure. Generally,  the influence of the spatial structure will again lead to qualitatively different sampling distributions. However, this depends on the underlying space and in particular on the distances of the sampled individuals.

Our main results state that  the genealogy of individuals sampled far enough apart on a large two dimensional torus $T^L$  can be approximated by the genealogy of a \emph{non-spatial} Kingman coalescent even when the offspring variances in the underlying population models are larger. On one hand, this means that the results available for the non-spatial Kingman coalescent can be used to approximate the genealogy and the resulting sampling distributions. However, from the point of view of a population geneticist it is a negative result: If one samples individuals relatively far apart then the influence of a large variance in the offspring distribution as well as of spatial structure are harder to detect in the sample.

In order to derive this result we consider spatial $\Lambda$-coalescents on $T^L,$ which model the genealogies of individuals sampled from large populations living and migrating on $T^L$ whose offspring distributions have  potentially larger variances. In Section \ref{section:SpatialCannings-SpatialLambda} we introduced spatial Cannings models as a large class of models that fit into this framework, a statement that is made precise by  Proposition \ref{thm:conv_to_spatialLambda}. 

The main result, Theorem \ref{thm:torusconv}, then states that the (unlabeled) partition structure of the suitably time changed spatial $\Lambda$-$n$-coalescents on $T^L$ converges to that of a non-spatial Kingman coalescent as $L \rightarrow \infty$ provided that the  individuals are sampled far enough apart. 
This kind of behavior arises since in the chosen scaling and due to the sparse sample  it is unlikely that more than two ancestral lines, represented by blocks, ever meet at the same site. Thus, only binary mergers may take place. On the other hand, a meeting of two  ancestral lines is followed up by many more meetings of these two so that they eventually coalesce (regardless of the rate of coalescence) before encountering any other  ancestral lines. The sequence of meetings of two ancestral lines and thus also their coalescence happens instantaneously as $L \rightarrow \infty,$ which implies that for large $L$ the spatial  $\Lambda$-$n$-coalescent behaves like a coalescing random walk (with instantaneous coalescence of lines that have met).
In contrast, the time between encounters and eventual coalescence of pairs of lines is long enough so that all ancestral lines have in the meantime become well mixed on the torus. Hence, any two of them are equally likely to participate in the next meeting, which leads in the limit to the exchangeability property of the non-spatial Kingman coalescent.

%
%

%
%
The result of Theorem \ref{thm:torusconv} is analogous to one obtained by  Limic and Sturm (2006) \nocite{LS06} for the $d$ dimensional torus with  $d\geq 3.$ However, the scaling for $d=2$ is different and more subtle. This is due to the fact that the random walk performed by the ancestral lines is recurrent in $d=2$ while it is transient for $d \geq 3.$ The recurrence in two dimensions also leads to the many encounters and in the limit $L \rightarrow \infty$ to instantaneous coalescence of a pair of lines that have met at the same site. As a consequence the time change of the limiting Kingman coalescent is independent of the $\Lambda$-measure of the underlying coalescent mechanism in $d=2$ unlike in $d=3.$ The prior work by Limic and Sturm (2006) \nocite{LS06} generalised results by Greven et al. (2005) \nocite{GLW05} who considered spatial Kingman coalescents in $d \geq 3$ (for some results in $d=2$ in the analogous setting but with a different focus see also Greven et al. (2012)\nocite{GLW12}).

 %
 %
 The observation that the influence of spatial structure on the genealogy of a sample is not readily detectable in certain situations (except possibly through a space dependent time change) 
 has been made before.  Related results have in particular been obtained in the articles by Cox and Durrett (2002) \nocite{CD02} and Z\"ahle et al. (2005)\nocite{ZCD05}. They study the classical stepping stone model on the torus $T^L.$ This  is a spatial Moran model  and thus a special case of the spatial Cannings model introduced in Section \ref{section:SpatialCannings-SpatialLambda} corresponding to setting $N_x=N$ and choosing $\nu_x^N=\nu^N$ in (\ref{offspring}) as a permutation of $(2,0,1, \dots,1).$ In Cox and Durrett (2002) \nocite{CD02} and Z\"ahle et al. (2005)\nocite{ZCD05} meeting times and coalescence times of ancestral lineages are analysed as $L$ tends to infinity while the populations size $N$ and migration rate depend on $L$ in an appropriate way. In this setting they also prove a result analogous to Theorem \ref{thm:torusconv}, stating that the genealogy of individuals sampled far enough apart can be approximated by a non-spatial Kingman coalescent.

%
%
A related model in continuous space has recently been introduced by Barton et al. (2010)\nocite{BEV10}. Here, reproduction events are determined by Poisson point processes in space and involve individuals in a certain neighbourhood. Rare extinction-recolonisation events lead to a genealogy that is described by a spatial coalescent with a $\Lambda$-coalescent mechanism affecting ancestral lines in the neighbourhood chosen by the Poisson point process. 

These  $\Lambda$-coalescents in continuous space differ from the ones considered here in discrete space. Nevertheless, an analogous result to Theorem \ref{thm:torusconv} is obtained when individuals sampled far enough apart on a two dimensional continuous  torus are considered. As its side length $L$ tends to infinity, the limiting genealogy is described by a non-spatial Kingman coalescent, another type of a spatial $\Lambda$-coalescent or coalescing Brownian motions with non-local coalescence, depending on the scaling of the neighbourhood sizes that affect  multiple merger events.

In this work, the behavior of the spatial $\Lambda$-coalescent when individuals are not sampled far apart has not been considered analytically. It is clear, however, that due to the recurrence of the random walk in $d=2$ coalescence of all ancestral lines would happen instantaneously on the time scale considered in Theorem \ref{thm:torusconv} in the limit as $L \rightarrow \infty.$ 
This is in contrast to results in $d\geq 3$ as in Limic and Sturm (2006)\nocite{LS06},  where there is a nontrivial 
distribution of  ancestral lines (far apart from each other) for all large $L.$ 

For ancestral lines sampled close to each other the behavior of the ancestral lines is often described by distinguishing two phases, the first phase termed the \emph{scattering phase} that (likely) involves initial rapid coalescence and  lasts until ancestral lines are far apart (and well mixed) in space, and the second phase of subsequent slow coalescence termed the \emph{collecting phase}, see Wakeley (2001)\nocite{jW01}. 
Given this terminology, our analytical results are restricted to the collecting phase. For some analytical results on the genealogy for the scattering and collecting phase under various model assumptions, see Z\"ahle et al. (2005)\nocite{ZCD05}, Etheridge and V\'eber (in press 2012)\nocite{EV11}, and Greven et al. (unpublished manuscript)\nocite{GLW12}.

In order  to further examine the asymptotic results for the spatial $\Lambda$-coalescent we performed simulations of the spatial $\Lambda$-coalescent as well as the (non-spatial) Kingman coalescent on a moderately large two dimensional torus. We then compared the mean allele frequency spectrum of both processes which we juxtaposed additionally with its expectation in the Kingman coalescent calculated using Ewens sampling formula.
We also compared the total tree length of both processes via a q-q-plot.

The allele frequency spectrum is computed under the infinite alleles model. In order to test how well the approximation performs for moderately large $L$ mutations, that are assumed to always generate novel alleles, are placed on the genealogical tree of the processes according to a Poisson point process with the rate $\boldsymbol{\pi} \cdot ((2L+1)^2 \log (2L+1))^{-1}$. 
We start with $n$ singletons at time 0. Whenever a block is hit by a mutation we count the number of individuals in the block. Thus, if there are $k$ individuals in a block that is hit by a mutation a $k$-tuple of individuals carrying this mutation is  generated. We then remove the individuals from the system. When all individuals are removed we count the number of $k$-tuples, this number shall be called $a_k.$ The vector $(a_1 , \dotsc, a_n)$ is the desired frequency spectrum. We perform $m$ independent simulations and generate $a_i^j,$ which represent the $i$-th component of the allele frequency spectrum of the $j$-th simulation. The mean frequency spectrum is now given by $m^{-1}\sum_{j=1}^m (a_1^j,\dotsc, a_n^j)$.

\begin{figure}
\begin{minipage}[c]{0.5\textwidth}
\includegraphics[scale=1]{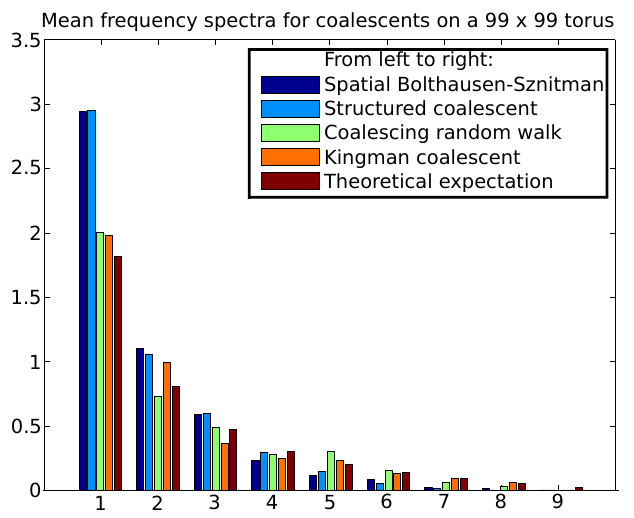} 
	\caption{Mean frequency spectrum for $L' = 99$}
	\label{fig1}
\end{minipage}
\begin{minipage}[c]{0.5\textwidth}
	\includegraphics[scale=1]{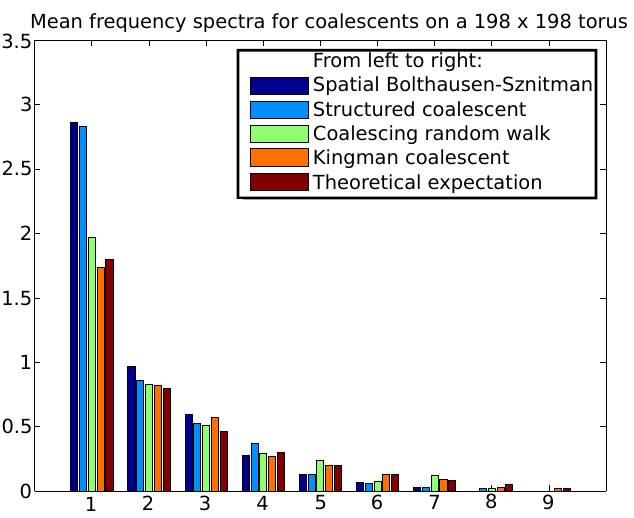} 
	\caption{Mean frequency spectrum for $L' = 198$}
	\label{fig2}
\end{minipage}
\end{figure}

First, we start with $n=9$ individuals sampled far apart on the torus at time $0.$ We arrange them on the torus in a 3 x 3 square such that the distance of next neighbours is a third of the side length $L'=2L+1$.

Regarding the various coalescence behaviors we consider the (instantaneously) coalescing random walk, the spatial Bolthausen-Sznitman coalescent ($\Lambda$ uniform on $[0,1]$) and the structured coalescent (the spatial Kingman coalescent with $\Lambda = \delta_0$). 
For the side lengths $L' = 99$ and $L' = 198$ we perform $m = 100$ independent simulations.

\begin{figure}
\begin{minipage}[c]{0.5\textwidth}
\includegraphics[scale=1]{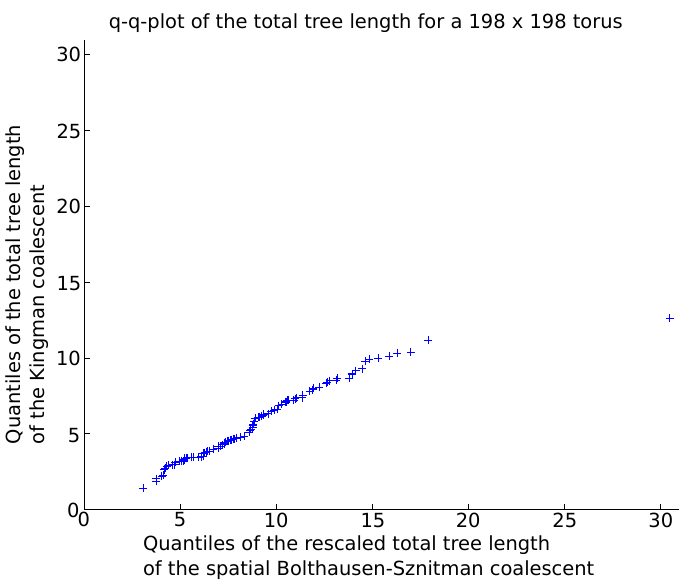} 
	\caption{q-q-plot of the rescaled total tree length of the spatial Bolthausen-Sznitman coalescent versus the total tree length of the Kingman coalescent for $L' = 198$}
	\label{fig3}
\end{minipage}
\begin{minipage}[c]{0.5\textwidth}
\includegraphics[scale=1]{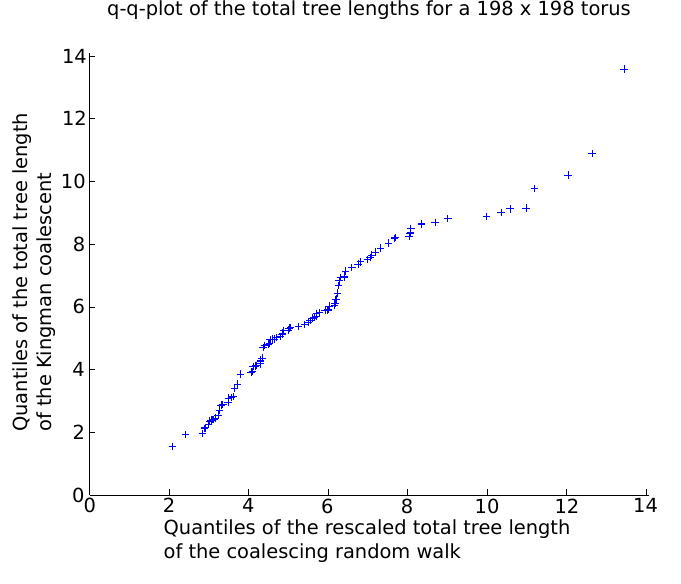} 
	\caption{q-q-plot of the rescaled total tree length of the coalescing random walk versus the total tree length of the Kingman coalescent for $L' = 198$}
	\label{fig4}
\end{minipage}
\end{figure}

Figures \ref{fig1} and \ref{fig2} show that the results for the coalescing random walk are much closer to the limiting non-spatial Kingman coalescent than those for the spatial $\Lambda$-coalescents.  The spatial $\Lambda$-coalescents produce more singletons in the allele frequency spectrum in comparison with the Kingman coalescent. Moreover, they also produce a larger total tree length (compare figures \ref{fig3} and \ref{fig4}). 

Comparing figures \ref{fig1} and \ref{fig2} we note in addition that there is not much of an improvement between the $L' = 99$ and $L' = 198$ case, which is a sign for a slow convergence rate in Theorem \ref{thm:torusconv}.
This can be explained by recalling the proof of the result one more time. We show the convergence result  first for the coalescing random walk.  We then show that for large $L$ the spatial $\Lambda$-coalescent behaves like a coalescing random walk since any pair of ancestral lines that meet in the spatial $\Lambda$-coalescent will with high probability on a large torus meet again and again (before meeting other ancestral lines) until the pair eventually coalesces. This additional time until coalescence vanishes in the limit. 
Nonetheless,  for moderately large $L$ it is not surprising that we see longer coalescence times for the spatial $\Lambda$-coalescents  than for the coalescing random walks and therefore also more singletons in the allele frequency spectrum since singletons have more time to get hit by a mutation before coalescing.  Note that the behavior of the Bolthausen-Sznitman coalescent and the structured coalescent are quite similar since it is mostly $\lambda_{2,2}$ that dictates how much longer it takes the spatial $\Lambda$-coalescent for coalescence, and we have $\lambda_{2,2}=1$ for both of these choices. 

The slow convergence rate may be explained by looking at the expectation of the time that it takes for two blocks to leave the same site and then meet again. This expected time  is of order $L^2$ and after rescaling it is of order $(\log L)^{-1}$. Therefore, these expectations converge only very slowly to zero.

\begin{figure}
\begin{minipage}[c]{0.5\textwidth}
\includegraphics[scale=1]{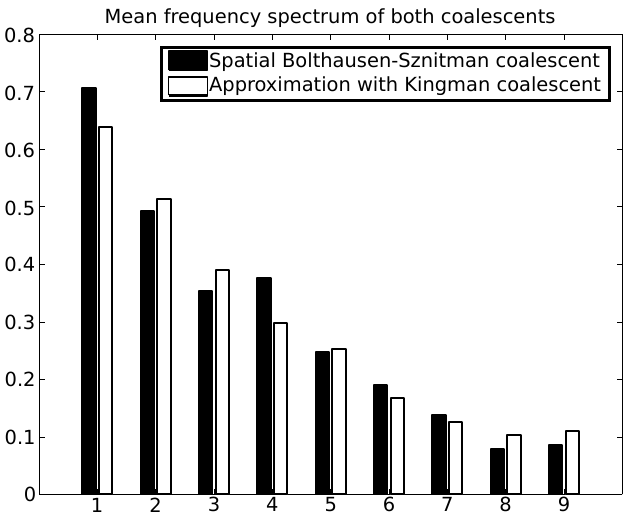} 
	\caption{Mean frequency spectrum of the Bolthausen-Sznitman coalescent and the Approximation with the Kingman coalescent with ancestral lines started close to each other for $L' = 99$}
	\label{fig5}
\end{minipage}
\begin{minipage}[c]{0.5\textwidth}
\includegraphics[scale=1]{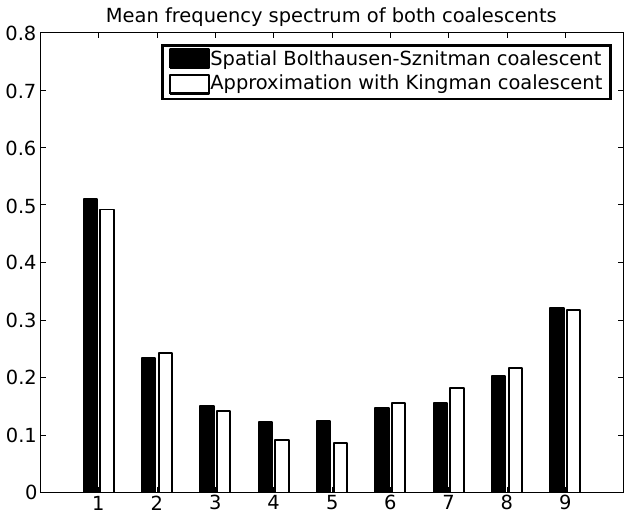} 
	\caption{Mean frequency spectrum of the Bolthausen-Sznitman coalescent and the Approximation with the Kingman coalescent with ancestral lines started at the same position for $L' = 99$}
	\label{fig6}
\end{minipage}
\end{figure}

Next, we consider individuals sampled close to each other on the torus and their behavior under the spatial Bolthausen-Sznitman coalescent.
Once the ancestral lines are mutually far apart again, we also use the approximation by a non-spatial Kingman coalescent. Therefore we get two results for the allele frequency spectrum, one given by the simulation of the spatial process and the other given by simulating until ancestral lines are far apart and then approximating with the Kingman coalescent.

Again we have $n=9$ individuals to start with, the side length of the torus is $L' = 99$ and we repeat the simulation $m=500$ times. We start the approximation when the ancestral lines surpass a mutual distance of $8.33$. We either start the particles in a 3 x 3 scheme where next neighbours have distance 1 or we start them all in the same site.

For both cases we compute the mean frequency spectrum (see figures \ref{fig5} and \ref{fig6}). 
In comparison with the sparse situation (figure \ref{fig1}) we observe that the mean of $a_1$ and $a_2$ decreases and the mean of $a_6, a_7, a_8$ and $a_9$ increases.  This effect is expected and is even stronger for individuals started at the same site.

We observe again that the approximation with the Kingman coalescent gives less weight to singletons but since the number of ancestral lines left in the system when it has become sparse will be very small (possibly 1) the approximation is sometimes not even used so that the results appear to be better than in the case where we already start far apart. Here, the mean number of ancestral lines at the time of approximation was $2.6$ for the close starting configuration and $4.0$ for all ancestral lines started in the same site.

Finally, we note that if ancestral lines end up far apart the simulation of the spatial $\Lambda$-coalescent will take significantly more time than the simulation of the approximation with the non-spatial Kingman coalescent, and thus could be of practical use. Here, the simulation with the approximation was more than 100 times faster than the simulation without the approximation.

%
%
\smallskip

In conclusion, we recall that our theoretical convergence result states that information about spatial structure and the specific coalescence mechanism (due to possibly larger variances in the offspring distribution) is lost from a sparse sample as $L \rightarrow \infty,$ except for a time change that depends on $L.$ However, we have seen from simulations that even if this time change is not taken into account the influence of the spatial structure is still quite evident for moderately large $L.$ In contrast,  the details of the coalescence mechanism -apart from the delay that it takes a pair of lineages to coalesce while at the same site- do not have a significant impact on the sampling distribution for moderately large $L$ in the case of a sparse sample. 

In future theoretical work, it would be interesting to study the rate of convergence, and also whether it can be improved by a finer scaling that takes the coalescence delay into account. Also, theoretical investigation of the scattering phase for non sparse samples as well as the consideration of convergence as $N$ and $L$ tend to infinity jointly would be of interest.

\section{Proofs}
%
%
In this section we prove Theorem \ref{thm:conv_to_spatialLambda} and Theorem \ref{thm:torusconv}.
\noindent
\subsection{Proof of {T}heorem \ref{thm:conv_to_spatialLambda}}
For the proof we fix the following notation. For a vector $a=(a_x)_{x \in G}$ we set $||a||=\sup_{x \in G} |a_x|.$
For $(a^N)_{N \in \IN}$ and  $(c^N)_{N \in \IN}$ sequences of vectors and nonnegative numbers respectively  we say that $a^{N}=O(c^{N})$ if $\sup_{N\in \IN} \frac{||a^{N}||}{  c^N} <\infty,$ and 
 $a^{N}=o(c^{N})$ if $\lim_{N\rightarrow \infty} \frac{||a^{N}||}{  c^N} =0.$ If $A=(a_{ij})_{i,j \in I}$ is a matrix for a countable $I$ then we define the matrix norm by 
\begin{equation}
||A||= \sup_{i \in I} \sum_{j \in I} |a_{ij}|.
\end{equation}
In this proof we only consider labeled partitions and thus omit the superscript ${\ell}$ in the notation whenever possible.
So for $n \leq N$ let $\pi, \tilde{\pi} \in {\cal P}^{\ell}_n$ and let $p^{N}_{\pi \tilde{\pi}}= \IP( \Pi^{\ell,N}_{t+1}=\tilde{\pi}|\Pi^{\ell,N}_t=\pi)$ be the transition probabilities of $ \Pi^{\ell,N}.$ We set $P_N= (p^{N}_{\pi \tilde{\pi}})_{\pi, \tilde{\pi} \in   {\cal P}^{\ell}_n}.$ Also let 
$q_{\pi \tilde{\pi}}$ be the transition rate from $\pi$ to $\tilde{\pi}$ for $\Pi^{\ell}$ if $\pi \neq \tilde{\pi}$ and set $q_{\pi \pi} = -\sum_{\tilde{\pi} \neq \pi} q_{\pi \tilde{\pi}}$ as well as $Q= (q_{\pi \tilde{\pi}})_{\pi, \tilde{\pi} \in   {\cal P}^{\ell}_n}.$
For the proof of the convergence of finite dimensional distributions  it suffices to show that
\begin{equation}
\label{asymp}
P_N = I + c^N Q + o(c^{N}).
\end{equation}
The sufficiency follows since  $||P_N ||= 1$ and also $|| I + c^N Q||=1$ for $N$ large due to the fact that the assumptions of the proposition imply 
that $q_{\pi \tilde{\pi}}$ are uniformly bounded over all $\pi, \tilde{\pi}  \in  {\cal P}^{\ell}_n.$
From this and (\ref{asymp}) we then have
 \begin{equation}
\label{Pconv}
|| P_N^{[\frac{t}{c^N}]} - (I + c^N Q)^{[\frac{t}{c^N}]}|| \leq  \frac{t}{c^N} || P_N - (I + c^N Q)||
= t || \frac{P_N - I}{c^N} - Q || \rightarrow 0
\end{equation}
as $N \rightarrow \infty$ which implies that 
\begin{equation}
\lim_{N \rightarrow \infty} P_N^{[\frac{t}{c^N}]} = \lim_{N \rightarrow \infty}  (I + c^N Q )^{[\frac{t}{c^N}]}
=e^{tQ}.
\end{equation}
This in turn implies convergence of the finite dimensional distributions.

Thus, we will first focus on showing relation (\ref{asymp}). For notational simplicity we will in the following omit a sub- or superscript $N$  whenever this does not affect clarity such as for the quantities $p_{xy}^N$ and $p_x^N$. 
We can write $P_N = P_N^{(m)}  \cdot P_N^{(r)}$ where $P_N^{(m)} =(p^{(N,m)}_{\pi \tilde{\pi}})_{\pi, \tilde{\pi} \in   {\cal P}^{\ell}_n}$ and  $P_N^{(r)}=(p^{(N,r)}_{\pi \tilde{\pi}})_{\pi, \tilde{\pi} \in   {\cal P}^{\ell}_n}$ are the transition probability matrices of the coalescent due to the migration step and reproduction step respectively. We first consider the migration step. The arguments will be similar to those in the proof of Theorem 2.1 in Herbots (1997)\nocite{hH97}.  If $\tilde{\pi} \in A^{\pi}_{xy}$  the set of all partitions that result from $\pi$ by changing one label from $x$ to $y$ with $x \neq y,$ then we have that
\begin{equation}
p^{(N,m)}_{\pi \tilde{\pi}} = \frac{\binom{N_x - \#\pi_x}{p_{xy} N_x -1}}{\binom{N_x}{p_{xy} N_x} } \cdot \frac{\binom{N_x - p_{xy} N_x - \#\pi_x +1}{p_x N_x - p_{xy} N_x}}{\binom{N_x - p_{xy} N_x }{p_x N_x - p_{xy} N_x}} \prod_{z \neq x} 
\frac{ \binom{N_z - \# \pi_z}{p_z N_z}}{\binom{N_k}{p_z N_K}}.
\end{equation}
Here, the first factor is the probability of choosing one particular block (but no other blocks) as migrants from site $y$ to site $x$ forward in time. The second factor represents the probability that
all other migrants from other sites to $x$ are drawn from outside the sample. Lastly, the product
represents the probability that no migrants at sites other than $x$ are in the sample. We can simplify
this expression to 
\begin{equation}
\label{p_pi_tildepi}
p^{(N,m)}_{\pi \tilde{\pi}} = p_{xy} \frac{N_x}{N_x - p_x N_x - \#\pi_x + 1}
\prod_{z \in G} \prod_{a=0}^{\#\pi_z-1} \frac{N_z - p_z N_z - a}{N_z-a}.
\end{equation}
We also have that 
\begin{equation}
\label{p_rest}
\sum_{\tilde{\pi} \notin \bigcup_{x\neq y} A^{\pi}_{xy}\cup\{\pi\}}  p^{(N,m)}_{\pi \tilde{\pi}}\leq R^{(N,m)}(\pi),
\end{equation}
where $R^{(N,m)}(\pi)$ is the probability that at least two of the migrants are drawn from the sample. As in 
Herbots (1997)\nocite{hH97} we can bound 
\begin{equation}
\label{p_restbound}
R^{(N,m)}(\pi) \leq \left(\sum_{z \in G} \#\pi_{z} p_z\right)^2\leq n^2 \sup_{z \in G} p_z^2=O((c^N)^2)
\end{equation}
by assumption. Taking (\ref{p_pi_tildepi}) to (\ref{p_restbound}) together now implies that
\begin{equation}
\label{asymp_m}
P_N^{(m)} = I + c^N Q^{(m)}_N + R_N^{(m)},
\end{equation}
where the entries of $Q^{(m)}_N=(q^{(N, m)}_{\pi \tilde{\pi}})_{\pi, \tilde{\pi} \in   {\cal P}^{\ell}_n}$
are given by $q^{(N,m)}_{\pi \tilde{\pi}}=\frac{1}{c^N} p^{(N,m)}_{\pi \tilde{\pi}}$ if $\tilde{\pi} \in A^{\pi}_{xy}$ for some $x \neq y ,$ by $q^{(N,m)}_{\pi \pi}=-\sum_{x \neq y} \sum_{\tilde{\pi} \in A^{\pi}_{xy}} \frac{1}{c^N} p^{(N,m)}_{\pi \tilde{\pi}}$ and are zero otherwise, and  $R_N^{(m)}$ is the matrix containing the rest terms. We have due to (\ref{p_rest}) and (\ref{p_restbound}) that 
 $||R_N^{(m)}||= O((c^{N})^2).$ Before turning to transition probabilities due to reproduction let us observe that 
\begin{equation}
\label{qmconv}
\lim_{N \rightarrow \infty} q^{(N,m)}_{\pi \tilde{\pi} }=q^{(m)}_{\pi \tilde{\pi} },
\end{equation}
where  $q^{(m)}_{\pi \tilde{\pi} }=p(x,y)$ if $ \tilde{\pi} \in  A^{\pi}_{xy}$ and is zero for all other $\tilde{\pi} \neq \pi.$ 
This convergence follows directly from (\ref{p_pi_tildepi}) since the term multiplying $p_{xy}$ in (\ref{p_pi_tildepi}) is a finite product whose individual factors converge to $1$ due to the fact that $N_z \rightarrow \infty$ and $p_z = O(c^N)\rightarrow 0$ by assumption for each $z \in G.$ From (\ref{qmconv}) it then follows that $q^{(m)}_{\pi \pi }= -\sum_{\tilde{\pi} \neq \pi} q^{(m)}_{\pi \tilde{\pi} }$ so that $Q^{(m)}_N=(q^{( m)}_{\pi \tilde{\pi}})_{\pi, \tilde{\pi} \in   {\cal P}^{\ell}_n}$  is the matrix of the transition rates for the migration in the limit.

\bigskip
Recall now that $P_N^{(r)} =(p^{(N,r)}_{\pi \tilde{\pi}})_{\pi, \tilde{\pi} \in   {\cal P}^{\ell}_n}$ describe the transition probabilities due to coalescence in the sample as a consequence of  the Cannings reproduction forward in time. If $\tilde{\pi}$ arises from $\pi$ by merging $2\leq k=\#\pi- \#\tilde{\pi}+1 $ blocks with the same label $x$ then
(28) in M\"ohle and Sagitov (2001)\nocite{MS01} states that 
\begin{equation}
\label{q^r_p^r}
q^{(r)}_{\pi \tilde{\pi}} := \lim_{N \rightarrow \infty} \frac{  1}{c^{N}} p^{(N,r)}_{\pi \tilde{\pi} }
=\lim_{N \rightarrow \infty}  \frac{c^N_{x}}{c^{N}}   \cdot \frac{  1}{c^N_{x}}p^{(N,r)}_{\pi \tilde{\pi} }= c_x \lambda_{\#\pi,k}^x.
\end{equation}
with $\lambda^x_{b,k}$ defined as in (\ref{Edeflambk}) with $\Lambda_x$ instead of $\Lambda.$
For other $\tilde{\pi}\neq \pi$ we have that $q^{(r)}_{\pi \tilde{\pi}}: =\lim_{N \rightarrow \infty} \frac{1}{c^N}p^{(N,r)}_{ \pi \tilde{\pi}}=0.$ 
Setting $q^{(r)}_{\pi \pi} =-\sum_{\tilde{\pi} \neq \pi} q^{(r)}_{\pi \tilde{\pi} }$ we see that
relation (\ref{q^r_p^r}) implies the analogous statement to (\ref{asymp_m}) for coalescence, 
\begin{equation}
\label{asymp_r}
P_N^{(r)} = I + c^N Q^{(r)} + R_N^{(r)},
\end{equation}
where $R_N^{(r)}=(r^{(N,r)}_{\pi \tilde{\pi}})_{\pi, \tilde{\pi}\in   {\cal P}^{\ell}_n}$ is the matrix of the rest terms.
Since  there are only finitely many (at most $2^n$) non-zero entries in each row we obtain $\sum_{\tilde{\pi} \in  {\cal P}^{\ell}_n} |r^{(N,r)}_{\pi \tilde{\pi}}|=o(c^N).$ 
\detail{If all sites have the same population dynamics then there are only finitely many different values for these sums since any $\pi \in {\cal P}^{\ell}_n$ with the same corresponding set $\{\#\pi_x, x \in G\}$ has the same set of  transition rates. 
Otherwise, we need some kind of uniformity condition in $x$ for (\ref{q^r_p^r}) at this point.}
Taken together this leads to  $||R_N^{(r)}||= o(c^N).$ 
 This implies that 
\begin{equation} 
P_N = I + c^N (Q_N + R_N),
\end{equation}
where
\begin{eqnarray}
Q_N&=& Q^{(m)}_N+Q^{(r)},\\
R_N&=&\frac{1}{c^N} ( R_N^{(m)} +R_N^{(r)} +R_N^{(m)}R_N^{(r)}
+c^N Q_N^{(m)}R_N^{(r)}+c^N R_N^{(m)} Q^{(r)}+(c^N)^2 Q_N^{(m)}Q^{(r)}).
\end{eqnarray}
with $|| R_N || \rightarrow 0$ as $N \rightarrow \infty$ since  $||R_N^{(m)}||= O((c^N)^2)$ and  $||R_N^{(r)}||= o(c^N).$ 
This and the convergence of $Q^{(m)}_N$ to $Q^{(m)}$ from (\ref{qmconv}) now completes the proof of 
(\ref{asymp}) and so of convergence of finite dimensional distributions.

In order to complete the proof of Theorem \ref{thm:conv_to_spatialLambda}  which states convergence 
in the Skorohod space $D(\IR_+,{\cal P}^{\ell}_n),$ it remains to show relative compactness in $D(\IR_+,{\cal P}^{\ell}_n).$ According to Corollary 3.7.4 in Ethier and Kurtz (1986)\nocite{EK} we need to show the following two conditions:
\begin{itemize}
\item[(i)] For every $\epsilon >0$ and $t\geq 0$ there exists a compact set $\Gamma_{\epsilon, t} \subseteq {\cal P}^{\ell}_n$ such that
$$ \liminf_{N \rightarrow \infty} \IP(\Pi^{\ell,N}_{[\frac{t}{c^N}]} \in  \Gamma_{\epsilon, t}) \geq 1-\epsilon.$$
\item[(ii)] For every $\epsilon >0$ and $t\geq 0$ there exists a $\delta>0$ such that 
$$\limsup_{ N \rightarrow \infty} \IP( w(\Pi^{\ell,N}_{[\frac{\cdot}{c^N}]}, \delta, T) \geq \epsilon) \leq \epsilon$$

where 
\begin{equation}\label{modcont}
w(\Pi^{\ell,N}_{[\frac{\cdot}{c^N}]}, \delta, T)= \inf_{\{t_i\}}\max_{i} \sup_{s,t \in [t_{i-1},t_i)} 
d_n(\Pi^{\ell,N}_{[\frac{s}{c^N}]},\Pi^{\ell,N}_{[\frac{t}{c^N}]}),
\end{equation}
and $\{t_i\}$ ranges over all partitions of the form $0=t_0<t_1< \cdots< t_{k-1}< T \leq t_{k}$ with 
$\min_{1 \leq i \leq k} (t_i - t_{i-1}) > \delta$ and $k \geq 1.$
\end{itemize}

\noindent
Since $\Ps_n^\ell$ is finite, it is compact and condition (i) is trivially fulfilled.
To prove condition (ii) we set $C:= ||Q||$ which is finite by assumption. Let $a_N=c_N(C+1)$ and let in the following $N$ be large enough so that $a_N<1$ and $||Q_N + R_N||\leq C +1.$  Now consider the discrete time Markov chain $(E^N_t, e^N_t)_{t \in \IN}$ which does not change its state with probability 
$1-a_N,$ changes only $e^N_t$ to $e^N_{t+1}= e^N_t+1$ with probability $a_N - \sum_{\tilde{\pi} \neq \pi} p^{(N)}_{\pi \tilde{\pi}}\geq 0$ if $E^N_{t+1}=E^N_t=\pi.$ Finally, we have $e^N_{t+1}= e^N_{t+1}$ and 
$E^N_{t+1}=\tilde{\pi}$ given that $E^N_t=\pi$ with probability $p^{(N)}_{\pi \tilde{\pi}}.$ This means that the process $E^N$ is just a version of $\Pi^{\ell,N}$ if the time between the steps is chosen to be $c_N.$ We have that the times between jumps of the Markov chain $(E^N,e^N)$ are given by i.i.d geometric  random variables $\tau_i^{N}$ with mean $\frac{1}{a_N}.$ Since $w(e^N_{[\frac{\cdot}{c^N}]}, \delta, T)=0$ whenever there exists a $J$ such that $\tau_i^N> \frac{\delta}{c^N}$ for $i=1,\dots, J$ and $\sum_{i=1}^J \tau_i^N\geq \frac{T}{c^N}$ it suffices to show that we can find $J$ and $\delta$ such that 
$$\liminf_{N \rightarrow \infty} \IP(\tau_i^N> \frac{\delta}{c^N} \text{ for }i=1,\dots, J \text{ and }\sum_{i=1}^J \tau_i^N\geq \frac{T}{c^N}) \geq 1-\eta.$$ This can be achieved since
\begin{eqnarray*}
 & &\IP(\tau_i^N> \frac{\delta}{c^N} \text{ for }i=1,\dots, J \text{ and }\sum_{i=1}^J \tau_i^N\geq \frac{T}{c^N}) \\
 &=&  \IP(\sum_{i=1}^J \tau_i^N\geq \frac{T}{c^N} | \tau_i^N> \frac{ \delta}{c^N} \text{ for }i=1,\dots, J )
 \cdot \IP(\tau_i^N> \frac{ \delta}{c^N} \text{ for }i=1,\dots, J)\\
 &\geq& \IP( \sum_{i=1}^J \tau_i^N\geq \frac{T }{c^N})
 \cdot \IP(\tau_i^N> \frac{ \delta}{c^N} )^J \rightarrow \IP(Z<J) \cdot \IP(X>\delta)^J,
\end{eqnarray*}
where $Z$ is a Poisson random variable with mean $T(C+1)$ and $X$ is exponential with mean $\frac{1}{C+1}.$ 
This finishes the proof.
\hfill $\Box$
\smallskip

%
%
\subsection{Proof of Theorem \ref{thm:torusconv}}

In the following let  $\Pi^{\ell,L}$ be a spatial  $\Lambda$-$n$-coalescent on $T^L$ with transition kernel $p^L$.
We will write $\IP^{\pi}$ to indicate that $\Pi^{\ell,L}$ is started in $\pi^{\ell,L} \in \Ps^{\ell,L}_n$ at time $0.$
We want to show that asymptotically $\Pi^{\ell,L}$ behaves like a coalescing random walk for which only ever two blocks meet and coalesce instantaneously, and that the meetings only take place after the blocks have been "randomized" in space such that any two are equally likely to meet at any such event. In order to prove this we define another process of coalescing random walks $\tilde{\Pi}^{\ell,L}$
coupled to $\Pi^{\ell,L}$ (defined on the same probability space) and show that asymptotically $\Pi^{\ell,L}$ behaves like $\tilde{\Pi}^{\ell,L}$ 
and that $\tilde{\Pi}^{\ell,L}$ has the desired properties. Thus, the evolution of $\Pi^{L}$ (unlabeled) is given by a non-spatial Kingman coalescent as $L \rightarrow \infty.$

We first couple the spatial $\Lambda$-$n$-coalescent with a coalescing random walk.
Let $L \in \N$ and let $(\xi(i)^L)_{i \in [n]}$ be an i{.}i{.}d{.} family of random walks on $T^L$ with transition kernel $p^L$.
We now define a version of the spatial $\Lambda$-$n$-coalescent by stipulating that the label of a block $B$ follows the random walk $\xi^L(\min(B)$.

We now require some notation. Let $i,j \in [n]$ and $t \in \R_+$.
Let $A^L_t(i)$ define the unique block of $\Pi_t^{\ell,L}$ with $i \in A^L_t(i)$ and let $M^L_t(i) = \xi^L_t(\min(A_t^L(i)))$ be the location (label) of this block.
We set 
\begin{eqnarray}
\label{ijmeet}
\tau^L(i,j) &:=& \inf\set{t \geq 0 \Mid M^L_t(i) = M^L_t(j)},\\
\label{ijcoalesce}
\tau_c^L(i,j) &:=& \inf\set{t \geq 0 \Mid A^L_t(i) = A^L_t(j)}.
\end{eqnarray}
In words, these are the first times at which the two blocks containing $i$ and containing $j$ meet and coalesce, respectively. We also define the jump times
\begin{equation}
\label{meetcoalesce}
0=\tau_0^L< \tau_1^L < \dotsb  \quad \text{ and } \quad 0 = \tau_{c,0}^L <  \tau_{c,1}^L < \dotsb
\end{equation}
where 
\begin{equation*}
\tau_{k+1}^L:=\inf_{i,j \in [n]}\set{\tau^L(i,j)  \Mid \tau^L(i,j) > \tau_k^L}
\end{equation*}
 is the first meeting time after $\tau_k^L$ of blocks \emph{that have not met before} and $\tau_{c,k+1}^L$ defined analogously is the first time after $\tau_{c,k}^L$ at which blocks coalesce.

\noindent
Assuming that no two blocks of the starting configuration have the same label, we define the coalescing random walk $\tilde{\Pi}^{\ell,L}$ by first setting $\tilde{\Pi}^{\ell,L}_0=\Pi^{\ell,L}_0$. 
The label of a block $\tilde{B}$ in $\tilde{\Pi}^{\ell,L}$ is following the random walk $\xi^L(\min(\tilde{B}))$ and as soon as blocks have the same label they coalesce instantaneously.
We set $\tilde{M}^L$ and $\tilde{A}^L$ as before, now for the process $\tilde{\Pi}^{\ell,L}$.
We define the quantities in (\ref{ijmeet}) to (\ref{meetcoalesce}) analogously for the process $\tilde{\Pi}^{\ell,L}$ but note that $\tilde{\tau}^L(i,j)=\tilde{\tau}^L_c(i,j)$ and $\tilde{\tau}_k^L= \tilde{\tau}^L_{c,k}$.
Furthermore note that $\tilde{\tau}_k^L = \tau_k^L$ for $k=1$ but that  the analogous statement is not true for $k > 1$ since blocks meeting in the spatial 
$\Lambda$-$n$-coalescent could part without coalescing. 
We will see later though that $\tilde{\tau}_k^L = \tau_k^L$ becomes very likely for large $L$ if the blocks are initially sampled far apart.

We will now prove that the spatial $\Lambda$-$n$-coalescent behaves  after rescaling like the coupled coalescing random walk. We define for any random walk
$Z^L$ on $T^L$ with  transition kernel $p^L$  the stopping times  
	\begin{equation}
	\label{H^Ldef}
	H^L \defeq \inf \set{t\geq 0 \Mid Z_t^L = 0},
	\end{equation}
	which is the \emph{first hitting time of the origin} and 
	\begin{equation}
	W^L \defeq \inf \set{t \geq 0 \Mid Z_0^L = Z_t^L = 0 \text{ and there is an } s \in [0,t] \text{ with } Z_s^L \neq 0 },
	\end{equation}
	the  \emph{first return time to the origin}. We also use these definitions for $L=\infty$ with the convention that $T^{\infty}=\IZ^2.$

\begin{lemma}
	\label{lemma:meins1}
	Let $i,j \in [n].$ 
	Then a.s. 
	\begin{equation}
	\tau_c^L(i,j) - \tau^L(i,j) = \tau_{\text{coal}} + \sum_{i = 1}^{A-1} W^L_i,
	\end{equation}
	where  $W^L_1, W^L_2, \dotsc$ 
	are i.i.d copies of  $W^L$ of a random walk $Z^L$ on $T^L$ with transition kernel $2 p^L.$ 
	The random variables $\tau_{\text{coal}}$ and $A$ are independent from $\{W^L_i\}_{i \in \IN}$ and from each other and  $\tau_{\text{coal}} \sim \textup{exp}(\lambda_{2,2})$ 
	as well as $A \sim \textup{geom}\left(\lambda_{2,2} (2 + \lambda_{2,2})^{-1}\right).$ 
\end{lemma}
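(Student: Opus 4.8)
The plan is to reduce the motion of the two relevant blocks to a single difference walk and then to read off the decomposition from the strong Markov property together with the lack of memory of the exponential clocks governing migration and coalescence. Fix $i,j \in [n]$. Under the coupling fixed above, the labels $M^L(i)$ and $M^L(j)$ of the blocks containing $i$ and $j$ evolve, up to coalescence, as two independent random walks with kernel $p^L$. I would therefore pass to their difference $Z^L := M^L(i) - M^L(j)$ on $T^L$: since each label jumps at total rate $1$ and $p^L$ is symmetric, the rate at which $Z^L$ moves by a step $e$ is $p^L(0,e)+p^L(0,-e)=2p^L(0,e)$, so $Z^L$ is a random walk with kernel $2p^L$ and total jump rate $2$. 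The two blocks share a site precisely when $Z^L=0$; thus $\tau^L(i,j)$ is the first hitting time of $0$ by $Z^L$, every later re-meeting is a return of $Z^L$ to $0$, and by the strong Markov property of $Z^L$ at $0$ the successive excursions away from and back to $0$ yield i.i.d.\ copies $W^L_1, W^L_2, \dotsc$ of $W^L$.

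Next I would analyse the evolution from the meeting time $\tau^L(i,j)$ onward, where by the strong Markov property the pair starts afresh at a common site. While the two blocks occupy the same site two independent exponential clocks compete: coalescence rings at rate $\lambda_{2,2}$, while separation (a jump of $Z^L$ away from $0$) rings at rate $2$. At each visit to $0$ the pair therefore coalesces before separating with probability $\lambda_{2,2}(2+\lambda_{2,2})^{-1}$, and by memorylessness this outcome is independent of the sojourn duration and independent across visits. Hence the number $A$ of visits to $0$ required for coalescence is geometric with parameter $\lambda_{2,2}(2+\lambda_{2,2})^{-1}$, the $A-1$ unsuccessful visits are separated by $A-1$ return excursions of $Z^L$, and the corresponding return times $W^L_1,\dots,W^L_{A-1}$ are, by the strong Markov property, i.i.d.\ and (being measurable with respect to the trajectory of $Z^L$) independent of the coalesce/separate outcomes and hence of $A$.

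To isolate $\tau_{\text{coal}}$ I would realise coalescence through a single rate-$\lambda_{2,2}$ Poisson clock that is active only while $Z^L = 0$, built on a probability space independent of the walk $Z^L$. Coalescence then occurs the first time the local time of $Z^L$ at $0$ reaches an $\exp(\lambda_{2,2})$ threshold $\tau_{\text{coal}}$, which is manifestly independent of $Z^L$, hence of $\{W^L_i\}$ and of the count $A$. Splitting the real time that elapses between $\tau^L(i,j)$ and $\tau_c^L(i,j)$ into the time spent at $0$ (which by construction totals $\tau_{\text{coal}}$) and the time spent away from $0$ (which totals $\sum_{i=1}^{A-1} W^L_i$) then yields the almost sure identity $\tau_c^L(i,j)-\tau^L(i,j) = \tau_{\text{coal}} + \sum_{i=1}^{A-1} W^L_i$ together with the asserted marginal laws.

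The main obstacle is precisely the joint distributional statement, namely establishing the full mutual independence of $\tau_{\text{coal}}$, $A$ and $\{W^L_i\}$ rather than only their marginals. The delicate point is that, at a visit ending in separation, the sojourn at $0$ is governed by the \emph{minimum} of the coalescence and separation clocks, so one must take care that it is exactly the local-time construction — in which the coalescence clock is decoupled from the migration from the outset — that makes $\tau_{\text{coal}}$ independent of the geometric count and of the excursion return times, and one must reconcile the bare holding times of $Z^L$ appearing in $W^L$ with this decoupling. Once the coupling is arranged so that the coalescence mechanism is independent of $Z^L$, everything else — the rate-$2$ kernel of the difference walk, the competing-clocks probability $\lambda_{2,2}(2+\lambda_{2,2})^{-1}$, and the i.i.d.\ renewal structure of the returns — follows routinely from the strong Markov property and the memorylessness of the exponential distribution.
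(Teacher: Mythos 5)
Your first two paragraphs are essentially the paper's proof: the paper reduces to the rate-$2$ difference walk, restarts at $\tau^L(i,j)$ by the strong Markov property, and at each visit to the origin runs the competition between an $\exp(2)$ departure clock and an $\exp(\lambda_{2,2})$ coalescence clock, which yields the success probability $\lambda_{2,2}(2+\lambda_{2,2})^{-1}$, the geometric count $A$, and, iterating the strong Markov property, the i.i.d.\ return times $W^L_i$. The independence used there rests on the elementary fact that for independent exponential clocks the minimum and the identity of the minimizer are independent, applied visit by visit; no additional construction is needed.

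The genuine gap is in your third paragraph, the local-time construction you introduce precisely to secure the joint independence. Two of its claims fail. First, in that coupling $\tau_{\text{coal}}$ is \emph{not} independent of $A$: $A-1$ is the number of completed $\exp(2)$ sojourns at $0$ before the occupation time of $Z^L$ at $0$ crosses the threshold $\tau_{\text{coal}}$, so conditionally on $\tau_{\text{coal}}=t$ one has $A-1 \sim \textup{Poisson}(2t)$; the geometric law is only the marginal after mixing, and ``independent of $Z^L$, hence of the count $A$'' is a non sequitur because $A$ is a joint function of $\tau_{\text{coal}}$ and the walk. Second, your final splitting identifies the time spent away from $0$ with $\sum_{i=1}^{A-1} W^L_i$, but by the paper's definition $W^L$ is the first return time of $Z^L$ \emph{started at} $0$ and therefore contains the initial $\exp(2)$ holding time at the origin, $W^L_i = H_i + E_i$; the time spent away from $0$ is only $\sum_{i=1}^{A-1} E_i$. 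What your construction actually yields is the different identity $\tau_c^L(i,j)-\tau^L(i,j) = \tau_{\text{coal}} + \sum_{i=1}^{A-1} E_i$ with $\tau_{\text{coal}}$ independent of the walk but correlated with $A$, not the decomposition in the statement. Relatedly, the parenthetical claim that the coalesce/separate outcomes are independent of the trajectory of $Z^L$ is false in your coupling: the $i$-th visit ends in coalescence with conditional probability $1-e^{-\lambda_{2,2} H_i}$ given $H_i$, so the outcomes are correlated with the very holding times that enter the $W^L_i$ (this is exactly the ``reconciliation'' problem you flagged, and it cannot be argued away within the decoupled construction, where the sojourn at an unsuccessful visit has conditional law $\exp(2+\lambda_{2,2})$ rather than $\exp(2)$). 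The repair is to drop the third paragraph and conclude as the paper does from the visit-wise competing-clocks picture of your second paragraph: per-visit sojourns independent of per-visit outcomes, i.i.d.\ Bernoulli outcomes determining $A$, and the strong Markov property at each departure and return supplying the i.i.d.\ return times, with $\tau_{\text{coal}}$ the coalescence time at the successful visit.
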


\begin{proof}
It is clear that $(M_t^L(i)-M_t^L(j))_{t \in \IR_x}$, the distance between the blocks containing $i$ and $j,$ is a rate $2$ random walk with transition kernel $p^L$ until the coalescence time $\tau_c^L(i,j).$ Using the strong Markov property we restart this random walk  at time $\tau^L(i,j)$, at which the blocks containing $i$ and $j$ meet for the first time. 
Therefore we restart in 0.
The time until one block migrates away from the other block is exponentially distributed with parameter $2,$ the time until they coalesce while in the same site is exponentially distributed with parameter $\lambda_{2,2}.$ Thus, the probability that the two coalesce before they part is given by  $\lambda_{2,2} (2 + \lambda_{2,2})^{-1}$ and this then happens after a time $\tau_{\text{coal}}.$ If they part it will take time $W^L_1$ to return to the same site. Using the strong Markov property we can repeat the argument leading to $A-1$ independent return times $W^L_i$ with $A$ as above, and thus to the statement of the lemma. 
\end{proof}

As an immediate corollary we obtain that on the time scale that we are considering (recall the definition of $s_L$ in (\ref{s_Ldef})) the time from the first encounter of two blocks until their eventual coalescence is asymptotically short.
\begin{cor}
	\label{kor:stopkonv}
	Let $i,j \in [n]$.	For all $\epsilon > 0$ and $\delta>0$ there is an $L(\epsilon,\delta)\in \N$, such that for all  $L \geq L(\epsilon,\delta)$ and for all $\pi^{\ell,L} \in \Ps^{\ell,L}_n$ we have
	\[\IP^{\pi^{\ell,L}}(s_L^{-1} \abs{\tau_c^L(i,j) - \tau^L(i,j)}\geq \epsilon) < \delta.\]
\end{cor}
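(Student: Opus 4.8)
The plan is to read off the exact distribution of $\tau_c^L(i,j) - \tau^L(i,j)$ from Lemma~\ref{lemma:meins1} and to show that, after dividing by $s_L$, it tends to $0$ in probability as $L \to \infty$, uniformly in the starting configuration. The crucial observation is that the right-hand side of Lemma~\ref{lemma:meins1}, namely $\tau_{\text{coal}} + \sum_{k=1}^{A-1} W_k^L$, has a law determined solely by $\lambda_{2,2}$ and by the return-time distribution of the walk $Z^L$ on $T^L$; in particular it does \emph{not} depend on $\pi^{\ell,L}$. Hence $\IP^{\pi^{\ell,L}}(s_L^{-1}\abs{\tau_c^L(i,j) - \tau^L(i,j)} \geq \epsilon)$ is the same for every starting configuration (the degenerate case in which $i$ and $j$ already share a block makes the quantity vanish and is trivial), so it suffices to bound the right-hand side, and the uniformity over $\pi^{\ell,L}$ demanded in the statement comes for free.

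First I would estimate the mean first return time $\IE[W^L]$. The walk $Z^L$ is an irreducible, symmetric continuous-time Markov chain on the $(2L+1)^2$ sites of $T^L$ with transition kernel $2p^L$; its stationary distribution is therefore uniform, $\pi_0 = (2L+1)^{-2}$, and the total jump rate out of every site equals $2$. The standard identity between the stationary distribution of a continuous-time chain and its mean recurrence time (the long-run fraction of time at $0$ equals the mean holding time $1/2$ at $0$ divided by the mean cycle length $\IE[W^L]$) then yields
\[
\IE[W^L] = \frac{1}{2\,\pi_0} = \frac{(2L+1)^2}{2}.
\]
This is the heart of the matter and the step I expect to be the main obstacle: everything hinges on the return time being only of order $L^2$, which is precisely why the recurrence of the two-dimensional walk does not destroy the rescaling.

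With this in hand the remainder is routine. Since $A \sim \textup{geom}\left(\lambda_{2,2}(2+\lambda_{2,2})^{-1}\right)$ is independent of the i.i.d.\ family $\{W_k^L\}$ and has finite mean $\IE[A] = (2+\lambda_{2,2})/\lambda_{2,2}$, independence (or Wald's identity) gives
\[
\IE\Bigl[\tau_{\text{coal}} + \sum_{k=1}^{A-1} W_k^L\Bigr] = \frac{1}{\lambda_{2,2}} + \bigl(\IE[A]-1\bigr)\,\frac{(2L+1)^2}{2} = \frac{1}{\lambda_{2,2}} + \frac{(2L+1)^2}{\lambda_{2,2}}.
\]
Recalling $s_L = (2L+1)^2 \log(2L+1)$ from (\ref{s_Ldef}), the ratio of this expectation to $s_L$ is of order $(\log(2L+1))^{-1}$ and hence vanishes. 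Markov's inequality then gives
\[
\IP^{\pi^{\ell,L}}\bigl(s_L^{-1}\abs{\tau_c^L(i,j) - \tau^L(i,j)} \geq \epsilon\bigr) \leq \frac{1}{\epsilon\, s_L}\Bigl(\frac{1}{\lambda_{2,2}} + \frac{(2L+1)^2}{\lambda_{2,2}}\Bigr) \longrightarrow 0
\]
as $L \to \infty$, uniformly in $\pi^{\ell,L}$. Thus for any $\delta > 0$ one may choose $L(\epsilon,\delta)$ so large that the left-hand side is below $\delta$ for all $L \geq L(\epsilon,\delta)$ and all $\pi^{\ell,L}$, which is the assertion.
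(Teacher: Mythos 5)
Your proof is correct, but it takes a genuinely different route from the paper's. The paper argues softly: it couples the torus return times to those on the plane via projection, noting $W^L_i \leq W^\infty_i$, so that $\tau_{\text{coal}} + \sum_{i=1}^{A-1} W^\infty_i$ is an almost surely finite random variable whose law depends on neither $L$ nor $\pi^{\ell,L}$ (by recurrence of the planar walk), and then simply invokes $s_L \to \infty$. You instead stay on the torus and compute: Kac's formula for the finite continuous-time chain gives $\IE[W^L] = (2L+1)^2/2$ exactly, and Wald's identity (legitimate, since the lemma asserts independence of $A$ from $\{W^L_i\}$, with $\IE[A] = (2+\lambda_{2,2})/\lambda_{2,2} < \infty$) plus Markov's inequality yields a bound of order $\bigl(\epsilon \lambda_{2,2} \log(2L+1)\bigr)^{-1}$, uniform in $\pi^{\ell,L}$ because the law of $\tau_c^L(i,j)-\tau^L(i,j)$ does not depend on the starting configuration. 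Each approach buys something the other does not: the paper's comparison argument requires no moment computation at all, and indeed a Markov-inequality argument could \emph{not} be run through its dominating variable, since the planar walk is null recurrent and $\IE[W^\infty] = \infty$ --- your observation that the torus keeps $\IE[W^L]$ finite and only of order $L^2$ is exactly the point that makes the quantitative route viable. In exchange, your proof delivers an explicit convergence rate $O\bigl((\log L)^{-1}\bigr)$, which the paper's proof does not, and which incidentally corroborates the remark in the Discussion section that the expected re-meeting time is of order $L^2$, i.e.\ of order $(\log L)^{-1}$ after rescaling, explaining the slow convergence seen in the simulations. Two small points you rightly handle but should keep explicit: the case $A_0(i) = A_0(j)$ is trivial since the difference vanishes, and $\lambda_{2,2} > 0$ is guaranteed by the standing assumption $\Lambda([0,1]) > 0$.
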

\begin{proof}
	Let $L \in \N$ and $\pi^{\ell,L} \in \Ps^{\ell,L}_n$.
	If $A_0(i) = A_0(j)$ then $\tau_c^L(i,j) = 0 = \tau^L(i,j)$ and the claim follows. Now let $A_0(i) \neq A_0(j)$
	and let   $W^\infty_1, W^\infty_2, \dotsc$ be as in Lemma \ref{lemma:meins1} for $L = \infty$.
	If we consider the random walk on $T^L$ as the projection of a random walk on $\IZ^2$ that defines the $W^{\infty}_i$ then 
	it follows that $W^L_i \leq W^\infty_i$.
	With Lemma \ref{lemma:meins1} we have  for all $\pi^{\ell,L} \in \Ps^{\ell,L}_n$ that
	\begin{align*}
		\IP^{\pi^{\ell,L}}(s_L^{-1} \abs{\tau_c^L(i,j) - \tau^L(i,j)}\geq \epsilon) &= \IP^{\pi^{\ell,L}}\left(\tau_{\text{coal}} + \sum_{i = 1}^{A-1} W^L_i \geq s_L \epsilon \right) \\
		&\leq \IP^{\pi^{\ell,L}}\left(\tau_{\text{coal}} + \sum_{i = 1}^{A-1} W^\infty_i \geq s_L \epsilon \right).  
	\end{align*}
	Since the random walk on  $\Z^2$ is recurrent  we have $W^\infty_i < \infty$ almost surely. Likewise
	 $\tau_{\text{coal}}$ and  $A$ are almost surely finite. Since all these random variables  do not depend on  $L$ and ${\pi^{\ell,L}}$ the claim follows now with $s_L \konv[L] \infty.$ 
	\end{proof}

Let  ${\pi^{\ell,L}} \in \Ps^{\ell,L}_n$ and let every two blocks in ${\pi^{\ell,L}}$ have different labels.
It is clear, that in the coalescing random walk $\tilde{\Pi}^{\ell,L}$ starting in ${\pi^{\ell,L}}$ almost surely only pairwise mergers happen. 
So for $k \in [\#{\pi^{\ell,L}} -1]$ we have that $\tilde{\tau}^L_k< \infty$ is a time at which two blocks in $\tilde{\Pi}^{\ell,L}$  merge.
By swapping the random walks belonging to the two blocks merged at time $\tilde{\tau}^L_k$  after time $\tilde{\tau}^L_k$ we can define a new coalescing random walk which agrees with $\tilde{\Pi}^{\ell,L}$ until time $\tilde{\tau}^L_k$ and which has the same distribution as $\tilde{\Pi}^{\ell,L}$. More precisely, this means that if $ \tilde{B}_1$ and $ \tilde{B}_2$ are the two blocks that meet at time $\tilde{\tau}^L_k$ with $\min  \tilde{B}_1 < \min  \tilde{B}_2$ then the label of the newly created block follows the motion $\xi_t^L(\min \tilde{B}_2)$ rather than $\xi_t^L(\min  \tilde{B}_1).$ 
Let $\tilde{\tau}^{L,(2)}_{k+1}$ be the first meeting time of blocks in this process after time $\tilde{\tau}^L_k$.
For $q>0$ we consider the event
	\begin{equation}
	\label{B_Ldef}
	B^L \defeq B^L(q,{\pi^{\ell,L}}) \defeq  \{  \tilde{\tau}^L_{k+1} - \tilde{\tau}^L_k > s_L q, \tilde{\tau}^{L,(2)}_{k+1} - \tilde{\tau}^L_k > s_L q \text{ for all } k \in [\#{\pi^{\ell,L}} -2]\},
	\end{equation}
on which (after rescaling) it takes some time until a block that is created in a coalescence event meets further blocks whether it follows the motion of the first or the second block that took part in the coalescence event. Note that for $\#{\pi^{\ell,L}} = 2$ we have $B^L(q,{\pi^{\ell,L}})=\Omega$ since no conditions need to be met. 

%
%
\begin{propn}
	\label{thm:konvergenz} Let $q>0.$
	For all $\epsilon>0$, $\delta > 0$ and $\nu > 0$ there are constants  $L(\epsilon,\delta,\nu) \in \N$ so that for all $L \geq L(\epsilon,\delta,\nu)$ 
	and all ${\pi^{\ell,L}} \in \Ps^{\ell,L}_n$ with $\#{\pi^{\ell,L}} \geq 2$, such that no two blocks of ${\pi^{\ell,L}}$ carry the same label, we have
	\begin{equation}
		\label{eq:Sprungzeit}
		\IP^{\pi^{\ell,L}}\left(s_L^{-1}\norm{(\tau_{c,1}^L, \dotsc, \tau_{c,n-1}^L) - (\tilde{\tau}_{1}^L, \dotsc, \tilde{\tau}_{n-1}^L)} \geq \epsilon, B^L(q,{\pi^{\ell,L}}) \right) < \delta 
	\end{equation}
	as well as 
	\begin{equation}
		\label{eq:Sprungart}
		\IP^{\pi^{\ell,L}}\left(\Pi^{L}_{\tau_{c,k}^L} \neq \tilde{\Pi}^{L}_{\tilde{\tau}_{k}^L} \text{ for some } k=1,\dots, \#{\pi^{\ell,L}}-1,B^L(q,{\pi^{\ell,L}})\right) < \nu.
	\end{equation}
\end{propn}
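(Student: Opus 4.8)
The plan is to push the coupling between $\Pi^{\ell,L}$ and the coalescing random walk $\tilde\Pi^{\ell,L}$ through all $\#\pi^{\ell,L}-1$ coalescence events by induction, showing that on $B^L$ intersected with a high-probability event on which every coalescence delay is short, the two processes produce the same sequence of binary mergers and their coalescence times differ only by these delays. The structural observation driving everything is that both processes transport block labels along the \emph{same} random walks $\xi^L(\min(\cdot))$, and after a merger the surviving block follows $\xi^L$ of the smaller of the two minima in \emph{both} processes; hence, immediately after a coalescence in $\Pi^{\ell,L}$ the labeled configurations of $\Pi^{\ell,L}$ and $\tilde\Pi^{\ell,L}$ coincide again. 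The only discrepancy is confined to the windows during which a pair that has already met in $\Pi^{\ell,L}$ is still bouncing in and out of a common site before coalescing.

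First I would fix $\epsilon,\delta,\nu,q$ and introduce the good event
\[ G^L \defeq \bigcap_{\{i,j\}\subset[n]} \set{ s_L^{-1}\abs{\tau_c^L(i,j)-\tau^L(i,j)} < \min(q,\epsilon)}. \]
By Lemma~\ref{lemma:meins1} the delay $\tau_c^L(i,j)-\tau^L(i,j)$ has a law not depending on the starting configuration, so Corollary~\ref{kor:stopkonv} applied to each of the at most $\binom{n}{2}$ pairs, together with a union bound, gives $\IP^{\pi^{\ell,L}}((G^L)^c)<\min(\delta,\nu)$ for all $L$ large enough, uniformly in $\pi^{\ell,L}$. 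It then suffices to prove that on $G^L\cap B^L$ the two processes match exactly, i.e. $\Pi^L_{\tau_{c,k}^L}=\tilde\Pi^L_{\tilde\tau_k^L}$ and $\tau_{c,k}^L=\tilde\tau_k^L+D_k$ with $0\le D_k<s_L\min(q,\epsilon)$ the $k$-th coalescence delay, for every $k$. Granting this, the sup-norm in \eqref{eq:Sprungzeit} equals $\max_k\abs{\tau_{c,k}^L-\tilde\tau_k^L}=\max_k D_k<s_L\epsilon$, so both bad events in \eqref{eq:Sprungzeit} and \eqref{eq:Sprungart} are contained in $(G^L)^c$, of probability $<\delta$ resp.\ $<\nu$.

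The induction over $k$ is the heart of the argument; for $k=1$, and in the trivial case $\#\pi^{\ell,L}=2$ where $B^L=\Omega$, the processes are identical up to the first meeting so $\tau_1^L=\tilde\tau_1^L$. At the $k$-th step I assume the configurations agree just after $\tau_{c,k-1}^L=\tilde\tau_{k-1}^L+D_{k-1}$; since $\tilde\tau_k^L-\tilde\tau_{k-1}^L>s_Lq>D_{k-1}$ on $B^L\cap G^L$, the first meeting of not-previously-met blocks of $\Pi^{\ell,L}$ after $\tau_{c,k-1}^L$ occurs at the same time and between the same pair as the $k$-th meeting of $\tilde\Pi^{\ell,L}$, giving $\tau_k^L=\tilde\tau_k^L$ (so the delays do not compound). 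It remains to rule out interference during the coalescence window $(\tau_k^L,\tau_{c,k}^L)$ of this pair $\tilde B_1,\tilde B_2$ (with $\min\tilde B_1<\min\tilde B_2$). Here both halves of \eqref{B_Ldef} enter: the block following $\xi^L(\min\tilde B_1)$ stays clear of the others until $\tilde\tau_{k+1}^L>\tilde\tau_k^L+s_Lq$, while the block following $\xi^L(\min\tilde B_2)$ — the one that is \emph{extra} in $\Pi^{\ell,L}$ relative to $\tilde\Pi^{\ell,L}$ during the window — stays clear until $\tilde\tau^{L,(2)}_{k+1}>\tilde\tau_k^L+s_Lq$ by definition of the swapped process. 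Since $D_k<s_Lq$, the pair coalesces before either time, only the binary merger $\tilde B_1\cup\tilde B_2$ occurs, the surviving block again tracks $\xi^L(\min\tilde B_1)$, and the configurations re-synchronise after $\tau_{c,k}^L$, closing the induction.

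I expect the main obstacle to be precisely this last step: excluding a spurious meeting of the transient extra block following $\xi^L(\min\tilde B_2)$ during the delay. Controlling the block following $\xi^L(\min\tilde B_1)$ is immediate from $\tilde\tau_{k+1}^L-\tilde\tau_k^L>s_Lq$, but the block following $\xi^L(\min\tilde B_2)$ does not appear in $\tilde\Pi^{\ell,L}$ after $\tilde\tau_k^L$ at all, which is exactly why the auxiliary swapped process and the second condition in \eqref{B_Ldef} were introduced. Verifying that these two conditions jointly cover every way a third block could approach during the window — distinguishing approaches via $\xi^L(\min\tilde B_1)$, via $\xi^L(\min\tilde B_2)$, and between two untouched blocks (which would force a forbidden meeting in $\tilde\Pi^{\ell,L}$ or its swap) — and that consequently no multiple merger can arise, is the delicate bookkeeping. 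The remaining accounting is routine once the one-step matching is established.
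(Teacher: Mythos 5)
Your proof is correct, and it rests on the same two pillars as the paper's own argument: uniform shortness of the meeting-to-coalescence delays (Lemma~\ref{lemma:meins1} via Corollary~\ref{kor:stopkonv}) and the two clauses of $B^L$ in \eqref{B_Ldef} to exclude interference during each delay window --- your three-way case distinction (a third block approaching via $\xi^L(\min\tilde{B}_1)$, via $\xi^L(\min\tilde{B}_2)$, or two untouched blocks meeting each other) is exactly the content of the paper's observation that the next meeting time equals $\tilde{\tau}^L_{k+1}$ or $\tilde{\tau}^{L,(2)}_{k+1}$. Where you genuinely diverge is in the bookkeeping. The paper runs a \emph{probabilistic} induction over $k$, establishing five per-step estimates ((\ref{enum:thm.1})--(\ref{enum:thm.3})) and restarting both processes at $\tau^L_{c,k}$ by the strong Markov property, which obliges it to remark that the random restart configuration is harmless only because the estimates are uniform in the starting partition. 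You instead quantify all the randomness once: the single good event $G^L$ is obtained by a union bound over the at most $\binom{n}{2}$ pairs, which is legitimate precisely because $\tau^L_c(i,j)-\tau^L(i,j)$ is defined from time $0$ and Corollary~\ref{kor:stopkonv} controls it uniformly in $\pi^{\ell,L}$ no matter at which step the pair first meets; after that, the inclusion $G^L\cap B^L\subseteq\set{\text{exact matching of mergers and } \tau^L_k=\tilde{\tau}^L_k}$ is purely deterministic and is verified by pathwise induction. This buys you two things: no strong Markov restart (and hence no uniformity-at-a-random-starting-point discussion), and an explicit record that the delays do not compound, since the meeting times themselves coincide on the good event --- a fact the paper only carries implicitly through (\ref{enum:thm.2b}). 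The paper's per-step formulation, on the other hand, localizes the error probabilities and is the form actually consumed later in the proof of Corollary~\ref{kor:konvspruengec}, but your statement trivially implies it. Two minor points you should make explicit when writing this up: the last step $k=\#\pi^{\ell,L}-1$ is not covered by the conditions in \eqref{B_Ldef} (which run only over $k\in[\#\pi^{\ell,L}-2]$), and there it is the fact that only two blocks remain that rules out interference; and the exclusion of multiple mergers in $\Pi^{\ell,L}$ uses that a $\Lambda$-merger of more than two blocks requires at least three blocks co-located at one site, which your case analysis forbids during the windows and which cannot happen outside them.
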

Note that the last statement (\ref{eq:Sprungart}) is about the unlabeled partition structures of the two processes.
%
%
\noindent
For proving this statement we set  for $\Pi^{\ell,L}$ started in ${\pi^{\ell,L}}$ 
\begin{equation}
\label{C^L(0)}
	C^L(0) \defeq \set{(i,j) \in [n]^2 \Mid i \nsim_{{\pi^{\ell,L}}} j}
	\end{equation}
	the set of index pairs that are initially in different blocks
\begin{proof} 
	For any $q>0$ there are almost surely exactly $\#{\pi^{\ell,L}}-1$ coalescence events of  $\tilde{\Pi}^{\ell,L}$ started in ${\pi^{\ell,L}}$ since at any coalescence event exactly two blocks merge.
	  We will first show the following statements by induction in  $k \in [\#{\pi^{\ell,L}}-1]:$ \\
		 There is an $L(\epsilon,\delta, \nu) \in \N$   such that for all ${\pi^{\ell,L}}$ and  $L \geq L(\epsilon,\delta,\nu),$
			\begin{eqnarray}
			 \label{enum:thm.1}
			&&\IP^{\pi^{\ell,L}}(\tau_{c,k}^L - \tilde{\tau}_{k}^L\geq s_L \epsilon, B^L(q,{\pi^{\ell,L}})) < \delta,\\
			 \label{enum:thm.1b}
			&&\IP^{\pi^{\ell,L}}(\tau_{c,k}^L < \tilde{\tau}_{k}^L, B^L(q,{\pi^{\ell,L}})) < \delta,\\
			\label{enum:thm.2}
			&&\IP^{\pi^{\ell,L}}(\tau_{c,k}^L \geq \tau_{k+1}^L, B^L(q,{\pi^{\ell,L}})) < \nu,\\
			\label{enum:thm.2b}
			&&\IP^{\pi^{\ell,L}}(\tau_{k+1}^L \neq  \tilde{\tau}_{k+1}^L, B^L(q,{\pi^{\ell,L}})) < \nu,\\
			\label{enum:thm.3}
			&&\IP^{\pi^{\ell,L}}(  \Pi^{\ell,L}_{\tau_{c,k}^L} \neq \tilde{\Pi}^{\ell,L}_{\tau_{c,k}^L}, B^L(q,{\pi^{\ell,L}})) < \nu.
			\end{eqnarray}
These statements will then complete the proof since (\ref{eq:Sprungzeit}) follows immediately from (\ref{enum:thm.1}) and (\ref{enum:thm.1b}) for all $k \in [\#{\pi^{\ell,L}}-1].$ Likewise, (\ref{eq:Sprungart}) follows from (\ref{enum:thm.1b}), (\ref{enum:thm.2}), and (\ref{enum:thm.3}) for all $k \in [\#{\pi^{\ell,L}}-1].$ 

	In order to start the induction let first $k=1$. We begin by showing (\ref{enum:thm.1}).
	Due to Corollary \ref{kor:stopkonv} we can find $L(\epsilon,\delta) \in \N$ with
	\[\IP^{\pi^{\ell,L}}(\tau_c^L(i,j) - \tau^L(i,j)\geq s_L \epsilon \text{ for all } (i,j) \in [n]^2) < \frac{\delta}{n^2}\]
	for all $L \geq L(\epsilon,\delta)$ and all ${\pi^{\ell,L}} \in \Ps^{\ell,L}_n$ with $\#{\pi^{\ell,L}} \geq 2.$
	Let $(i,j) \in C^L(0)$ so that  $\tau^L_c(i,j) \geq \tau^L_{c,1}$.  
	Thus, on the event $A_{i,j} \defeq \set{\tau_{1}^L = \tau^L(i,j)},$ it follows that
	\begin{align*}
		\IP^{\pi^{\ell,L}}(\tau_{c,1}^L - \tau_1^L\geq s_L \epsilon, A_{i,j},B^L(q,{\pi^{\ell,L}}) ) &= \IP^{\pi^{\ell,L}}(\tau_{c,1}^L - \tau^L(i,j) \geq s_L \epsilon, A_{i,j},B^L(q,{\pi^{\ell,L}}) ) \\
		&\leq \IP^{\pi^{\ell,L}}(\tau_c^L(i,j) - \tau^L(i,j)\geq s_L \epsilon ) < \frac{\delta}{n^2}.
	\end{align*}
	Hence, with $\IP^{\pi^{\ell,L}}(A_{i,j} \text{ for one } (i,j)\in C^L(0) ) = 1$ we have
	\[\IP^{\pi^{\ell,L}}(\tau_{c,1}^L - \tau_1^L\geq s_L \epsilon,B^L(q,{\pi^{\ell,L}}) ) \leq \sum_{(i,j)\in [n]^2} \IP^{\pi^{\ell,L}}(\tau_{c,1}^L - \tau_1^L\geq s_L \epsilon, A_{i,j},B^L(q,{\pi^{\ell,L}}) ) < \delta\]
	for all $L \geq L(\epsilon,\delta)$ and all ${\pi^{\ell,L}} \in \Ps^{\ell,L}_n$ with $\#{\pi^{\ell,L}} \geq 2,$ which is (\ref{enum:thm.1}) since $\tilde{\tau}_{1}^L=\tau_1^L$.

	Again because of $\tilde{\tau}_{1}^L=\tau_1^L$ statement (\ref{enum:thm.1b}) is trivial for $k=1$ since $\tau_{c,1}^L \geq \tau^L_1$ by definition.
	For showing (\ref{enum:thm.2}) assume that $\#{\pi^{\ell,L}} \geq 3$ as the statement is immediate for $\#{\pi^{\ell,L}}=2$.
	Observe that we have $\tau_2^L = \tilde{\tau}_2^L$ 	or $\tau_2^L = \tilde{\tau}_{2}^{L,(2)}$. In both cases the arguments are completely analogous so it suffices to show the result for $\tilde{\tau}_2^L$ 
	instead of $\tau_2^L ,$ and hence to consider the event $\tau^L_{c,1} \geq \tilde{\tau}_2^L.$

	We have $\tau^L_1=\tilde{\tau}^L_1 < \tilde{\tau}^L_{2} <\infty$ almost surely.
	From  (\ref{enum:thm.1}) for $k = 1$ and $\epsilon = q$ we obtain that there is an $L(\nu) = L(q,\nu) \in \N$ so that for all $L \geq L(\nu)$ and ${\pi^{\ell,L}} \in \Ps^{\ell,L}_n$ with $\#{\pi^{\ell,L}} \geq 3,$ 
	\begin{align*}
		\IP^{\pi^{\ell,L}}\left(\tau^L_{c,1} \geq \tilde{\tau}_2^L,B^L(q,{\pi^{\ell,L}})\right) &= \IP^{\pi^{\ell,L}}\left( \tau^L_{c,1} -  \tilde{\tau}_1^L \geq \tilde{\tau}_2^L - \tilde{\tau}_1^L \geq s_L q,B^L(q,{\pi^{\ell,L}})  \right)\\
				& \leq \IP^{\pi^{\ell,L}}\left( \tau^L_{c,1} -  \tilde{\tau}_1^L \geq s_L q,B^L(q,{\pi^{\ell,L}})  \right)  < \nu,
	\end{align*}
which proves  (\ref{enum:thm.2}).

	For showing (\ref{enum:thm.2b}) and (\ref{enum:thm.3}) assume again that $\#{\pi^{\ell,L}} \geq 3$ as the statement is trivially true for $\#{\pi^{\ell,L}}=2.$  We observe that from $\tau^L_{c,1} < \tau_2^L$ it follows that there is a coalescence event for $\Pi^{\ell,L}$ before any additional blocks meet.
	Thus, exactly the two blocks that met at time $\tau_1^L$ will have coalesced at time $\tau_{c,1}^L$ before meeting other blocks implying $ \Pi^{\ell,L}_{\tau_{c,1}^L} = \tilde{\Pi}^{\ell,L}_{\tau_{c,1}^L}$ and $\tau^L_{2} = \tilde{\tau}^L_2$. Thus,  (\ref{enum:thm.2b}) and (\ref{enum:thm.3}) follow from (\ref{enum:thm.2}).

	Let the claim now be true for $k \in [\#{\pi^{\ell,L}}-2]$ and let us consider $k+1 \in [\#{\pi^{\ell,L}}-1]$.
	We can assume $\tau^L_{c,k} < \tilde{\tau}^L_{k+1} = \tau^L_{k+1}$ and $\Pi^{\ell,L}_{\tau_{c,k}^L} = \tilde{\Pi}^{\ell,L}_{\tau_{c,k}^L}$ since these events have by the induction assumption asymptotically probability 1. 
	 Using the strong Markov property of $\Pi^{\ell,L}$ and $\tilde{\Pi}^{\ell,L}$ we can restart both processes at time $\tau_{c,k}^L$ (note that for $k+1$ all statements only concern the processes at times greater than $\tau_{c,k}^L$). 
	Since on the above events  both processes start in the same partition (for which different blocks have different labels)  the proof for $k+1$ works now analogous to the case $k=1$.
	The randomness of the new starting point poses no problem since we have uniform results (or alternatively because the space $\Ps^{\ell,L}_n$ is finite).
\end{proof}

%
%

In order to study the asymptotic behavior of  $\tilde{\Pi}^{\ell,L}$ and also of $\Pi^{\ell,L}$ we need some results on coalescing random walks that were already shown in 
Cox (1989) \nocite{jC89} 
%
%
%
\begin{propn}
	\label{thm:coxthm4}
	Let $Z^L$ be a simple symmetric random walk on $T^L$.
	Let $(a_L)_{L \in \N}$ be a sequence of nonnegative numbers with 
	\[\limty[L] L^{-1} \sqrt{\log L}\; a_L = \infty, \quad (a_L)_{L \in \N} = o(L).\] 
	Let $H^L$ be the first hitting time of the origin as in (\ref{H^Ldef}). Then
	\[\limty[L] \sup_{\norm{z} \geq a_L, t \geq 0} \abs{\IP^z\left(\frac{H^L}{s_L} > t\right) - \exp\left(-\frac{1}{2}\boldsymbol{\pi} t\right)} = 0. \]
\end{propn}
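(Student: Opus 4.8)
The plan is to follow Cox (1989): the assertion is precisely that the rescaled first hitting time of a fixed point by the rapidly mixing recurrent walk $Z^L$ is asymptotically exponential, uniformly over far-away starting points. Two essentially independent ingredients are needed, the exponential \emph{shape} of the limit law and the identification of the \emph{rate} $\tfrac12\boldsymbol{\pi}$, and both rest on the same structural fact, a separation of time scales: the simple symmetric random walk on $T^L$ relaxes to its uniform stationary law $u_L$ within a mixing time of order $(2L+1)^2$, whereas $H^L$ lives on the much longer scale $s_L=(2L+1)^2\log(2L+1)$.

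First I would establish the exponential shape by a regeneration argument exploiting fast mixing. Fixing an intermediate scale $m_L$ with $(2L+1)^2 \ll m_L \ll s_L$, the walk couples to $u_L$ with $o(1)$ error over each window of length $m_L$, so (as long as it is not currently sitting near the origin) the probability of visiting $0$ during a given window is, to leading order, a single number independent of where the window begins. Consequently the number of windows until $0$ is hit is asymptotically geometric, which gives that $H^L/\IE_{u_L}[H^L]$ converges in distribution to a mean-one exponential.

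Second, I would pin down the rate by computing the stationary-start expectation through the fundamental-matrix identity $\IE_{u_L}[H^L] = Z_{00}/u_L(0)$, where $u_L(0) = (2L+1)^{-2}$ and $Z_{00} = \sum_{t \ge 0}\bigl(p^L_t(0,0) - u_L(0)\bigr)$. Up to the mixing time this on-diagonal kernel agrees with that of the free walk on $\IZ^2$, and the local central limit theorem gives $p_{2n}(0,0) \sim (\boldsymbol{\pi}\, n)^{-1}$; summing up to $t \sim (2L+1)^2$ yields $Z_{00} \sim \tfrac{2}{\boldsymbol{\pi}}\log(2L+1)$ and hence $\IE_{u_L}[H^L] \sim \tfrac{2}{\boldsymbol{\pi}}\, s_L$. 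Together with the exponential shape this is exactly the claimed rate $\tfrac12\boldsymbol{\pi}$ after dividing by $s_L$.

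It remains to transfer from a stationary start to a start at $\norm{z} \ge a_L$ and to make everything uniform in $z$ and in $t \ge 0$, and this is where I expect the main difficulty to lie. After the mixing time the law of $Z^L$ is within $o(1)$ of $u_L$, so the only obstruction is an early visit to $0$ during the transient; by the two-dimensional potential-kernel asymptotics (the potential kernel grows like $\tfrac{2}{\boldsymbol{\pi}}\log\norm{x}$ at distance $\norm{x}$), the probability of hitting $0$ before mixing, started at distance at least $a_L$, is of order $\log((2L+1)/a_L)/\log(2L+1)$, which tends to $0$ precisely because $\limty[L] L^{-1}\sqrt{\log L}\,a_L = \infty$ forces $a_L/L \gg (\log L)^{-1/2}$. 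The delicate point is that the hypotheses on $a_L$ are tuned exactly so that the initial transient becomes invisible on the $s_L$-scale while still allowing $a_L = o(L)$; controlling this early-hitting contribution together with the mixing error, simultaneously and uniformly over all admissible $z$ and all $t \ge 0$ — rather than the shape or the rate computation, both of which are comparatively routine — is the heart of the argument.
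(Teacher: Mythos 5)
Your proposal and the paper diverge in an essential way: the paper does not prove this proposition at all --- its entire justification is the one-line remark that it is the special case $d=2$ of Theorem 4 in Cox (1989), with the hypotheses on $(a_L)$ taken verbatim from Cox's theorem. You instead reconstruct the proof that the paper outsources, and your reconstruction is sound and close to the standard (indeed to Cox's) argument: the separation of scales between the relaxation time $\asymp (2L+1)^2$ and $s_L=(2L+1)^2\log(2L+1)$ yields the exponential shape (this step could even be outsourced to the general fact that for a reversible chain the stationary hitting time of a point is asymptotically exponential once its mean dominates the relaxation time), while the Kac-type identity $\IE_{u_L}[H^L]=Z_{00}/u_L(0)$ plus the local CLT pins the rate. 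Your constant is correct: $Z_{00}\sim\frac{2}{\boldsymbol{\pi}}\log(2L+1)$ gives $\IE_{u_L}[H^L]\sim\frac{2}{\boldsymbol{\pi}}s_L$, hence rate $\frac{1}{2}\boldsymbol{\pi}$ after rescaling; note only that the walk here is in continuous time with total jump rate $1$, so one should use $p^L_t(0,0)\sim(\boldsymbol{\pi}t)^{-1}$ rather than the discrete-time kernel $p_{2n}(0,0)\sim(\boldsymbol{\pi}n)^{-1}$, though the resulting constant is unchanged. Your potential-kernel estimate for the transient, of order $\log\bigl((2L+1)/a_L\bigr)/\log(2L+1)$, is also the right quantity (it is exactly the atom at $0$ that appears in Cox--Durrett-type results when $\norm{z}\sim L^{\alpha}$ with $\alpha<1$). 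What the paper's route buys is full rigor at zero cost, in particular the uniformity in $z$ and $t\geq 0$, which is precisely the quantitative content of Cox's estimates; what your route buys is visibility of the mechanism and of where each hypothesis enters. Two caveats: first, as you acknowledge, the uniform control of the transient and mixing errors is asserted rather than carried out, so as written this is a plan whose completion would essentially reproduce Cox's proof; second, the claim that the hypotheses are ``tuned exactly'' overstates --- killing the pre-mixing atom only requires $\log(L/a_L)=o(\log L)$, i.e.\ $a_L=L^{1-o(1)}$, whereas $a_L\gg L/\sqrt{\log L}$ gives the much stronger $\log(L/a_L)=O(\log\log L)$, so the $\sqrt{\log L}$ condition is Cox's convenient sufficient hypothesis rather than one that is visibly necessary for the limit statement.
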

This is the special case $d=2$ of Theorem 4 in Cox (1989)\nocite{jC89}.

%
%
\begin{cor}
	\label{kor:vonthm4}
	Let $i\neq j \in [n]$.
	Then we have
	\[\limty[L] \sup_{\pi^{\ell,L} \in [[a_L,\sqrt{2} L]], i \nsim_{\pi^{\ell,L}} j , t \geq 0} \abs{\IP^{\pi^{\ell,L}}
	\left(\frac{\tau^L(i,j)}{s_L} > t\right) - \exp\left(-\boldsymbol{\pi} t\right)} = 0. \]
\end{cor}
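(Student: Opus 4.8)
The plan is to reduce $\tau^L(i,j)$ to the first hitting time of the origin of a single random walk and then to invoke Proposition \ref{thm:coxthm4} after correcting for the walk's speed. First I would note, as was already observed in the proof of Lemma \ref{lemma:meins1}, that up to the meeting time the labels $M^L_t(i)$ and $M^L_t(j)$ of the blocks containing $i$ and $j$ evolve as two independent simple symmetric random walks with kernel $p^L$ on $T^L$. In the coupling via the family $(\xi^L(k))_{k \in [n]}$ this persists across coalescences of either block with a third block before $\tau^L(i,j)$: such a merger merely replaces the walk followed by that block with another member of the independent family $\xi^L$, started at the current (common) location, so the difference $(M^L_t(i)-M^L_t(j))_t$ is, until $\tau^L(i,j)$, a time-homogeneous random walk that jumps at rate $2$, each jump going to a uniformly chosen neighbour. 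Consequently $\tau^L(i,j)$ is the first hitting time of $0$ of a rate-$2$ simple symmetric random walk on $T^L$ started from the separation $z \defeq \zeta_i - \zeta_j$ of the two blocks, and its law depends on $\pi^{\ell,L}$ only through $z$.

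Next I would carry out the time change. Speeding a rate-$1$ walk up to rate $2$ divides its hitting times by $2$, so if $H^L$ denotes the first hitting time of the origin of the rate-$1$ walk $Z^L$ of Proposition \ref{thm:coxthm4} started in $z$, then $\tau^L(i,j) \eqD \tfrac12 H^L$. Hence for every $t \geq 0$,
\[
\IP^{\pi^{\ell,L}}\!\left(\frac{\tau^L(i,j)}{s_L} > t\right) = \IP^{z}\!\left(\frac{H^L}{s_L} > 2t\right).
\]
Finally I would match the suprema and pass to the limit. If $\pi^{\ell,L} \in [[a_L,\sqrt{2}L]]$ and $i \nsim_{\pi^{\ell,L}} j$, then the blocks of $i$ and $j$ are distinct with $\norm{z} = r_L(\zeta_i,\zeta_j) \geq a_L$, so the supremum over all such $\pi^{\ell,L}$ and all $t \geq 0$ of the displayed quantity is bounded by the supremum over $\norm{z} \geq a_L$ and $t \geq 0$ appearing in Proposition \ref{thm:coxthm4}; the substitution $t \mapsto 2t$ is a bijection of $[0,\infty)$ and leaves $\sup_{t \geq 0}$ unchanged. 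Since $(a_L)$ satisfies exactly the hypotheses $\limty[L] L^{-1}\sqrt{\log L}\,a_L = \infty$ and $a_L = o(L)$ of that proposition, it yields
\[
\limty[L]\ \sup_{\norm{z}\geq a_L,\, t \geq 0} \abs{\,\IP^{z}\!\left(\frac{H^L}{s_L} > 2t\right) - \exp\!\left(-\tfrac12\boldsymbol{\pi}\cdot 2t\right)} = 0,
\]
and $\exp(-\tfrac12\boldsymbol{\pi}\cdot 2t) = \exp(-\boldsymbol{\pi} t)$ is the claimed limit.

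The step I expect to require the most care is the first one: justifying that $(M^L_t(i)-M^L_t(j))_t$ is genuinely a time-homogeneous rate-$2$ random walk whose law depends only on the initial separation, despite the relabelling that occurs whenever one of the two blocks coalesces with a third block before $\tau^L(i,j)$. This hinges on the independence of the driving family $(\xi^L(k))_k$ and is the only place where the coupling construction is used; everything after it is a deterministic time change together with a direct appeal to Proposition \ref{thm:coxthm4}.
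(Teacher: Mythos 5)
Your proposal is correct and follows essentially the same route as the paper: the paper likewise observes that $M^L_t(i)-M^L_t(j)$ is, up to $\tau^L(i,j)$, a rate-$2$ simple symmetric random walk on $T^L$ started at a point $z$ with $\norm{z}\geq a_L$, identifies $\tau^L(i,j)$ with the hitting time of the origin, and invokes Proposition \ref{thm:coxthm4}. Your write-up merely makes explicit two points the paper leaves implicit --- the rate-$2$ time change converting $\exp(-\tfrac12\boldsymbol{\pi}\cdot 2t)$ into $\exp(-\boldsymbol{\pi}t)$, and the fact that relabelling at coalescences with third blocks preserves the random-walk structure of the difference process via the independent driving family --- both of which are sound.
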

\begin{proof}
	We know that 
	$M^L_t(i) - M^L_t(j)$ (here the subtraction is done with respect to the cyclic structure of the torus) is up to time $\tau^L(i,j)$ a simple symmetric random walk on $T^L$ with jump rate $2$  and transition probabilities given by
	 $p^L$ that is started in  $z \defeq M^L_0(i) - M^L_0(j) \in T^L$ with $\norm{z} \geq a_L$ due to ${\pi^{\ell,L}} \in [[a_L,\sqrt{2} L]].$
	  The time $\tau^L(i,j)$ is just the hitting time of the origin of this random walk and so the claim follows from Proposition  \ref{thm:coxthm4}.
	\end{proof}

%
%
A more general statement holds for an arbitrary number of blocks that perform a coalescing random walk.
\begin{propn}
	\label{thm:cox3.2}
	Let $\tilde{\tau}^L_1=\tau^L_1$ be the first meeting time of any blocks. 
	Then for any $n \in \IN,$
	\[ \limty[L] \sup_{{\pi^{\ell,L}} \in [[a_L,\sqrt{2} L]] \cap \Ps^{\ell,L}_n, t \geq 0}\abs{\IP^{{\pi^{\ell,L}}}\left(\tau^L_1 > s_L t \right) -  \exp\left(- \boldsymbol{\pi} \binom{\#{\pi^{\ell,L}}}{2} t\right) } = 0. \]
\end{propn}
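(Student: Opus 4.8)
The plan is to reduce the first–meeting time to a minimum of pairwise meeting times and then to establish that these pairwise times are \emph{asymptotically independent} exponentials. Up to time $\tau_1^L$ no coalescence has occurred, so all $b \defeq \#{\pi^{\ell,L}}$ blocks still move as independent random walks with kernel $p^L$; hence
\[
\tau_1^L = \min_{\{i,j\}} \tau^L(i,j),
\]
where the minimum runs over the $\binom{b}{2}$ unordered pairs of (initially distinct) blocks and each $\tau^L(i,j)$ is the hitting time of the origin by the rate-$2$ difference walk $M^L(i)-M^L(j)$. Corollary \ref{kor:vonthm4} already supplies, uniformly in $t\ge0$ and in ${\pi^{\ell,L}}\in[[a_L,\sqrt{2}L]]$, the single-pair limit $\IP^{{\pi^{\ell,L}}}(\tau^L(i,j)>s_L t)\to e^{-\boldsymbol{\pi} t}$. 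Were the $\binom{b}{2}$ meeting times independent, the minimum would be exponential with the summed rate $\boldsymbol{\pi}\binom{b}{2}$, which is precisely the asserted limit; thus the entire content of the proposition is the asymptotic independence of the meeting events.

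To make this precise I would set $A_{ij}\defeq\{\tau^L(i,j)\le s_L t\}$, write $\IP^{{\pi^{\ell,L}}}(\tau_1^L\le s_L t)=\IP^{{\pi^{\ell,L}}}(\bigcup_{\{i,j\}}A_{ij})$, and expand by inclusion--exclusion. The claim then follows once one shows that for \emph{every} family $S$ of pairs
\[
\IP^{{\pi^{\ell,L}}}\Bigl(\bigcap_{\{i,j\}\in S}A_{ij}\Bigr)\longrightarrow \bigl(1-e^{-\boldsymbol{\pi} t}\bigr)^{|S|}
\]
uniformly in $t$ and in ${\pi^{\ell,L}}\in[[a_L,\sqrt{2}L]]$, since by inclusion--exclusion the complementary probability then converges to $\sum_{m=0}^{\binom{b}{2}}\binom{\binom{b}{2}}{m}(-1)^m(1-e^{-\boldsymbol{\pi} t})^m=e^{-\boldsymbol{\pi}\binom{b}{2}t}$. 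For a family $S$ consisting of mutually \emph{disjoint} pairs this factorization is exact, since the corresponding difference walks then involve disjoint, and therefore independent, collections of the underlying walks, and each factor is controlled by Corollary \ref{kor:vonthm4}.

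The one genuine obstacle is the factorization for families containing pairs that \emph{share} a block, such as $(i,j)$ and $(i,k)$, whose meeting times are coupled through the common walk $M^L(i)$. To decouple them I would exploit the separation of time scales built into $s_L=(2L+1)^2\log(2L+1)$, whose order exceeds the mixing time $\asymp (2L+1)^2$ of the walk on $T^L$ by the factor $\log(2L+1)$. Fixing an intermediate scale $r_L$ with $r_L\to\infty$, $r_L=o(s_L)$ and $r_L\gg(2L+1)^2$ (e.g.\ $r_L=(2L+1)^2\sqrt{\log(2L+1)}$), one checks that no pair meets during $[0,r_L]$ with probability tending to $1$ (by the single-pair bound, since $r_L/s_L\to0$), while by time $r_L$ the surviving walks have mixed to near-uniformity and, being independent, are jointly close to independent uniform variables lying in $[[a_L,\sqrt{2}L]]$ with high probability (here $a_L=o(L)$ enters). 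The remaining and essential point is that, on the $s_L$ scale, simultaneous or near-simultaneous meetings of overlapping pairs are negligible --- that is, triple encounters of three blocks contribute vanishing probability --- so that the meetings behave like a superposition of asymptotically independent rare events. This is exactly the coalescing-random-walk estimate of Cox (1989), and the desired joint factorization (equivalently, the joint convergence of the rescaled meeting times to independent exponentials) may be quoted from Theorem 3.2 there and combined with the uniform single-pair statement of Corollary \ref{kor:vonthm4}; the uniformity in $t$ and in the starting configuration is inherited at each step from the uniform single-pair limit. I expect this triple-encounter/decorrelation estimate to be the main technical hurdle, while everything else is bookkeeping via inclusion--exclusion and the independence of disjoint walk families.
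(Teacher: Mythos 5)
Your proposal is correct and lands in essentially the same place as the paper: the paper gives no independent proof either, handling $\#{\pi^{\ell,L}}=2$ via Corollary \ref{kor:vonthm4} and simply citing (3.2) of Cox (1989) for $\#{\pi^{\ell,L}}>2$, which is exactly the joint factorization you ultimately quote at your one genuine obstacle (note it is equation (3.2) in Cox, not a ``Theorem 3.2''). Your inclusion--exclusion scaffolding with the triple-encounter decorrelation estimate is moreover a faithful sketch of Cox's own method --- his near-encounter bounds are the estimates (3.7)--(3.8) that the paper quotes separately as Lemma \ref{lem:cox3.7und3.8} --- so once you invoke Cox's result the remaining bookkeeping is redundant rather than wrong.
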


For $\#{\pi^{\ell,L}}=2$  this is the statement of Corollary \ref{kor:vonthm4}, for $\#{\pi^{\ell,L}}>2$ see (3.2) in Cox (1989)\nocite{jC89}.

\begin{lemma}
	\label{lem:cox3.7und3.8}
	Let $n \in \IN$ and let $K^L_C \subseteq \Ps^{\ell,L}_n$  for  $C \subseteq [n]$ be those labeled partitions $\pi^{\ell,L},$
	for which the blocks $\{A_0^L(i)\}_{i \in C}$ are pairwise different.
	Let  $T> 0$ and $i,j,k,l \in [n]$ be pairwise disjoint. 
	Then we have the following statements:
	\begin{enumerate}
	\item[(i)] \label{enum:cox3.7} 
			For $ I^L_{{\pi^{\ell,L}}} \defeq 	
				\int_0^{T s_L} \IP^{\pi^{\ell,L}} \Bigl( \tilde{\tau}^L_1 = \tilde{\tau}^L(i,j) \in \diff u, r_L\bigl(\tilde{M}^L_u(i), \tilde{M}^L_u(k)\bigr) \leq a_L \Bigr)$
			we have
			\[\limty[L] \sup_{{\pi^{\ell,L}} \in [[a_L, \sqrt2 L]] \cap K^L_{\{i,j,k\}}} I^L_{{\pi^{\ell,L}}} = 0. \]
	\item[(ii)] \label{enum:cox3.8} For 
			$J^L_{{\pi^{\ell,L}}} 
				\defeq	\int_0^{T s_L} \IP^{\pi^{\ell,L}} \Bigl( \tilde{\tau}^L_1 = \tilde{\tau}^L(i,j) \in \diff u, r_L\bigl(\tilde{M}^L_u(k), \tilde{M}^L_u(l)\bigr) \leq a_L \Bigr)$
			we have
			\[\limty[L] \sup_{{\pi^{\ell,L}} \in [[a_L, \sqrt2 L]]\cap K^L_{ \{i,j,k,l\}}   } J^L_{{\pi^{\ell,L}}} = 0, \]
	\end{enumerate}
\end{lemma}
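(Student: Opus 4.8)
The plan is to prove both estimates by comparing the coalescing random walk $\tilde{\Pi}^{\ell,L}$ with a family of \emph{independent} random walks and then reducing each quantity to a product of an on-diagonal heat-kernel bound and a meeting-time tail, exactly in the spirit of (3.7)--(3.8) of Cox (1989). First I would note that on the event $\{\tilde{\tau}_1^L = \tilde{\tau}^L(i,j)\}$ no coalescence has occurred up to the first meeting, so the blocks containing $i,j,k$ (resp. $i,j,k,l$) move as independent random walks with kernel $p^L$ throughout $[0,\tilde{\tau}^L(i,j)]$; since $\pi^{\ell,L}\in K^L_{\{i,j,k\}}$ these starting blocks are genuinely distinct. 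As I only need an upper bound tending to $0$, I would drop the restriction that $(i,j)$ is the first pair to meet and estimate, for part (i),
\begin{equation*}
I^L_{\pi^{\ell,L}} \le \int_0^{T s_L} \IP^{\pi^{\ell,L}}\bigl(\tau^L(i,j)\in \diff u,\ r_L(M^L_u(i),M^L_u(k))\le a_L\bigr),
\end{equation*}
where $M^L(i),M^L(j),M^L(k)$ now denote genuinely independent walks (they agree with the coalescing versions up to the meeting time, which is all that enters).

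The key factorization is that $M^L(k)$ is independent of the pair $(M^L(i),M^L(j))$, so conditionally on $\tau^L(i,j)=u$ and on the meeting location $Y=M^L_u(i)$ the position $M^L_u(k)$ has law $p^L_u(\zeta_k,\cdot)$ and is independent of $Y$. Writing $g_u(y):=\IP(M^L_u(k)\in B(y,a_L))$ for the ball $B(y,a_L)=\set{z\in T^L \Mid r_L(y,z)\le a_L}$, this gives
\begin{equation*}
I^L_{\pi^{\ell,L}} \le \int_0^{T s_L} \Bigl(\sup_{y\in T^L} g_u(y)\Bigr)\,\IP^{\pi^{\ell,L}}\bigl(\tau^L(i,j)\in \diff u\bigr).
\end{equation*}
Since $a_L=o(L)$ the ball contains $O(a_L^2)$ sites, so the standard on-diagonal bound $\sup_z p^L_u(\zeta_k,z)\le C/(u\wedge L^2)$ for the simple random walk on $T^L$ yields $\sup_y g_u(y)\le C a_L^2/(u\wedge L^2)$, uniformly in $\zeta_k$ by translation invariance.

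The decisive structural point is the separation of scales $L^2\ll s_L=(2L+1)^2\log(2L+1)$ together with $a_L^2=o(L^2)$, and I would split the integral at $u=L^2$. For $u\ge L^2$ the factor is $\le C a_L^2/L^2\to 0$ by $L^{-1}a_L\to 0$, while the remaining mass is $\le 1$. For $u\le L^2$ I would use the trivial bound $g_u\le 1$ together with Corollary \ref{kor:vonthm4}: since $L^2/s_L=1/\log(2L+1)\to 0$, the exponential tail forces $\IP^{\pi^{\ell,L}}(\tau^L(i,j)\le L^2)\to 0$ uniformly over $\pi^{\ell,L}\in[[a_L,\sqrt2 L]]$ with $i\nsim j$ (this is where the hypothesis $L^{-1}\sqrt{\log L}\,a_L\to\infty$ is consumed, as it is exactly the condition Corollary \ref{kor:vonthm4} requires). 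Both contributions vanish uniformly, proving (i).

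For part (ii) the only change is that $M^L(k)$ and $M^L(l)$ are both independent of $(M^L(i),M^L(j))$ and of each other, so conditionally on $\tau^L(i,j)=u$ the difference $M^L_u(k)-M^L_u(l)$ is a rate-$2$ walk run for time $u$ from $\zeta_k-\zeta_l$, where $\norm{\zeta_k-\zeta_l}\ge a_L$; the same heat-kernel estimate bounds $\IP(r_L(M^L_u(k),M^L_u(l))\le a_L)\le C a_L^2/(u\wedge L^2)$ and the identical two-regime split concludes. I expect the main obstacle to be purely quantitative: arranging the on-diagonal heat-kernel bound and the meeting-time tail to hold \emph{simultaneously and uniformly} over all admissible starting configurations, so that the two competing limits $a_L^2/L^2\to 0$ and $\IP^{\pi^{\ell,L}}(\tau^L(i,j)\le L^2)\to 0$ can be combined in a single estimate. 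Everything else --- dropping the first-meeting restriction and the independence factorization --- is soft, and the overall structure mirrors Cox's (3.7)--(3.8) specialised to $d=2$.
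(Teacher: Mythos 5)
Your proposal is correct, but it takes a genuinely different route from the paper. The paper's entire proof is a reduction plus a citation: it observes that the block structure is irrelevant for the two estimates, so one may take $\tilde{\Pi}^{\ell,L}_0$ to consist of singletons, and then the two statements are literally (3.7) and (3.8) of Cox (1989) for coalescing random walks. You instead reprove these estimates from scratch, and your argument is sound: on $\{\tilde{\tau}^L_1 = \tilde{\tau}^L(i,j)\}$ no coalescence has occurred before the first meeting, so the labels of the distinct initial blocks containing $i,j,k$ (resp.\ $i,j,k,l$) are carried by the independent walks $\xi^L(\min(\cdot))$ of the paper's coupling, which legitimizes dropping the first-meeting restriction and passing to independent walks; the factorization through $g_u$ is valid because $\{\tau^L(i,j)\in \diff u\}$ and the meeting location are measurable with respect to the $(i,j)$-walks alone; and the two-regime split at $u = L^2$ works precisely because $s_L/L^2 \sim \log(2L+1) \to \infty$, so that Corollary \ref{kor:vonthm4} (whose hypotheses on $(a_L)$ match those in force, and which applies since the law of $\tau^L(i,j)$ before the first meeting is that of the hitting time of $0$ by a rate-$2$ walk started at distance at least $a_L$) kills the mass on $[0,L^2]$ uniformly, while $a_L^2/L^2 \to 0$ kills the on-diagonal contribution on $[L^2, Ts_L]$. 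What each approach buys: the paper's citation is economical and inherits Cox's rigor wholesale, whereas your derivation makes the $d=2$ mechanism transparent --- it is exactly the logarithmic gap between the diffusive scale $L^2$ and the meeting scale $s_L$ that makes both error terms vanish --- and it would extend to other symmetric walks. The one ingredient you assert rather than prove is the uniform on-diagonal bound $\sup_z p^L_u(x,z) \leq C\bigl((1+u)\wedge L^2\bigr)^{-1}$ on the torus; this is standard (monotonicity of $\sup_z p^L_u(x,\cdot)$ in $u$ plus the $\IZ^2$ kernel bound and wrapping), but it is not among the results quoted in the paper (Proposition \ref{prop:cox2.8} only covers times $u \gg L^2$), so a complete write-up would need to supply it.
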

\begin{proof}
	Since the structure of the blocks is irrelevant for the statement it suffices to consider $\tilde{\Pi}_0^{\ell,L} ={\pi^{\ell,L}}= \set{\set{i}\Mid i \in [n]}.$ 
	The two statements for the coalescing random walk $\tilde{\Pi}^{\ell,L}$ are now (3{.}7) und (3{.}8) in Cox (1989)\nocite{jC89}.
\end{proof}

The next result states that the probability for the event $B^L(q,{\pi^{\ell,L}})$ of (\ref{B_Ldef}) tends to $1$ as $L$ is large and $q$ is small, provided the blocks are sampled far enough apart. 
\begin{propn}
	\label{prop:Bistwahrscheinlich}
	For any $\epsilon > 0$ there is a $q(\epsilon) > 0$ and $L(\epsilon) \in \N$ such that for all $L \geq L(\epsilon)$ and all ${\pi^{\ell,L}} \in [[a_L, \sqrt{2}L]]$ we have
	\[\IP^{\pi^{\ell,L}}\Bigl(B^L\bigl(q(\epsilon),{\pi^{\ell,L}}\bigr)\Bigr) \geq 1 - \epsilon.\]
\end{propn}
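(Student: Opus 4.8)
The plan is to prove the statement by induction on the number of blocks $m := \#{\pi^{\ell,L}}$, decomposing $B^L$ according to the \emph{first} coalescence event of $\tilde{\Pi}^{\ell,L}$ and iterating via the strong Markov property. For $m \in \{1,2\}$ nothing is to be shown, since then $B^L(q,{\pi^{\ell,L}}) = \Omega$ and $q,L$ may be chosen arbitrarily. For the inductive step I assume the statement holds for $m-1$ blocks (with $q,L$ depending on the prescribed error probability) and fix $\epsilon > 0$. I would split the complement of $B^L$ into the failure of the condition at $k=1$, namely $\{\tilde{\tau}^L_2 - \tilde{\tau}^L_1 \leq s_L q\} \cup \{\tilde{\tau}^{L,(2)}_2 - \tilde{\tau}^L_1 \leq s_L q\}$, and the failure of the conditions for $k \geq 2$, and bound each of the three contributions by $\epsilon/3$.

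The central sub-step is to show that with probability close to $1$ the configuration of $\tilde{\Pi}^{\ell,L}$ at the first coalescence time $\tilde{\tau}^L_1$ again lies in $[[a_L,\sqrt{2}L]]$; call this event $E^L$. (The upper bound $\sqrt{2}L$ is automatic, being the diameter of $T^L$ in the metric $r_L$, so only the pairwise lower bounds $\geq a_L$ are at issue.) To bound $\IP^{\pi^{\ell,L}}((E^L)^c)$ I would first truncate time: by Proposition \ref{thm:cox3.2} one has $\IP^{\pi^{\ell,L}}(\tilde{\tau}^L_1 > s_L T) \to \exp(-\boldsymbol{\pi}\binom{m}{2}T)$ uniformly in the starting configuration, so a sufficiently large $T$ renders this negligible. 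On $\{\tilde{\tau}^L_1 \leq s_L T\}$, failure of $E^L$ means that at the meeting time either the newly merged block lies within distance $a_L$ of a third block, or two of the remaining blocks lie within distance $a_L$ of one another; summing over the finitely many index triples and quadruples, these probabilities are exactly the quantities $I^L_{{\pi^{\ell,L}}}$ and $J^L_{{\pi^{\ell,L}}}$ of Lemma \ref{lem:cox3.7und3.8}, which tend to $0$ uniformly. Hence $\IP^{\pi^{\ell,L}}((E^L)^c) < \epsilon/3$ for $L$ large.

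On $E^L$ I would invoke the strong Markov property and restart at $\tilde{\tau}^L_1$ from a configuration in $[[a_L,\sqrt{2}L]]$ with $m-1$ blocks. For the condition at $k=1$, Proposition \ref{thm:cox3.2} gives that $\IP(\tilde{\tau}^L_2 - \tilde{\tau}^L_1 \leq s_L q \mid E^L)$ is asymptotically at most $1 - \exp(-\boldsymbol{\pi}\binom{m-1}{2}q)$, which falls below $\epsilon/6$ once $q$ is small; since the swapped process has the same law as $\tilde{\Pi}^{\ell,L}$ and agrees with it up to $\tilde{\tau}^L_1$, the identical bound applies to $\tilde{\tau}^{L,(2)}_2 - \tilde{\tau}^L_1$, so the $k=1$ contribution stays below $\epsilon/3$. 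For the conditions with $k \geq 2$, the restarted process is again a coalescing random walk started in $[[a_L,\sqrt{2}L]]$ with $m-1$ blocks, and the events in question are precisely $B^L(q,\cdot)$ for this restarted process, its merges being the original merges $2,3,\dots$ together with the corresponding swapped versions. Since decreasing $q$ enlarges $B^L$, I would take $q$ below the threshold furnished by the inductive hypothesis at error level $\epsilon/3$, so this contribution is also below $\epsilon/3$. Summing the three bounds and then taking the minimum of the resulting thresholds $q$ and the maximum of the thresholds $L$ over the finitely many block counts $m \in \{1,\dots,n\}$ yields the claim.

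The main obstacle is the central sub-step: controlling the spatial configuration at the coalescence time so that the surviving blocks remain pairwise at distance $\geq a_L$. This is precisely what Lemma \ref{lem:cox3.7und3.8} delivers, and the delicate points are the finite-horizon truncation needed to bring the integral bounds $I^L,J^L$ to bear, and the uniformity of all estimates over the starting configuration in $[[a_L,\sqrt{2}L]]$, without which the strong-Markov restart at the random post-coalescence configuration could not be iterated.
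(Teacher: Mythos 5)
Your proposal is correct and follows essentially the same route as the paper's proof: an induction with a strong-Markov restart at the first meeting/coalescence time, using Proposition \ref{thm:cox3.2} for the time truncation and the gap lower bounds, Lemma \ref{lem:cox3.7und3.8} to show the surviving blocks are again in $[[a_L,\sqrt{2}L]]$ at the merge time, the distributional equality between the original and swapped gaps, and uniformity over starting configurations to justify restarting at a random configuration. The only (cosmetic) differences are that the paper organizes the induction over the meeting index $k$ proving the separation and gap statements jointly at each level, and obtains the gap bound via Corollary \ref{kor:vonthm4} with a union bound over pairs rather than directly from Proposition \ref{thm:cox3.2}.
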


\begin{proof} 
	We will show the following statements by induction over $k \in [\#\pi^{\ell,L}-1]$ for any $\epsilon>0.$
	There is a $q(\epsilon) > 0$ and $L(\epsilon) \in \N$ such that for all $L \geq L(\epsilon)$ and all ${\pi^{\ell,L}} \in [[a_L, \sqrt{2}L]]$ with $\#{\pi^{\ell,L}} \geq k$ we have 
			\begin{equation}
			\label{enum:Bistwahrscheinlich2} \IP^{\pi^{\ell,L}}(\tilde{\Pi}^{\ell,L}_{\tilde{\tau}_k^L} \in [[a_L, \sqrt{2}L]]) \geq1- \epsilon,
			 \end{equation}			
			\begin{equation} 
			\label{enum:Bistwahrscheinlich4} \IP^{\pi^{\ell,L}}\bigl( \tilde{\tau}^L_{k} - \tilde{\tau}^L_{k-1} > s_L q(\epsilon)\bigr) \geq 1- \epsilon. 
			\end{equation}
	In words, these events say that at the time $\tilde{\tau}_k^L$, the blocks are again far apart with high probability and the probability of meeting times being very close together is small.
	The proposition follows then from (\ref{enum:Bistwahrscheinlich4}) for all $k \in [\#\pi^{\ell,L}-1]$ since $\tilde{\tau}^L_{k} - \tilde{\tau}^L_{k-1}$ and $\tilde{\tau}^{L,(2)}_{k} - \tilde{\tau}^L_{k-1}$ have the same distribution.

	Note that since in $\tilde{\Pi}^{\ell,L}$ almost surely only pairwise mergers occur, all of the $\tilde{\tau}^L_k$ above are actual jump times of the process.

	We start with $k=1$ and first show (\ref{enum:Bistwahrscheinlich2}).
	From Proposition \ref{thm:cox3.2} it follows that there is an $L(\epsilon) \in \N$ and a $T(\epsilon)>0$ such that 
		\[\IP^{\pi^{\ell,L}}\bigl(\tilde{\tau}^L_1 > s_L T(\epsilon)\bigr) < \frac\epsilon2\]
	for all $L \geq L(\epsilon)$ and all ${\pi^{\ell,L}} \in [[a_L, \sqrt2 L]]$ with $\#{\pi^{\ell,L}} \geq 2$. According to Lemma \ref{lem:cox3.7und3.8}, by choosing $L(\epsilon)$ larger if necessary,
	\begin{equation*}
	\IP^{\pi^{\ell,L}}(\tilde{\tau}^L_1 \in [0,s_L T(\epsilon)],  \tilde{\Pi}^{\ell,L}_{\tilde{\tau}^L_1} \notin [[a_L, \sqrt{2} L]]) < \frac\epsilon2\\
	\end{equation*}
	for all $L \geq L(\epsilon)$ and all ${\pi^{\ell,L}} \in [[a_L, \sqrt2 L]]$ with $\#{\pi^{\ell,L}} \geq 2$.
	 Thus, for those $L$ and ${\pi^{\ell,L}}$
	\begin{align*}
		&\IP^{\pi^{\ell,L}}\big(\tilde{\Pi}^{\ell,L}_{\tilde{\tau}_1^L} \notin [[a_L, \sqrt{2}L]]\big) \\
		&\leq \IP^{\pi^{\ell,L}}\bigl(\tilde{\tau}^L_1 \in [0,s_L T(\epsilon)],  \tilde{\Pi}^{\ell,L}_{\tilde{\tau}^L_1} \notin  [[a_L, \sqrt{2} L]]\bigr) + \IP^{\pi^{\ell,L}}\bigl(\tilde{\tau}^L_1 > s_L T(\epsilon)\bigr) < \epsilon.
	\end{align*}

	In order to show (\ref{enum:Bistwahrscheinlich4}) we note that by Corollary \ref{kor:vonthm4} we can choose 
	$q(\epsilon)>0$ small enough and make $L(\epsilon) \in \N$ larger if necessary such that for all $L \geq L(\epsilon)$ and all ${\pi^{\ell,L}} \in [[a_L, \sqrt2 L]]$ with $\#{\pi^{\ell,L}} \geq 2$ 
	\begin{equation}
	\label{choice_of_q(eps)}
	\abs{\IP^{\pi^{\ell,L}}\left( s_L^{-1} \tau^L(i,j)  \leq q(\epsilon)\right)} < \frac\epsilon{n^2}.
	\end{equation}
	With $C^L(0)$ as in (\ref{C^L(0)}) it follows that 
	\begin{align*} 
		\IP^{\pi^{\ell,L}}\bigl(\tilde{\tau}^L_{1} - \tilde{\tau}^L_{0} \leq s_L q(\epsilon)\bigr) &\leq \sum_{(i,j) \in C^L(0)} \IP^{\pi^{\ell,L}}\bigl(s_L^{-1} \tilde{\tau}^L_{1} = s_L^{-1} \tau^L(i,j) \leq q(\epsilon)\bigr)\\
			&\leq  \sum_{(i,j) \in C^L(0)} \IP^{\pi^{\ell,L}}\bigl(s_L^{-1} \tau^L(i,j) \leq q(\epsilon)\bigr) 
			< \sum_{(i,j) \in C^L(0)} \frac\epsilon{n^2} \leq \epsilon.
	\end{align*}
Let the claim now be true for $k \in [n-2]$. Since (\ref{enum:Bistwahrscheinlich2}) holds for $k$ we can assume 
\[\tilde{\Pi}^{\ell,L}_{\tilde{\tau}_k^L} \in [[a_L, \sqrt{2}L]].\]
Using the strong Markov property of $\tilde{\Pi}^{\ell,L}$ we can restart the process at time $\tilde{\tau}_k^L.$ Since the blocks are again well seperated the induction step now follows analogous to the $k=1$ proof.
Note that the random starting point does not pose a problem since we have uniform results (or alternatively because $\Ps_n^L$ is finite).
\end{proof}

We will need the following properties of a random walk on the torus, which states that asymptotically the random walk will be uniformly distributed.
\begin{propn}
	\label{prop:cox2.8}
	Let $Z^L$ be a simple symmetric random walk on $T^L$ with transition kernel $p^L$.
	Let $(t_L)_{L \in \N}$ be a sequence with  $\limty[L] t_L = \infty$.
	Then
	\[\limty[L] \sup_{t \geq t_L (2L+1)^2} \sup_{x \in T^L} (2L+1)^2 \abs{p_t^L(x,0) - (2L+1)^{-2}} = 0\]
\end{propn}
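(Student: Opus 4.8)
The plan is to diagonalize the transition kernel by Fourier analysis on the torus. Since the walk jumps to each nearest neighbour at rate $\frac14$, its generator is diagonalized by the characters $\chi_k(x)=\exp\bigl(\tfrac{2\pi i}{2L+1}\,k\cdot x\bigr)$, $k\in T^L$, with eigenvalues
\[
\lambda_k=\tfrac12\Bigl(\cos\tfrac{2\pi k_1}{2L+1}+\cos\tfrac{2\pi k_2}{2L+1}-2\Bigr)\le 0 .
\]
Expanding the heat kernel in this basis gives
\[
p_t^L(x,0)=\frac{1}{(2L+1)^2}\sum_{k\in T^L} e^{\lambda_k t}\,\chi_k(x),
\]
and the unique mode with $\lambda_k=0$, namely $k=0$, contributes exactly $(2L+1)^{-2}$, i.e. the uniform value. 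First I would subtract this term and use $\abs{\chi_k(x)}=1$ to obtain, uniformly in $x\in T^L$,
\[
(2L+1)^2\,\abs{p_t^L(x,0)-(2L+1)^{-2}}\le\sum_{k\in T^L\setminus\{0\}}e^{\lambda_k t}.
\]

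Next I would bound the eigenvalues by a quadratic in $k$. Using $1-\cos\theta=2\sin^2(\theta/2)\ge\frac{2}{\pi^2}\theta^2$ for $\abs{\theta}\le\pi$ (Jordan's inequality) and noting that $\abs{2\pi k_j/(2L+1)}<\pi$ for $k_j\in\{-L,\dots,L\}$, one gets
\[
-\lambda_k\ge\frac{1}{\pi^2}\Bigl(\bigl(\tfrac{2\pi k_1}{2L+1}\bigr)^2+\bigl(\tfrac{2\pi k_2}{2L+1}\bigr)^2\Bigr)=\frac{4(k_1^2+k_2^2)}{(2L+1)^2}.
\]
Because every $\lambda_k$ is nonpositive, $t\mapsto e^{\lambda_k t}$ is nonincreasing, so the supremum over $t\ge t_L(2L+1)^2$ is attained at the left endpoint. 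Substituting $t=t_L(2L+1)^2$ cancels the factor $(2L+1)^2$ and yields
\[
\sup_{t\ge t_L(2L+1)^2}\ \sum_{k\in T^L\setminus\{0\}}e^{\lambda_k t}\le\sum_{k\in T^L\setminus\{0\}}e^{-4t_L(k_1^2+k_2^2)}.
\]

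Finally I would decouple the estimate from $L$ by enlarging the index set to all of $\IZ^2\setminus\{0\}$, which only increases the sum. Since the summand factorizes,
\[
\sum_{k\in\IZ^2\setminus\{0\}}e^{-4t_L(k_1^2+k_2^2)}=\Bigl(\sum_{m\in\IZ}e^{-4t_L m^2}\Bigr)^2-1,
\]
and $\sum_{m\in\IZ}e^{-4t_L m^2}\to 1$ as $t_L\to\infty$, so this bound tends to $0$. As the resulting estimate is independent of both $x$ and $L$ and enters through $t_L\to\infty$ only, the claimed uniform convergence follows. The argument is essentially a Gaussian tail estimate, and the only mildly delicate point is making the quadratic eigenvalue bound valid up to the corner frequencies $k_j=\pm L$; this is exactly why one needs $\abs{2\pi k_j/(2L+1)}<\pi$ rather than $\le\pi$, and where the specific side length $2L+1$ (odd, avoiding the antipodal frequency $\theta=\pi$) is convenient. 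Getting all three suprema (over $x$, over $t$, and the limit in $L$) simultaneously uniform is the crux, and the spectral representation handles each cleanly.
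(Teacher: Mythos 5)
Your proof is correct, but it cannot be ``the same as the paper's'' for a simple reason: the paper does not prove this proposition at all --- it is quoted verbatim as estimate (2.8) from Cox (1989), with no argument supplied. What you have written is therefore a self-contained replacement for an external citation, and it is essentially the classical spectral argument: diagonalize the rate-$\tfrac14$ nearest-neighbour generator by the characters $\chi_k$, note that the $k=0$ mode contributes exactly the uniform value $(2L+1)^{-2}$, bound the remaining modes by $\abs{\chi_k(x)}=1$, and convert the eigenvalue gap $-\lambda_k \geq 4(k_1^2+k_2^2)/(2L+1)^2$ (via $1-\cos\theta\geq \tfrac{2}{\pi^2}\theta^2$ on $\abs\theta\leq\pi$) into a Gaussian-type sum $\bigl(\sum_m e^{-4t_Lm^2}\bigr)^2-1\to0$. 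All three uniformities are handled cleanly: in $x$ because characters have unit modulus, in $t$ because $\lambda_k\leq0$ makes each summand nonincreasing so the supremum sits at $t=t_L(2L+1)^2$ (where the $(2L+1)^2$ cancels), and in $L$ because enlarging the index set to $\IZ^2\setminus\{0\}$ leaves a bound depending on $t_L$ alone. Two small remarks: the comparison $1-\cos\theta\geq\tfrac{2}{\pi^2}\theta^2$ actually holds with equality at $\abs\theta=\pi$, so the strict inequality $\abs{2\pi k_j/(2L+1)}<\pi$ afforded by the odd side length is a convenience rather than a necessity; and one should note in passing that $p^L_t(x,0)$ is real (the $k$ and $-k$ terms are conjugate), though your bound via moduli does not require this. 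In short: a correct, complete proof of a statement the paper only cites, by the same Fourier route Cox himself uses.
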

This is  (2{.}8) in Cox (1989)\nocite{jC89}.
From Proposition \ref{prop:cox2.8} we obtain the asymptotic exchangeability of the blocks of the coalescing random walk.
\begin{lemma}
	\label{lem:raeumlaust}
	Let $\pi_0 \in \Ps_n$ be a partition with $\#\pi_0 \geq 2$ and for $L \in \N$ large enough let $\pi^{\ell,L} \in [[a_L, \sqrt2 L]]$ be a labeled partition, whose partition structure equals $\pi_0,$ meaning that  $\pi^L=\pi_0.$
	Furthermore, let $\sigma$ 
	be a  permutation of $[\#\pi_0]$ and $\pi^{\sigma,\ell,L}$ the labeled partition that is obtained from $\pi^{\ell,L}$ by permuting the labels of the block with $\sigma.$ 
	We define for  $L \in \N$ 
	\[q_L \defeq (\log\log L)(2L+1)^2.\]
	Let $k \in [\#\pi_0 - 1]$, then there exists a sequence $(\delta_L)_{L \in \N}$, independent of $\pi^{\ell,L}$ and $\sigma,$ with $\delta_L \konv[L] 0$ and
	\[\abs{\IE^{\pi^{\ell,L}}\left(f\left(\tilde{\Pi}^{\ell,L}_{q_L}\right)\right) - \IE^{\pi^{\sigma,\ell, L}}\left(f\left(\tilde{\Pi}^{\ell,L}_{q_L}\right)\right)} < \delta_L,\]
	for any measurable and bounded $f \colon \Ps_n^{\ell,L} \to \R.$
\end{lemma}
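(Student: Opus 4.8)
The plan is to exploit the fact that $q_L=(\log\log L)(2L+1)^2$ sits in a two-scale window: on one hand $q_L/(2L+1)^2=\log\log L\to\infty$, so by Proposition \ref{prop:cox2.8} every block-walk equilibrates to the uniform law on $T^L$ by time $q_L$; on the other hand $q_L/s_L=\log\log L/\log(2L+1)\to 0$, so by Corollary \ref{kor:vonthm4} no two blocks have met by time $q_L$ with probability tending to one. Write $b\defeq\#\pi_0\ge 2$. First I would note that $\pi^{\sigma,\ell,L}\in[[a_L,\sqrt2 L]]$ as well, since permuting the labels only permutes the (unchanged) multiset of pairwise torus distances. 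Let $N^L$ be the event that none of the $b$ block-walks have met up to time $q_L$. A union bound over the at most $\binom{b}{2}$ pairs together with the uniform statement of Corollary \ref{kor:vonthm4} (applied at $t=q_L/s_L\to 0$) produces a sequence $\eta_L\to 0$ with $\sup_{\pi^{\ell,L}\in[[a_L,\sqrt2 L]]}\IP^{\pi^{\ell,L}}\bigl((N^L)^c\bigr)\le\eta_L$, and the same bound for the $\sigma$-configuration.

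Next I would reduce to free walks. Since before the first meeting the coalescing walk $\tilde\Pi^{\ell,L}$ coincides with the $b$ independent driving walks $Z^i\defeq\xi^L(\min B_i)$ (started from the labels $\zeta_1,\dots,\zeta_b$ of $\pi^{\ell,L}$), on $N^L$ the state $\tilde\Pi^{\ell,L}_{q_L}$ is the labeled partition with structure $\pi_0$ and block $B_i$ located at $Z^i_{q_L}$. Defining $h(y_1,\dots,y_b)$ to be the value of $f$ on that labeled partition, so $\norm{h}_\infty\le\norm{f}_\infty$, and using that $N^L$ equals the non-meeting event for the free walks, I would establish
\[ \Bigl|\IE^{\pi^{\ell,L}}\bigl(f(\tilde\Pi^{\ell,L}_{q_L})\bigr)-\IE\bigl(h(Z^1_{q_L},\dots,Z^b_{q_L})\bigr)\Bigr|\le 2\norm{f}_\infty\,\eta_L, \]
where under $\IE$ the $Z^i$ are free walks from $(\zeta_1,\dots,\zeta_b)$, and the analogous estimate for $\pi^{\sigma,\ell,L}$ with free walks from $(\zeta_{\sigma(1)},\dots,\zeta_{\sigma(b)})$. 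It then remains to compare the two laws of $(Z^1_{q_L},\dots,Z^b_{q_L})$.

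Here the mixing step enters. By Proposition \ref{prop:cox2.8} with $t_L=\log\log L\to\infty$, the law of a single walk at time $q_L$ lies within total variation $\epsilon_L\to 0$ of the uniform law $U$ on $T^L$, uniformly in the starting point; by independence the joint law of $(Z^1_{q_L},\dots,Z^b_{q_L})$ lies within total variation $b\epsilon_L$ of the product $U^{\otimes b}$, uniformly in the $b$ starting points. Since $U^{\otimes b}$ is invariant under permuting coordinates, and the two starting configurations are permutations of one another, both joint laws are within $b\epsilon_L$ of the \emph{same} measure $U^{\otimes b}$, hence within $2b\epsilon_L$ of each other. Integrating $h$ against these and combining with the two displayed estimates gives
\[ \Bigl|\IE^{\pi^{\ell,L}}\bigl(f(\tilde\Pi^{\ell,L}_{q_L})\bigr)-\IE^{\pi^{\sigma,\ell,L}}\bigl(f(\tilde\Pi^{\ell,L}_{q_L})\bigr)\Bigr|\le\bigl(4\eta_L+2b\epsilon_L\bigr)\norm{f}_\infty, \]
so that $\delta_L\defeq(4\eta_L+2b\epsilon_L)\norm{f}_\infty$, which is independent of $\pi^{\ell,L}$ and $\sigma$ and tends to $0$, is the required sequence.

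The main obstacle is the interplay that pins down the choice of $q_L$: equilibration in Proposition \ref{prop:cox2.8} needs time $\gg(2L+1)^2$, while the no-meeting estimate needs time $\ll s_L=(2L+1)^2\log(2L+1)$, and $q_L=(\log\log L)(2L+1)^2$ is precisely what fits this window. A related delicate point is that $N^L$ is a path event rather than a function of the terminal positions, so one cannot directly apply a total-variation bound on the terminal laws; the remedy, as above, is to peel off $N^L$ first—using that coalescing and free walks agree before the first meeting—and only then invoke mixing. The uniformity in $\pi^{\ell,L}$ that yields a single sequence $\delta_L$ is inherited from the uniform statements in Corollary \ref{kor:vonthm4} and Proposition \ref{prop:cox2.8}, and the index $k$ plays no role beyond ensuring $b=\#\pi_0\ge 2$.
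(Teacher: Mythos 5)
Your proof is correct and takes essentially the same route as the paper's: both peel off the event that no blocks meet before time $q_L$ (the paper via Proposition \ref{thm:cox3.2}, you via Corollary \ref{kor:vonthm4} and a union bound over pairs) and then compare the terminal laws of the now-free walks to the uniform product measure using Proposition \ref{prop:cox2.8} with $t_L=\log\log L$. The only cosmetic differences are that you phrase the mixing step in total-variation terms where the paper writes out the product sums over terminal positions explicitly, and the paper additionally couples the two starting configurations through common driving walks, which your separate uniform bounds make unnecessary.
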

\begin{proof}
	Let $m \defeq \#\pi_0$ and $\pi^{\ell,L} =\{(B_1,\zeta_1), \dotsc, (B_m, \zeta_m), \dots \}.$
	We can couple the coalescing random walks started in  $\pi^{\ell,L}$ and $\pi^{\sigma, \ell,L}$ in a natural way by using the same motions for the corresponding blocks that start with the same labels.
	Thus, all times $\tilde{\tau}_k^L=\tilde{\tau}_{c,k}^L$ are identical for $k=1, \dots, m-1.$ 
	Since $s_L^{-1} q_L \rightarrow 0$ as $L \rightarrow \infty$ we have by Proposition \ref{thm:cox3.2} that 
	\begin{align}
	\label{q<tau}
		\IP^{\pi^{\ell,L}}(q_L < \tilde{\tau}^L_{1})=\IP^{\pi^{\sigma, \ell,L}}(q_L < \tilde{\tau}^L_{1}) \konv[L] 1
	\end{align}
	uniformly over all  $\pi^{\ell,L}, \pi^{\sigma,\ell,  L} \in [[a_L, \sqrt2 L]].$ Thus, due to the boundedness of $f$ it suffices to show that claim on the event
	$\set{q_L < \tilde{\tau}^L_1< \infty}.$
	For any  $z = (z_1, \dotsc,z_m) \in \left(T^L\right)^m$ we set $\pi_{z}=((B_1, z_1), \dots, (B_m, z_m), (\emptyset, \partial),\dots)$.
	While $\set{t \leq q_L < \tau^L_1}$ the blocks perform independent random walks with transition kernel $p^L.$ 
	Thus,  we have due to (\ref{q<tau}) that
	\begin{align*}	
		\IE^{\pi^{\ell,L}}\left(f\left(\tilde{\Pi}^{\ell, L}_{q_L}\right)\right) &= 
		\sum_{z \in \left(T^L\right)^m} f(\pi_{z}) \IP^{\pi^{\ell,L}}\left(\tilde{\Pi}^{\ell,L}_{q_L} = \pi_{z} \right) + o(1)
		= \sum_{z \in \left(T^L\right)^m} f(\pi_{z}) \prod_{k=1}^m p^L_{q_L}(\zeta_k-z_k,0) + o(1),
	\end{align*}
	where the $o(1)$ term converges to $0$ uniformly for $L \rightarrow \infty.$ We also have the analogous statement for $\pi^{\sigma,\ell, L}$ with $\sigma(\zeta_k)$ instead of $\zeta_k$. 
	Hence, 
	\begin{align*}
		&\abs{\IE^{\pi^{\ell,L}}\left(f\left(\tilde{\Pi}^{\ell,L}_{q_L}\right)\right) 
		- \IE^{\pi^{\sigma, \ell,L}}\left(f\left(\tilde{\Pi}^{\ell,L}_{q_L}\right)\right)}\\
		\quad&\leq \sum_{z \in \left(T^L\right)^m} \abs{f(\pi_{z})}\abs{  \left(\prod_{k=1}^m p^L_{q_L}\bigl(\zeta_k- z_k, 0 \bigr) - \prod_{k=1}^m p^L_{q_L}\bigl(\sigma(\zeta_k)- z_k, 0 \Bigr)\right)} + o(1)\\
		\quad&\leq \sum_{z \in \left(T^L\right)^m} \abs{f(\pi_{z})  \left(\prod_{k=1}^m p^L_{q_L}\bigl(\zeta_k- z_k, 0 \bigr) \right)- 
		(2L+1)^{-2m}}\\
		\quad&\quad+ \sum_{z \in \left(T^L\right)^m} \abs{f(\pi_{z})}\abs{  \left( \prod_{k=1}^m p^L_{q_L}\bigl(\sigma(\zeta_k)- z_k, 0 \bigr) \right)- (2L+1)^{-2m}} + o(1)\\
		\quad&\leq 2 \norm{f}_\infty \sup_{z \in (T^L)^m} \abs{\left(\prod_{k=1}^m (2L+1)^{2}p_{q_L}^L(z_k,0)\right) - 1} +o(1),
	\end{align*}
	where we have used in the last inequality that there are $(2L+1)^{2m}$ terms in the sums of the previous line. 
	With Proposition \ref{prop:cox2.8} (set $t_L \defeq \log\log L$)we have $(2L+1)^2 p_{q_L}^L(z,0) \to 1$ uniformly in $z$ and so the claim now follows since the right hand side converges uniformly over all $\pi^{\ell,L}$ and $\pi^{\sigma,\ell,L}.$ 
\end{proof}

	We now introduce some notation for the waiting time between meeting and coalescence times of blocks. Namely, for 
	$\pi^{\ell, L} \in \Ps_n^{\ell,L}$ we  define for $k \in [\#\pi^L-1]$ the \emph{waiting times}
	\begin{equation} \label{def:wartezeiten}
	\sigma^L_k \defeq \tau^L_k - \tau^L_{k-1}, \quad \sigma^L_{c,k} \defeq \tau^L_{c,k} - \tau^L_{c,k-1}.
	\end{equation}
	(Formally, set $\infty - \infty=0.$) 
	Analogously we define the waiting times $\tilde{\sigma}^L_k$ of $\tilde{\Pi}^{\ell,L}$.
	\label{def:kingmanspruenge}
	For $\pi_0 \in \Ps_n$ let $\bigl(K^{\pi_0}_t\bigr)_{t \in \R_+}$ be the non-spatial Kingman coalescent started in   $\pi_0$ and 
	 let $\tau_{K,k}$ be its $k$-th coalescence time for $k \in [\#\pi_0-1].$ 
	We set $\tau_{K,0} \defeq 0$ and define for $k \in [\#\pi - 1]$ the waiting times
	\begin{equation}
	U_k \defeq \tau_{K,k} - \tau_{K,k-1}.
	\end{equation}
	Note that the family of random variables $\set{U_k}_{ k \in [\#\pi_0-1]}$ is independent, and that
	\[U_k \sim \textup{Exp}\left(\binom{\#\pi_0 - (k-1)}{2}\right).\]
	Note also that they are independent of $\{K^{\pi_0}_{\tau_{K,k}}\}_{ k \in [\#\pi_0-1]}$
	as well as that $K^{\pi_0}_{\tau_{K,k}}$ results from $K^{\pi_0}_{ \tau_{K,k-1}}$ by coalescence of two blocks chosen at random.

We will now be able to prove a result about the  asymptotic behavior of the coalescence times and the types of transitions for coalescing random walks. Note that the result refers only to the partition structure $\tilde{\Pi}^{L}$
of the coalescing random walk and not to the labeled partitions $\tilde{\Pi}^{\ell,L}.$
\begin{thm}
	\label{thm:konvspruenge}
	Let $\pi_0 \in \Ps_n$ with $m:=\#\pi_0 \geq 2$.
	We consider for every  $L \in \N$ large enough a labeled partition $\pi^{\ell,L} \in [[a_L , \sqrt2 L]]$ with partition structure $\pi^{L}=\pi_0$.
	We then obtain convergence in distribution for $L \to \infty,$
	\begin{align*}
		&\left(\frac{\tilde{\sigma}^L_1}{s_L}, \tilde{\Pi}^{L}_{\tilde{\tau}^L_1}, \dotsc, \frac{\tilde{\sigma}^L_{m - 1}}{s_L}, \tilde{\Pi}^{L}_{\tilde{\tau}^L_{m-1}}\right) 
		\konvD 
		\bigl(\boldsymbol{\pi} U_1,K^{\pi_0}_{\tau_{K,1}}, \dotsc, \boldsymbol{\pi} U_{m - 1}, K^{\pi_0}_{\tau_{K,m - 1 }}\bigr).
	\end{align*}
\end{thm}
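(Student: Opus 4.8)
The plan is to prove the joint convergence by induction on the number of coalescence events, restarting the coalescing random walk after each merger via the strong Markov property. At each step two things must be controlled simultaneously: the rescaled waiting time $\tilde{\sigma}^L_k/s_L$ and the identity of the pair of blocks that merges, and one must show that these match the Kingman dynamics jointly and asymptotically independently across steps.

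First, for the base case $k=1$. The waiting time $\tilde{\sigma}^L_1/s_L = \tilde{\tau}^L_1/s_L$ converges in distribution to $\boldsymbol{\pi} U_1$ directly from Proposition \ref{thm:cox3.2}, uniformly over all starting configurations in $[[a_L, \sqrt{2}L]]$. For the partition structure $\tilde{\Pi}^L_{\tilde{\tau}^L_1}$ I would show that the first merging pair is asymptotically uniform over all $\binom{m}{2}$ pairs, since the Kingman coalescent merges a pair chosen uniformly at random. This is where asymptotic exchangeability enters: at the intermediate time $q_L = (\log\log L)(2L+1)^2$, which satisfies $s_L^{-1} q_L \to 0$, no merger has occurred with high probability (by Proposition \ref{thm:cox3.2}), while by Lemma \ref{lem:raeumlaust} the law of the configuration at $q_L$ is invariant under permuting the block labels up to an error $\delta_L \to 0$. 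Restarting at $q_L$ with the strong Markov property, the subsequent identity of the first merging pair is therefore invariant under relabeling in the limit, hence uniform; and because the limiting waiting-time law supplied by Proposition \ref{thm:cox3.2} does not depend on which pair is singled out, the pair and the time decouple in the limit.

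Next, the inductive step. Suppose the first $k$ waiting times and intermediate partitions have been shown to converge jointly to $(\boldsymbol{\pi} U_1,K^{\pi_0}_{\tau_{K,1}}, \dotsc, \boldsymbol{\pi} U_k, K^{\pi_0}_{\tau_{K,k}})$. The crucial structural input is that immediately after the $k$-th merger the surviving blocks are again pairwise far apart: this is precisely inequality (\ref{enum:Bistwahrscheinlich2}) of Proposition \ref{prop:Bistwahrscheinlich}, which guarantees $\tilde{\Pi}^{\ell,L}_{\tilde{\tau}^L_k} \in [[a_L, \sqrt{2}L]]$ with probability tending to one, and which itself rests on Lemma \ref{lem:cox3.7und3.8} ensuring that uninvolved blocks stay far from the merge site and from one another. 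On this event I would restart the process at $\tilde{\tau}^L_k$ by the strong Markov property; the restarted configuration has $m-k$ blocks, all pairwise in $[[a_L,\sqrt2 L]]$, so the base-case argument applies verbatim to produce the $(k+1)$-th waiting time and a uniformly chosen merging pair. Since every estimate in the base case is uniform over starting configurations in $[[a_L,\sqrt2 L]]$, the random (far-apart) restart point causes no difficulty, and because each waiting time is of order $s_L$ while its limit law is independent of the starting configuration, the new increment is asymptotically independent of the past, which is exactly the mutual independence of the $U_j$ and of the successive uniform pair choices in the Kingman coalescent.

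I expect the main obstacle to be the rigorous passage from the fixed-time exchangeability of Lemma \ref{lem:raeumlaust} to the uniformity of the merging pair, jointly with and asymptotically independently of the exponential waiting time. One must argue that conditioning on the event ``no merger before $q_L$'' does not bias the configuration (negligible by Proposition \ref{thm:cox3.2}), that the first-merging pair is a measurable functional to which the relabeling symmetry can be transferred through the strong Markov restart at $q_L$, and that the joint law of (pair, rescaled time) factorizes in the limit into uniform $\otimes$ exponential. Everything else is bookkeeping: assembling the per-step convergences into the stated joint convergence of the full vector and invoking the uniformity of all estimates to absorb the random restart points.
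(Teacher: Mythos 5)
Your proposal is correct and takes essentially the same route as the paper's proof: induction over coalescence events with a strong Markov restart at each merger, Proposition \ref{prop:Bistwahrscheinlich} to guarantee the restarted configuration lies again in $[[a_L,\sqrt2 L]]$, the mixing time $q_L=(\log\log L)(2L+1)^2$ together with Lemma \ref{lem:raeumlaust} to transfer label-exchangeability to the identity of the first merging pair (the paper implements exactly the step you flag as the main obstacle by averaging over the pair-swap permutations $\sigma_{l,m}$, applying Lemma \ref{lem:raeumlaust} to the bounded measurable functional $f(\pi)=\IP^{\pi}(\tilde{\tau}^L_1=\tilde{\tau}^L(i,j)>s_Lt_k-q_L)$, and using that the events $\{\tilde{\tau}^L(l,m)=\tilde{\tau}^L_1\}$ partition the probability space), and Proposition \ref{thm:cox3.2} for the exponential limit of the rescaled waiting times. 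The paper merely organizes the induction through the joint events $A_k^L$ and a recursion for $\IP^{\pi^{\ell,L}}(A_k^L)$ rather than literally re-running the base case, but this is the same argument you outline.
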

\begin{proof}
	The proof will follow along the lines of the analogous result in Limic and Sturm (2006)\nocite{LS06}  for $(U_1,\dotsc, U_{m-1})$ in dimension $d \geq 3$ and also use the results of Cox (1989)\nocite{jC89} for coalescing random walks. 
	We first show that there is a sequence $(\epsilon_L)_{L \in \N}$ with $\epsilon_L \konv[L] 0$ such that for all $k \in [m-1]$ 
		and $\pi^{\ell,L} \in [[a_L, \sqrt2 L]]$  as well as for all $u \geq 0,$ 
	\begin{equation}
		\label{eq:konvU}
		\IE^{\pi^{\ell,L}}\left( \abs{\IP^{\tilde{\Pi}^{\ell,L}_{\tilde{\tau}^L_{k}}}\left(\tilde{\tau}^L_1 >s_L  u \right) - \exp\left(- \pi \binom{\#\pi - k}{2} u \right)}\right) < \epsilon_L.
	\end{equation}
	Note that since no random walk jumps happen at the same time and since coalescence is instantaneous, no more than two blocks may meet and coalesce, so that $\#\tilde{\Pi}^L_{\tilde{\tau}^L_{k}} = \#\pi-k$ for all $k \in [m-1].$ For those $k$ we also set
	\[D_k^{L} \defeq \set{\tilde{\Pi}^{\ell,L}_{\tilde{\tau}^L_{k}} \in [[a_L,\sqrt2 L]]}.\] 
	Then, for all $u \geq 0$ we obtain 	\begin{eqnarray}
	\nonumber
		&&\IE^{\pi^{\ell,L}}\left( \abs{\IP^{\tilde{\Pi}^{\ell,L}_{\tilde{\tau}^L_{k}}}\left(\tilde{\tau}^L_1 >s_L  u \right) - \exp\left(- \pi \binom{\#\pi - k}{2} u \right)}\right)\\
		\label{EPPest}
			&\leq& \IE^{\pi^{\ell,L}}\left( 1_{D_k^L} \abs{\IP^{\tilde{\Pi}^{\ell,L}_{\tilde{\tau}^L_{k}}}\left(\tilde{\tau}^L_1 >s_L  u \right) - \exp\left(- \pi \binom{\#\pi - k}{2} u \right)}\right)
					+ 2\IP^{\pi^{\ell,L}}\left( (D_k^L)^C \right).
	\end{eqnarray}
	Proposition \ref{thm:cox3.2} and Lebesgue's dominated convergence theorem imply that the first term on the right hand side converges to zero uniformly in $\pi^{\ell,L}$ and $u.$ 
	The second  term on the right hand side of (\ref{EPPest})  converges to $0$ uniformly in $\pi^{\ell,L}$ and $u$ due Proposition \ref{prop:Bistwahrscheinlich} (see (\ref{enum:Bistwahrscheinlich2})
	  in the proof).This shows (\ref{eq:konvU}).

	We will now show convergence in distribution.	For
	$t_1, \dotsc, t_{m - 1} \geq 0, \pi_{1}, \dotsc,\pi_{m - 1} \in \Ps_n$ 
	and $k \in \left[m - 1\right]$  we define the event 
	\[A_k^L \defeq \set{\frac{\tilde{\sigma}^L_k}{s_L} > t_{k}, \tilde{\Pi}^{L}_{\tilde{\tau}^L_k} = \pi_k, \dotsc, \frac{\tilde{\sigma}^L_1}{s_L}> t_1, \tilde{\Pi}^{L}_{\tilde{\tau}^L_1} = \pi_1}.\]
	 Also set $A_0^L:=\Omega.$ 
	 Since only binary mergers are possible for coalescing random walks as well as the Kingman coalescent, 
	 it suffices to consider $\pi_k$ chosen such that $\pi_k$ results from $\pi_{k-1}$ through a coalescence of exactly two blocks
	for all $k \in [m-1].$ 
		We define
	\[C_k \defeq \set{(\min B_1, \min B_2) \Mid B_1, B_2 \text{ are blocks of } \pi_k, \min B_1< \min B_2}.\]
	Thus, we have
	$|C_k| = \binom{\#\pi_{k}}{2}=\binom{m-k}{2}.$ 
	Let $(i,j) \in C_{k-1}\setminus C_k$ meaning that on the event $A_k^L$ at time $\tilde{\tau}_k^L$ the blocks containing $i$ and $j$ in the partition $\pi_{k-1}$ merge to form the partition $\pi_k.$
	Thus, on the event $A^L_{k-1}$ we have $\{ \tilde{\Pi}^{L}_{\tilde{\tau}^L_k} = \pi_k\} = \{\tilde{\tau}^L_k= \tilde{\tau}^L(i,j)\}.$
	We set $q_L \defeq (\log\log L )(2L+1)^2$ for all $L \in \N$ and assume that we consider an $L$ large enough so that $q_L< s_L t_k$ for $k \in [m-1].$
Let $\tilde{\F}^L_{t}$ be the $\sigma$-algebra generated by $\tilde{\Pi}^{\ell,L}$ up to time $t.$ Then  $A^L_{k-1}$ is $\tilde{\F}_{\tilde{\tau}^L_{k-1}}$ measurable 
and we have that 
	\begin{align}
	\nonumber
		\IP^{\pi^{\ell,L}}(A_k^L) &= \IE^{\pi^{\ell,L}}\bigl(\IE^{\pi^{\ell,L}}(1_{\{ \tilde{\sigma}^L_k > s_L t_k, \tilde{\Pi}^{L}_{\tilde{\tau}^{L}_k} = \pi_k\}} \cdot 1_{A^L_{k-1}} | \tilde{\F}^L_{\tilde{\tau}^L_{k-1}})\bigr)\\
		\nonumber
			&= \IE^{\pi^{\ell,L}}\left(\IP^{\pi^{\ell,L}}\left(\tilde{\sigma}^L_k  >  s_L t_k,\tilde{\tau}^L_k= \tilde{\tau}^L(i,j) \middle| \tilde{\F}^L_{\tilde{\tau}^L_{k-1}}\right) 1_{A^L_{k-1}}\right)\\
			\label{PA_k^L}
			&= \IE^{\pi^{\ell,L}}\left(\IP^{ \tilde{\Pi}^{\ell,L}_{\tilde{\tau}^L_{k-1}}}\left(\tilde{\tau}^L_1 = \tilde{\tau}^L(i,j)>s_L  t_k \right) 1_{A^L_{k-1}}\right).
	\end{align}
By using that $\{\tilde{\tau}^L_1 >  s_L t_k\}=\{\tilde{\tau}^L_1 > q_L, \tilde{\tau}^L_1 -q_L> s_L t_k-q_L\}$ we obtain from conditioning on the information up to time $q_L$ that 
\begin{eqnarray}
\label{equalEf}
\IP^{ \tilde{\Pi}^{\ell,L}_{\tilde{\tau}^L_{k-1}}}\left( \tilde{\tau}^L_1 = \tilde{\tau}^L(i,j) > s_L t_k \right) 
= \IE^{ \tilde{\Pi}^{\ell, L}_{\tilde{\tau}^L_{k-1}}}\left(f\left(\tilde{\Pi}^{\ell,L}_{q_L}\right)  1_{\{\tilde{\tau}^L_1 > q_L \}}\right),
\end{eqnarray}
	where the function $f$ is defined by 
	\[f \colon \Ps^{\ell,L}_n \to \R, \quad \pi \mapsto \IP^\pi\left(\tilde{\tau}^L_1= \tilde{\tau}^L(i,j) > s_L t_k-q_L \right). \]
	Note that $f$ is measurable and bounded. For $(l,m) \in C_{k-1}$ let $\sigma_{l,m}$ be the permutation of $[\#\pi_{k-1}]$ that only swaps $i$ with $l$ and also $j$ with $m.$ Let  $\pi^{\sigma_{l,m},\ell,L}$ be the partition obtained from $\pi^{\ell,L}$ by permuting the labels with $\sigma^{l,m}$ (as in 	Lemma \ref{lem:raeumlaust}). We also set $\hat{A}^L_{k} \defeq A^L_{k} \cap D^L_{k}.$ Then, due to (\ref{PA_k^L}) and (\ref{equalEf}),
	\begin{align*}
		&\left| \right. \IP^{\pi^{\ell,L}}(A_k^L) -  \IE^{\pi^{\ell,L}}\left(\sum_{(l,m) \in C_{k-1}} \frac1{|C_{k-1}|} \IE^{ \tilde{\Pi}^{\sigma_{l,m},\ell,L}_{\tilde{\tau}^L_{k-1}}}\left(   f\left(\tilde{\Pi}^{\ell,L}_{q_L}\right) 1_{\{\tilde{\tau}^L_1 > q_L \}}\right) 1_{A^L_{k-1}}\right)\left. \right|\\
		\leq & \IE^{\pi^{\ell,L}}\left(\sum_{(l,m) \in C_{k-1}} \frac1{|C_{k-1}|}\left| \IE^{\tilde{\Pi}^{\ell,L}_{\tilde{\tau}^L_{k-1}}}\Bigl(  f\bigl(\tilde{\Pi}^{\ell, L}_{ q_L}\bigr)  1_{\{\tilde{\tau}^L_1 > q_L \}}\Bigr)
		 - \IE^{\tilde{\Pi}^{\sigma_{l,m},\ell,L}_{\tilde{\tau}^L_{k-1}}}\Bigl( f\bigl(\tilde{\Pi}^{\ell,L}_{q_L}\bigr) 1_{\{\tilde{\tau}^L_1 > q_L \}} \Bigr)\right| 1_{\hat{A}^L_{k-1}}\right)
		   +2 \IP^{\pi^{\ell,L}}((D^L_{k-1})^C).
	\end{align*}
	Since due to (\ref{eq:konvU}) the probability for the event $\{\tilde{\tau}^L_1 > q_L \}$ is arbitrarily close to $1$ for large $L,$ we have that the first term on the right hand side 
	converges to $0$ as $L \rightarrow \infty$ because of Lemma \ref{lem:raeumlaust} and Lebesgue's dominated convergence theorem. 
	The second  term converges to $0$ due to Proposition \ref{prop:Bistwahrscheinlich}. 
		Thus, we obtain
	\begin{equation*}
	\IP^{\pi^{\ell,L}}(A_k^L) = |C_{k-1}|^{-1} \IE^{\pi^{\ell,L}}\left(\sum_{(l,m) \in C_{k-1}} 
		\IE^{\tilde{\Pi}^{\sigma_{l,m},\ell,L}_{\tilde{\tau}^L_{k-1}}}\left(1_{\{\tilde{\tau}^L_1=\tilde{\tau}^L(i,j)>s_L t_k   \}}\right) 1_{A^L_{k-1}}\right) + o(1).
	\end{equation*}	
Note that for the process started in $\tilde{\Pi}^{\sigma_{l,m},\ell,L}_{\tilde{\tau}^L_{k-1}}$ the event $\{\tilde{\tau}^L_1=\tilde{\tau}^L(i,j)>s_L t_k   \}$ is the same as the event 
$\{\tilde{\tau}^L_1=\tilde{\tau}^L(l,m)>s_L t_k   \}$ for the process started in $\tilde{\Pi}^{\ell,L}_{\tilde{\tau}^L_{k-1}}.$ Since the events
 $\set{ \tilde{\tau}^L(l,m) = \tilde{\tau}^L_1, (l,m) \in C_{k-1}}$ are a partition of the probability space 
it then follows that 
		\begin{align*}
		\IP^{\pi^{\ell,L}}(A_k^L) &= |C_{k-1}|^{-1} \IE^{\pi^{\ell,L}}\left(\IP^{\tilde{\Pi}^{\ell,L}_{\tilde{\tau}^L_{k-1}}}\left(\tilde{\tau}^L_1>s_L  t_k \right) 1_{A^L_{k-1}}\right) + o(1)\\
			&= |C_{k-1}|^{-1} \IE^{\pi^{\ell,L}}\left(\left(\IP^{\tilde{\Pi}^{\ell,L}_{\tilde{\tau}^L_{k-1}}}\left(\tilde{\tau}^L_1>s_L t_k \right) - \exp\left(- \boldsymbol{\pi} \binom{m - (k-1)}{2}  t_k\right) \right) 1_{A^L_{k-1}}\right)\\
				&+  |C_{k-1}|^{-1} \exp\left(- \boldsymbol{\pi} \binom{m - (k-1)}{2} t_k\right)\IP^{\pi^{\ell,L}}(A^L_{k-1}) + o(1).
	\end{align*}
	It now follows from (\ref{eq:konvU}) that
	\[\limty[L]\IP^{\pi^{\ell,L}}(A_k^L) =  \binom{m - (k-1)}{2}^{-1} \exp\left(- \boldsymbol{\pi} \binom{m - (k-1)}{2} t_k \right) \limty[L]\IP^{\pi^{\ell,L}}(A^L_{k-1}). \]
	By induction we then have that
	\begin{align*}
		\limty[L] \IP^{\pi^{\ell,L}}(A_k^L) &= \limty[L] 
		\prod_{i = 1}^k  \binom{m-(i-1)}{2}^{-1} \exp\left(- \boldsymbol{\pi} \binom{m - (i-1)}{2} t_i\right) \\
		&= \prod_{i = 1}^k \IP( K^{\pi_0}_{\tau_{K,i}} = \pi_i | K^{\pi_0}_{\tau_{K,i-1}} = \pi_{i-1} ) \cdot \IP(\boldsymbol{\pi} U_i > t_i). 
	\end{align*}
Since this is the desired quantity this finishes the proof of convergence in distribution.
\end{proof}

We now  formulate the corresponding result for  $\Lambda$-$n$-coalescents.
\begin{cor}	
	\label{kor:konvspruengec}
	Let $\pi_0 \in \Ps_n$ with $m=\#\pi_0.$
	We consider for  $L \in \N$ large enough a labeled partition $\pi^{\ell,L} \in [[a_L , \sqrt2 L]]$ with corresponding partition structure $\pi^L =\pi_0$.
	Then there is  convergence in distribution for $L \to \infty,$
	\begin{align*}
		\left(\frac{\sigma_{c,1}^L}{s_L}, \Pi^L_{\tau_{c,1}^L}, \dotsc, \frac{\sigma_{c,m - 1}^L}{s_L}, \Pi^L_{\tau_{c,m-1}^L}\right) 
		\konvD \bigl(\boldsymbol{\pi} U_1,K^{\pi_0}_{\tau_{K,1}}, \dotsc, \boldsymbol{\pi} U_{m - 1}, K^{\pi_0}_{\tau_{K,m - 1 }}\bigr).
	\end{align*}
\end{cor}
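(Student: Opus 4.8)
The plan is to deduce this from the corresponding statement for the coalescing random walk, Theorem \ref{thm:konvspruenge}, by a converging-together (Slutsky-type) argument that exploits the fact that $\Pi^{\ell,L}$ and $\tilde{\Pi}^{\ell,L}$ are coupled on one probability space. Write
\[X_L := \left(\frac{\sigma_{c,1}^L}{s_L}, \Pi^L_{\tau_{c,1}^L}, \dotsc, \frac{\sigma_{c,m-1}^L}{s_L}, \Pi^L_{\tau_{c,m-1}^L}\right)\]
and let $Y_L$ denote the analogous vector for $\tilde{\Pi}^{\ell,L}$, built from $\tilde{\sigma}_k^L$ and $\tilde{\Pi}^L_{\tilde{\tau}_k^L}$. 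Both take values in $(\R_+\times\Ps_n)^{m-1}$, which I equip with the product metric $\rho$ that combines the Euclidean distance on the time coordinates with the discrete metric (zero if equal, one otherwise) on the finite partition coordinates; this is a separable metric space, so the converging-together lemma applies. Theorem \ref{thm:konvspruenge} gives $Y_L \konvD Y$, where $Y=(\boldsymbol{\pi} U_1, K^{\pi_0}_{\tau_{K,1}}, \dotsc, \boldsymbol{\pi} U_{m-1}, K^{\pi_0}_{\tau_{K,m-1}})$ is precisely the right-hand side of the corollary. Since $Y_L \konvD Y$ together with $\rho(X_L,Y_L)\konvP 0$ implies $X_L \konvD Y$, it suffices to establish that $X_L$ and $Y_L$ converge together.

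To prove $\rho(X_L,Y_L)\konvP 0$, I would fix $\epsilon>0$ and apply Proposition \ref{prop:Bistwahrscheinlich} to obtain $q=q(\epsilon)>0$ and $L_0$ with $\IP^{\pi^{\ell,L}}(B^L(q,\pi^{\ell,L}))\geq 1-\epsilon$ for all $L\geq L_0$, uniformly over $\pi^{\ell,L}\in[[a_L,\sqrt2 L]]$. With this $q$ now fixed, Proposition \ref{thm:konvergenz} supplies, for any prescribed $\epsilon',\delta,\nu>0$, an $L_1$ such that (\ref{eq:Sprungzeit}) and (\ref{eq:Sprungart}) hold for $L\geq L_1$. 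The event in (\ref{eq:Sprungart}) forces the unlabeled partitions to coincide, $\Pi^L_{\tau_{c,k}^L}=\tilde{\Pi}^L_{\tilde{\tau}_k^L}$ for $k=1,\dotsc,m-1$, so the partition coordinates of $X_L$ and $Y_L$ agree; and the telescoping identity $\sigma_{c,k}^L-\tilde{\sigma}_k^L=(\tau_{c,k}^L-\tilde{\tau}_k^L)-(\tau_{c,k-1}^L-\tilde{\tau}_{k-1}^L)$ turns the bound in (\ref{eq:Sprungzeit}) into $s_L^{-1}\abs{\sigma_{c,k}^L-\tilde{\sigma}_k^L}<2\epsilon'$, so the time coordinates are within $2\epsilon'$. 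Hence on $B^L$, outside an event of probability at most $\delta+\nu$, one has $\rho(X_L,Y_L)<2\epsilon'$; adding $\IP^{\pi^{\ell,L}}((B^L)^C)\leq\epsilon$ from Proposition \ref{prop:Bistwahrscheinlich} shows that for any fixed $\eta>0$, choosing $\epsilon'\leq\eta/2$, the probability that $\rho(X_L,Y_L)\geq\eta$ is at most $\epsilon+\delta+\nu$ for $L$ large. As $\epsilon,\delta,\nu$ were arbitrary, this yields $\rho(X_L,Y_L)\konvP 0$, and the claim follows.

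The delicate point is the order of quantifiers in the bookkeeping of $B^L(q,\pi^{\ell,L})$: because Proposition \ref{thm:konvergenz} only controls the bad events \emph{intersected} with $B^L$, one must first fix the target accuracy $\epsilon$ and extract the matching $q(\epsilon)$ from Proposition \ref{prop:Bistwahrscheinlich} (guaranteeing $\IP(B^L)\geq 1-\epsilon$), and only afterwards invoke Proposition \ref{thm:konvergenz} with that same $q$. The uniformity over all admissible starting partitions $\pi^{\ell,L}\in[[a_L,\sqrt2 L]]$ in both propositions is essential and is exactly what absorbs the $L$-dependence of the initial configurations; in particular, no separate treatment of the random, well-separated restart points is required. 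Everything else reduces to the routine converging-together argument once these bounds are in place.
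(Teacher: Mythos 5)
Your proposal is correct and follows essentially the same route as the paper: the paper's proof also couples $\Pi^{\ell,L}$ with $\tilde{\Pi}^{\ell,L}$, deduces from Propositions \ref{thm:konvergenz} and \ref{prop:Bistwahrscheinlich} that the difference of the two vectors converges to $0$ in probability, and concludes via Theorem \ref{thm:konvspruenge} by a converging-together argument. Your write-up merely makes explicit two details the paper leaves implicit -- the order of quantifiers (fixing $q(\epsilon)$ from Proposition \ref{prop:Bistwahrscheinlich} before invoking Proposition \ref{thm:konvergenz}) and the telescoping step converting the bound on the $\tau_{c,k}^L$ into one on the $\sigma_{c,k}^L$ -- both of which are accurate.
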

\begin{proof}
	For  $L \in \N$ we set 
	\begin{align*}
		V^L &\defeq \left(\frac{\tilde{\sigma}^L_1}{s_L}, \tilde{\Pi}^L_{\tilde{\tau}^L_1}, \dotsc, \frac{\tilde{\sigma}^L_{m - 1}}{s_L}, \tilde{\Pi}^L_{\tilde{\tau}^L_{m-1}}\right),\\
		V^L_c &\defeq \left(\frac{\sigma_{c,1}^L}{s_L}, \Pi^L_{\tau_{c,1}^L}, \dotsc, \frac{\sigma_{c,m - 1}^L}{s_L}, \Pi^L_{\tau_{c,m-1}^L}\right),\\
		V_K &\defeq \bigl(\boldsymbol{\pi} U_1,K^{\pi_0}_{ \tau_{K,1}}, \dotsc, \boldsymbol{\pi} U_{m - 1}, K^{\pi_0}_{ \tau_{K,m - 1 }}\bigr).
	\end{align*}
	Due to Propositions \ref{thm:konvergenz} and \ref{prop:Bistwahrscheinlich} we have that $ V^L_c - V^L $ converges to $0$ in probability.
	From Theorem \ref{thm:konvspruenge} we obtain that $V^L \konvD V_K$. Taken together this implies that $V^L_c \konvD V_K$ as required.
\end{proof}

\label{subsec:bewresultat}
We are finally ready to prove our main result, Theorem \ref{thm:torusconv}.
\begin{proof}
	For any fixed $n$ we prove convergence in the  Skorohod space $\textup{D}(\R_+,\Ps_n),$
	\[\bigl(\Pi^L_{s_L t}\bigr)_{t \in \R_+} \konvD \bigl(K^{\pi_0}_{\boldsymbol{\pi} t}\bigr)_{t \in \R_+},\]
	by first showing relative compactness and then weak convergence of the finite dimensional distributions,
	see Theorem 3.7.8 of Ethier and Kurtz (1986)\nocite{EK}. 
	We start with the relative compactness of $\bigl((\Pi_{s_L t}^L)_{t \in \R_+}\bigr)_{L \in \N}$.
	Since $\Ps_n$ is compact it suffices to show by Theorem 3.6.3 of Ethier and Kurtz (1986)\nocite{EK} that for all $T>0$ and $\epsilon>0$ there exists a $\delta>0$ such that 
	\begin{equation}
	\label{modcontconv}
	\limsup_{L \rightarrow \infty} \IP^{\pi^{\ell,L}}(w(\Pi^L_{s_L \cdot}, \delta, T) \geq \epsilon) \leq \epsilon.
	\end{equation}
	where $w(\Pi^L_{s_L \cdot}, \delta, T)$ is the $\delta$-modulus of continuity of $\Pi^L_{s_L \cdot}$ on $[0,T]$ (see also (\ref{modcont})). 
	Namely, it is the infimum over all partitions of the form 
	$0=t_0< t_1< \cdots < t_{k-1} < T \leq t_k$ such that $t_i-t_{i-1}> \delta$ for all $1 \leq i \leq k$ of the quantity
	\[ \max_i \sup_{r,t \in [t_{i-1}, t_i)} 1\vee d_n(\Pi^L_{s_L r},\Pi^L_{s_L t}).\] 
	We set $T>0$ and $ \delta > 0$ as well as 
	\[c_1 \defeq \max \set{\boldsymbol{\pi} \binom{\#\pi_0 - (k - 1)}{2} \Mid k \in [\#\pi_0 - 1]}.\]
	Due to Corollary \ref{kor:konvspruengec} there is an $L(\delta)$ such that for all $L \geq L(\delta)$ and all $k \in [\#\pi_0 - 1]$ we have
	\begin{align*}
		\IP^{\pi^{\ell,L}}\left(s_L^{-1} \sigma_{c,k}^L < 2 \delta\text{ for one } k \in [\#\pi_0 - 1]\right) 
			< &\delta + \IP^{\pi_0}\bigl( \boldsymbol{\pi} U_k \leq 2\delta \text{ for one } k \in [\#\pi_0 -1]\bigr) \\
			\leq &\delta + (\#\pi_0 - 1) (1 -\exp\left(- 2 c_1\delta\right) ) 
			\leq  (1 + 2c_1 (\#\pi_0 - 1)) \delta.
	\end{align*}
	Since  $(\Pi_{s_L t}^L)_{t \in \R_+}$ is constant on the intervals $[s_L^{-1} \tau_{c,k-1}^L, s_L^{-1} \tau_{c,k}^L)$ we have on the event  
	that $s_L^{-1} \sigma_{c,k}^L > 2 \delta$ for all $k \in [\#\pi_0 -1]$ that $w(\Pi^L_{s_L \cdot}, \delta, T)=0.$
	Hence, (\ref{modcontconv}) and so the relative compactness follows by choosing $\delta=\epsilon (1 + 2c_1 (\#\pi_0 - 1))^{-1}.$

	Lastly, we show the weak convergence of the finite dimensional distributions.
	Let $l \in \N$ and $t_1,\dotsc,t_l \in \R_+.$
	By definition 
	\begin{equation}
		\label{eq:resultatbeweis}
		\Pi_{s_L t}^L = \sum_{k = 1}^{\#\pi_0 - 1} 1_{\{s_L t \in [\tau^L_{c,k-1}, \tau^L_{c,k})\}} \Pi_{\tau^L_{c,k-1}}^L.
	\end{equation}
	We define for  $k \in [l]$ the function $f_k \colon (\R_+ \times \Ps_n)^{\#\pi_0 - 1} \to \Ps_n$ by
	\begin{align*}
				(r_1,\pi_1,\dotsc,r_{\#\pi_0 -1}, \pi_{\#\pi_0 -1}) \mapsto 
			\begin{cases} 
				\pi_0 &\text{, } t_k \in [0,r_1)\\ 
				\pi_i &\text{, } t_k \in \left[\sum_{j = 1}^{i} r_j, \sum_{j = 1}^{i+1} r_j\right), i < \#\pi_0-1\\ 
				\pi_{\#\pi_0-1} &\text{, } t_k \in \left[\sum_{j = 1}^{\#\pi_0 - 1} r_j , \infty\right). 
			 \end{cases}
	\end{align*}
	We define the function
	\[f \defeq (f_1,\dotsc,f_l) \colon (\R_+ \times \Ps_n)^{\#\pi_0 - 1} \to \bigl(\Ps_n\bigr)^l. \]
	Let $V^L_c$ and $V_K$ be defined as in the proof of  Corollary \ref{kor:konvspruengec}.
	Due to  (\ref{eq:resultatbeweis}) we have
	\[f\left(V^L_c\right) = \bigl(\Pi^L_{s_L t_1}, \dotsc, \Pi^L_{s_L t_l}\bigr),\]
	and likewise
	\[f\bigl(V_K\bigr) = \bigl(K^{\pi_0}_{\boldsymbol{\pi} t_1}, \dotsc, K^{\pi_0}_{\boldsymbol{\pi} t_l}\bigr). \]
	Since  $U_k$ are continuous random variables we note that the event that $V_K$ takes values in the discontinuity set of $f$ has probability $0.$ 	Thus, due to $V^L_c \konvD V_K$ from Corollary \ref{kor:konvspruengec} we obtain
	\[\bigl(\Pi^L_{s_L t_1}, \dotsc, \Pi^L_{s_L t_l}\bigr) = f(V^L_c) \konvD f(V_K) = \bigl(K^{\boldsymbol{\pi}_0}_{\pi t_1}, \dotsc, K^{\pi_0}_{\boldsymbol{\pi} t_l}\bigr).\]
	This finishes the proof.
\end{proof}





\bibliographystyle{model2-names} 
\bibliography{../references}







\end{document}